\newcommand\abs[1]{\ensuremath{\lvert #1\rvert}}
\newtheorem{theorem}{Theorem}[section]
\newtheorem{lemma}[theorem]{Lemma}
\newtheorem{corollary}[theorem]{Corollary}
\newtheorem{proposition}[theorem]{Proposition}
\newtheorem{claim}{Claim}
\newtheorem*{claim*}{Claim}
\newtheorem{question}{Question}
\newtheorem{remark}[theorem]{Remark}
\theoremstyle{definition}
\newcommand\lcm{\operatorname{lcm}}
\newcommand\cP{\mathcal{P}}
\newcommand\cT{\mathcal{T}}
\newcommand{\gen}[1]{\ensuremath{\langle #1\rangle}}
\newcommand{\branch}{\ensuremath{V_{\neq 2}}}
\newenvironment{proofofclaim}[1][Proof.]{%
    \begin{proof}[{#1}]%
        }{%
    \end{proof}}
\begin{document}
\title[Half-integral Erd\H{o}s-P\'{o}sa for cycles in graphs labelled by multiple abelian groups]%
{A unified half-integral Erd\H{o}s-P\'{o}sa theorem for cycles in graphs labelled by multiple abelian groups} 

\author{J.~Pascal Gollin} 
\author{Kevin Hendrey}

\address[Gollin, Hendrey, Kwon, Oum]{\small Discrete Mathematics Group, Institute~for~Basic~Science~(IBS), Daejeon,~South~Korea.}

\author{Ken-ichi Kawarabayashi}
\address[Kawarabayashi]{National Institute of Informatics, 2-1-2, Hitotsubashi, Chiyoda-ku, Tokyo, Japan}

\author{O-joung Kwon}

\address[Kwon]{\small Department of Mathematics, Incheon~National~University, Incheon,~South~Korea.}

\author{Sang-il~Oum}
\address[Oum]{\small Department of Mathematical Sciences, KAIST,  Daejeon,~South~Korea.}

\email{pascalgollin@ibs.re.kr}
\email{kevinhendrey@ibs.re.kr}
\email{k{\textunderscore}keniti@nii.ac.jp}
\email{ojoungkwon@gmail.com}
\email{sangil@ibs.re.kr}
\thanks{All authors except the third author are supported by the Institute for Basic Science (IBS-R029-C1). The third author is supported by JSPS Kakenhi Grant Number JP18H05291. The fourth author is supported by the National Research Foundation of Korea (NRF) grant funded by the Ministry of Education (No. NRF-2018R1D1A1B07050294). }
%\date{\today}
\date{February 3rd, 2021}

\begin{abstract}
    Erd\H{o}s and P\'{o}sa proved in 1965 that there is a duality between the maximum size of a packing of cycles and the minimum size of a vertex set hitting all cycles.
    Such a duality does not hold if we restrict to odd cycles. 
    However, in 1999, Reed proved an analogue for odd cycles by relaxing packing to half-integral packing.
    We prove a far-reaching generalisation of the theorem of Reed; if the edges of a graph are labelled by finitely many abelian groups, then 
    there is a duality between the maximum size of a half-integral packing of cycles whose values avoid a fixed finite set for each abelian group
    and the minimum size of a vertex set hitting all such cycles. 
    
    A multitude of natural properties of cycles can be encoded in this setting, for example cycles of length at least~$\ell$, cycles of length~$p$ modulo~$q$, cycles intersecting a prescribed set of vertices at least~$t$ times, and cycles contained in given $\mathbb{Z}_2$-homology classes in a graph embedded on a fixed surface. 
    Our main result allows us to 
    prove a duality theorem for cycles satisfying a fixed set of finitely many such properties.
\end{abstract}

\keywords{Erd\H{o}s-P\'{o}sa theorem, half-integral, group-labelled graph}

\subjclass[2020]{05C38, 05C70, 05C78, 05C25}

\maketitle

\section{Introduction}

A classical theorem of Erd\H{o}s and P\'{o}sa~\cite{ErdosP1965} states that every graph contains either~$k$ vertex-disjoint cycles 
or a vertex set of size at most~${\mathcal{O}(k \log k)}$ that hits all cycles of~$G$. 
Such a theorem does not hold if we restrict to odd cycles; Lov\'{a}sz and Schrijver~(see~\cite{Thomassen1988}) 
found a class of graphs having no two vertex-disjoint odd cycles and no small vertex set hitting all odd cycles. 

In the setting of odd cycles, Reed~\cite{Reed1999} obtained an analogue of the theorem of Erd\H{o}s and P\'{o}sa by relaxing the ``vertex-disjoint'' condition. 
A \emph{half-integral packing} is a set of subgraphs such that no vertex is contained in more than two of them. 
Reed~\cite{Reed1999} proved that there is a function~$f$ such that 
every graph has a half-integral packing of at least~$k$ odd cycles 
or has a vertex set of size at most~${f(k)}$ hitting all odd cycles. 
As an easy corollary of Reed's result, given a graph whose edges are labelled with~$\mathbb{Z}_2$, there is a half-integral packing of at least~$k$ cycles, each of non-zero total weight, 
or a vertex set of size at most~${f(k)}$ hitting all such cycles. 

Very recently, Thomas and Yoo~\cite{YooR2020} extended this result to arbitrary abelian groups: 
they showed that there is a function~$f$ such that given a graph whose edges are labelled by an abelian group, there is a half-integral packing of at least~$k$ cycles each of non-zero total weight, 
or a vertex set of size at most~${f(k)}$ hitting all such cycles. 

Kakimura and Kawarabayashi~\cite{KakimuraK2013} proved a different kind of strengthening of the theorem of Reed. 
They showed that there is a function~$f$ such that every graph 
contains a half-integral packing of~$k$ odd cycles each of which intersects a prescribed set~$S$ of vertices 
or a vertex set of size at most~${f(k)}$ hitting all such cycles. 
When~$S$ is the entire vertex set of the graph, this is equivalent to the theorem of Reed. 
This result can be encoded in the setting of graphs labelled with two abelian groups~$\mathbb{Z}$ and~$\mathbb{Z}_2$: 
in~$\mathbb{Z}$ we label each edge incident with a vertex in~$S$ with~$1$, and all other edges with~$0$, 
and in~$\mathbb{Z}_2$ we label each edge with~$1$. 
The cycles which are of non-zero total weight with respect to both of these group labellings are precisely the cycles described by Kakimura and Kawarabayashi. 
Note that, since two groups are required for the encoding, this result is not covered by the previously mentioned result of Thomas and Yoo. 

Our main theorem generalises all of these results to the setting of cycles in graphs labelled with a bounded number of abelian groups, whose values avoid a bounded number of elements of each group.
For an abelian group~${\Gamma}$ and a graph~$G$, a function~${\gamma \colon E(G) \to \Gamma}$ is called a \emph{$\Gamma$-labelling} of~$G$. 
The \emph{$\gamma$-value} of a subgraph~$H$ of~$G$ is the sum of~${\gamma(e)}$ over all edges~$e$ in~$H$. 
For an integer~$m$, we write~${[m]}$ for the set of positive integers at most~$m$.
We call a set of vertices which hits all subgraphs in a set~$\mathcal{H}$ a \emph{hitting set for~$\mathcal{H}$}.

\begin{restatable}{theorem}{mainthm}
    \label{thm:main}
    For every pair of positive integers~$m$ and~$\omega$, there is a function~${f_{m,\omega} \colon \mathbb{N} \to \mathbb{N}}$ satisfying the following property. 
    For each~${i \in [m]}$, let~$\Gamma_i$ be an abelian group, and let~$\Omega_i$ be a subset of~$\Gamma_i$. 
    Let~$G$ be a graph and for each~${i \in [m]}$, let~${\gamma_i}$ be a~${\Gamma_i}$-labelling of~$G$, and let~${\mathcal{O}}$ be the set of all cycles of~$G$ whose $\gamma_i$-value is in~${\Gamma_i \setminus \Omega_i}$ for all~${i \in [m]}$.
    If~${\abs{\Omega_i} \leq \omega}$ for all~${i \in [m]}$, 
    then for all~${k \in \mathbb{N}}$ there exists 
    either a half-integral packing of~$k$ cycles in~$\mathcal{O}$, 
    or a hitting set for~$\mathcal{O}$ of size at most~${f_{m,\omega}(k)}$. 
\end{restatable}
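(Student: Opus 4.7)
My plan is to use double induction: on~$m$, the number of groups, and on~$\omega$, the size of the forbidden sets. The base case $m=0$ (or equivalently, $\omega=0$) is immediate, since the constraints on $\gamma_i$-values are then vacuous, so $\mathcal{O}$ is the set of all cycles of~$G$ and we invoke the classical Erdős-Pósa theorem. The next case, $m=1$ with $\omega=1$ and $\Omega_1=\{0\}$, is essentially the Thomas-Yoo theorem, which I will use as the main black box.

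\textbf{Extending to $\omega>1$ for a single group.} To lift Thomas-Yoo from $\omega=1$ to general $\omega$ with $\Omega=\{\alpha_1,\ldots,\alpha_\omega\}\subseteq\Gamma$, I would work in the group-covering graph $\tilde G$ with vertex set $V(G)\times\Gamma$: a cycle of~$G$ with $\gamma$-value~$\alpha$ corresponds to a path in $\tilde G$ from $(v,0)$ to $(v,\alpha)$, so cycles in $\mathcal{O}$ become paths with endpoints in the ``allowed'' layers $V(G) \times (\Gamma\setminus\Omega)$. Iterating Thomas-Yoo on derived labellings should let me peel off one forbidden value at a time, reducing general $\omega$ to the known $\omega=1$ case.

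\textbf{Inductive step on $m$.} For $m\geq 2$, let $\mathcal{O}'$ denote the set of cycles of~$G$ whose $\gamma_i$-values avoid $\Omega_i$ for all $i<m$. For sufficiently large $N=N(k,m,\omega)$, the inductive hypothesis yields either a hitting set for $\mathcal{O}'$ of size at most $f_{m-1,\omega}(N)$ (which, since $\mathcal{O}\subseteq\mathcal{O}'$, also hits $\mathcal{O}$ and finishes the proof), or a half-integral packing $\mathcal{P}$ of $N$ cycles from $\mathcal{O}'$. In the latter case I use the data $(\gamma_m,\Omega_m)$ to extract from $\mathcal{P}$ either a sub-packing of $k$ cycles already in $\mathcal{O}$ or a small hitting set for $\mathcal{O}$. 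A pigeonhole on the $\gamma_m$-values of the cycles of $\mathcal{P}$ gives either many cycles already satisfying $\gamma_m(C)\notin\Omega_m$ (done), or a large sub-packing on which $\gamma_m$ takes a single ``bad'' value in $\Omega_m$; in this sub-case I perform surgery on pairs of cycles in the packing, exploiting half-integrality, to build new cycles in $\mathcal{O}'$ whose $\gamma_m$-value lies outside $\Omega_m$.

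\textbf{Main obstacle.} The delicate step is precisely this cycle surgery: modifying cycles of $\mathcal{P}$ to change their $\gamma_m$-value risks changing their $\gamma_i$-values for $i<m$ as well, pushing them out of $\mathcal{O}'$. I expect to overcome this by working with the product group $\Gamma_1\oplus\cdots\oplus\Gamma_m$ and its diagonal labelling, so that the set of ``bad'' cycle values is a union of finitely many cosets of large subgroups, and then invoking the extended single-group case on an auxiliary graph encoded by the packing~$\mathcal{P}$. Arranging the quantitative dependencies so that $f_{m,\omega}$ remains finite, by carefully nesting the Ramsey-type pigeonhole bound, the inductive $f_{m-1,\omega}$, and the covering-graph Thomas-Yoo bound, will be the technical heart of the argument.
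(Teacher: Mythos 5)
Your proposal takes a genuinely different route (double induction on~$m$ and~$\omega$, with Thomas--Yoo as a black box) from the paper (induction on~$k$, with tangles, walls, Wollan's $A$-path theorem, and abelian-group counting lemmas), but it has a gap that I do not see how to repair. The crux of your inductive step on~$m$ is the ``cycle surgery'': from a half-integral packing~$\mathcal{P}$ of cycles in~$\mathcal{O}'$ that all carry the same bad $\gamma_m$-value, you want to produce cycles in~$\mathcal{O}$. But a half-integral packing carries no guarantee that its cycles pairwise intersect --- they may all be pairwise vertex-disjoint, in which case no surgery of the kind you describe is even possible, and you are simply stuck with $N$ disjoint cycles of the wrong $\gamma_m$-value and no way to find cycles in~$\mathcal{O}$ or a small hitting set for them. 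Even when the cycles do intersect, rerouting one cycle through another gives you essentially no control over the $\gamma_m$-value of the result while simultaneously keeping all $\gamma_i$-values ($i<m$) outside~$\Omega_i$; the ``product group plus auxiliary graph'' remark does not address this, since the auxiliary object is never specified and the bad values form a union of cosets of \emph{large} (index-bounded) subgroups, so quotienting does not collapse them. To make value-control possible, the paper has to build the cycles bottom-up from a wall dominated by a tangle, together with carefully chosen handles whose group values can be adjusted via Lemma~\ref{lem:vectorsum}, Corollary~\ref{cor:vectorsum}, and the Crittenden--Vanden Eynden arithmetic-progression theorem; a raw half-integral packing from the inductive hypothesis simply does not carry that structure.

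A secondary concern is the proposed reduction from general~$\omega$ to~$\omega=1$ via the $\Gamma$-covering graph $\tilde G$ on $V(G)\times\Gamma$. That object is infinite whenever~$\Gamma$ is infinite (e.g.\ $\Gamma=\mathbb{Z}$, which is needed for the corollaries), cycles of~$G$ become \emph{paths} of~$\tilde G$ rather than cycles (so Thomas--Yoo, which is a theorem about cycles, does not directly apply), and a half-integral packing of paths in~$\tilde G$ does not project to a half-integral packing of cycles in~$G$, because many lifts sit over the same base vertex. The paper never reduces to the $\omega=1$ case at all; it handles arbitrary~$\omega$ uniformly by the counting arguments in Section~\ref{sec:abelian}, and it does not use Thomas--Yoo as a lemma (the external input is Wollan's theorem on $\gamma$-non-zero $A$-paths and Thomassen's proposition on $\gamma$-bipartite subwalls).
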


We point out that the function~${f_{m, \omega}}$ does not depend on the choice of groups~${\Gamma_i}$ or the subsets~$\Omega_i$. 
For a graph labelled with a single abelian group~$\Gamma$, 
Wollan~\cite{Wollan2011} showed that if~$\Gamma$ has no element of order~$2$, then there are arbitrarily many vertex-disjoint cycles of non-zero $\gamma$-value, or a hitting set of bounded size for the $\gamma$-non-zero cycles.
However, if~$\Gamma$ has an element of order~$2$, then as in the case of odd cycles, this does not hold~\cite{Thomassen1988, Reed1999}. 
Hence, even when~${m = \omega = 1}$, the half-integral condition cannot be removed in Theorem~\ref{thm:main}. 

If the number~$m$ of given groups~${\Gamma_i}$ is unbounded or the size of~${\Omega_i}$ is unbounded in Theorem~\ref{thm:main}, then such a function~$f_{m, \omega}$ does not exist, and moreover, for every integer $n\ge 2$, a ${(1/n)}$-integral analogue of the Erd\H{o}s-P\'{o}sa theorem does not hold. 
We discuss this in Section~\ref{sec:overview}. 

\medskip

We now present some corollaries which help illustrate the power of this theorem. In the following corollaries, we use the function $f_{m,\omega}$ in Theorem~\ref{thm:main}.
As already mentioned, the cycles in a graph~$G$ which intersect a prescribed set of vertices can be encoded as precisely the non-zero cycles with respect to the $\mathbb{Z}$-labelling which assigns value~$1$ to edges incident with vertices in~$S$ and~$0$ to all other edges.
If instead each edge of~$G$ is assigned the integer that is the number of its endvertices which lie in~$S$, then the cycles of total weight at least~${2t}$ are precisely the cycles which intersect the set~$S$ at least~$t$ times. 
Using multiple labellings with~$\mathbb{Z}$, we can encode the set of cycles which intersect each of a bounded number of sets at least~$t$ times each. 
Thus we obtain our first corollary.

\begin{corollary} 
   \label{cor:S_i-intersecting}
    Let~$m$ and~$t$ be positive integers.
    For each~${i \in [m]}$, let~${S_i}$ be a subset of the vertices of a graph~$G$ and~${t_i \in [t]}$, and let~$\mathcal{O}$ be the set of all cycles of~$G$ containing at least~$t_i$ vertices of~$S_i$ for all~${i \in [m]}$. 
    Then for all~${k \in \mathbb{N}}$ there exists 
    either a half-integral packing of~$k$ cycles in~$\mathcal{O}$, 
    or a hitting set for~$\mathcal{O}$ of size at most~${f_{m,t}(k)}$. 
\end{corollary}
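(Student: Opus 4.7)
The plan is to realise the family $\mathcal{O}$ as the set of cycles avoiding a small ``forbidden value set'' under a suitable collection of $\mathbb{Z}$-labellings, and then invoke Theorem~\ref{thm:main} directly.

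For each $i \in [m]$, define a $\mathbb{Z}$-labelling $\gamma_i \colon E(G) \to \mathbb{Z}$ by setting $\gamma_i(e)$ to be the number of endvertices of $e$ that lie in $S_i$ (so $\gamma_i(e) \in \{0,1,2\}$). For any cycle $C$ of $G$, each vertex of $C$ is incident with exactly two edges of $C$, and thus contributes to $\gamma_i(e)$ for precisely two edges of $C$. Summing, the $\gamma_i$-value of $C$ equals $2\,\lvert V(C) \cap S_i\rvert$. In particular the $\gamma_i$-value is always a non-negative even integer.

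Now set $\Omega_i := \{0, 2, 4, \ldots, 2(t_i - 1)\} \subseteq \mathbb{Z}$. Then $\lvert \Omega_i\rvert = t_i \leq t$, so the hypothesis of Theorem~\ref{thm:main} with $\omega = t$ is satisfied. By the computation above, a cycle $C$ has $\gamma_i$-value in $\mathbb{Z} \setminus \Omega_i$ if and only if $2\,\lvert V(C) \cap S_i\rvert \notin \{0, 2, \ldots, 2(t_i -1)\}$, equivalently $\lvert V(C) \cap S_i\rvert \geq t_i$. Hence the set of cycles described by the corollary is precisely the set $\mathcal{O}$ as defined in Theorem~\ref{thm:main} for this data.

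Applying Theorem~\ref{thm:main} with these $\Gamma_i = \mathbb{Z}$, labellings $\gamma_i$, and sets $\Omega_i$ immediately yields either a half-integral packing of $k$ cycles in $\mathcal{O}$ or a hitting set for $\mathcal{O}$ of size at most $f_{m,t}(k)$. There is no real obstacle beyond verifying the encoding; the only point that needs care is using that $\gamma_i$-values of cycles are automatically even, which is what keeps $\lvert \Omega_i\rvert$ down to $t_i$ rather than $2t_i$.
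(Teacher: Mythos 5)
Your proof is correct and matches the paper's intended argument exactly: it uses the $\mathbb{Z}$-labellings $\gamma_i(e) = \lvert e \cap S_i\rvert$, the observation that $\gamma_i(O) = 2\lvert V(O)\cap S_i\rvert$ for a cycle $O$, and the forbidden sets $\Omega_i = \{0,2,\dots,2(t_i-1)\}$ of size $t_i\le t$. You also correctly isolate the one subtlety the paper relies on implicitly, namely that evenness of $\gamma_i$-values of cycles is what lets $\lvert\Omega_i\rvert$ be $t_i$ rather than $2t_i$.
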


In fact, the construction above can be generalised, allowing us to convert a group labelling of the vertices of a graph to a group labelling of its edges. 
For an abelian group~$\Gamma$ and a graph~$G$, a \emph{$\Gamma$-vertex-labelling of~$G$} is a function~${\gamma \colon V(G) \to \Gamma}$, and the \emph{$\gamma$-value} of a subgraph~$H$ of~$G$ is the sum of~${\gamma(v)}$ over all vertices in~$H$. 
In Section~\ref{sec:overview}, we will discuss in detail how such a conversion works in general, and thus obtain the following corollary. 

\begin{restatable}{corollary}{vxlabel}
    \label{cor:vxlabelling}
    Let~$m$ and~$\omega$ be positive integers. 
    For each~${i \in [m]}$, let~$\Gamma_i$ be an abelian group and let~$\Omega_i$ be a subset of~$\Gamma_i$. 
    Let~$G$ be a graph, and for each~${i \in [m]}$, let~${\gamma_i}$ be a~${\Gamma_i}$-vertex-labelling of~$G$, and let~${\mathcal{O}}$ be the set of all cycles of~$G$ whose $\gamma_i$-value is in~${\Gamma_i \setminus \Omega_i}$ for all~${i \in [m]}$.
    If~${\abs{\Omega_i} \leq \omega}$ for all~${i \in [m]}$, 
    then for all~${k \in \mathbb{N}}$ there exists 
    either a half-integral packing of~$k$ cycles in~$\mathcal{O}$, 
    or a hitting set for~$\mathcal{O}$ of size at most~${f_{m,\omega}(k)}$.
\end{restatable}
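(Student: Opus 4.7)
The idea is to reduce Corollary~\ref{cor:vxlabelling} to Theorem~\ref{thm:main} by encoding each vertex-labelling as an edge-labelling of the \emph{same} graph~$G$. The naive attempt $\gamma_i^\ast(uv) := \gamma_i(u) + \gamma_i(v)$ gives $\gamma_i^\ast(C) = 2\gamma_i(C)$ for every cycle~$C$, because each vertex of~$C$ has degree exactly two in~$C$. When $\Gamma_i$ has 2-torsion, however, this is not enough: doubling is then non-injective, so cycles with $\gamma_i(C) \in \Omega_i$ cannot be separated from cycles whose $\gamma_i$-value lies in $\Omega_i$ plus the 2-torsion subgroup, and the set of ``bad'' cycles identified by $\gamma_i^\ast$ is strictly larger than~$\mathcal{O}$.

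To bypass this I would pass to a \emph{divisible} overgroup. Every abelian group embeds into a divisible abelian group (equivalently, an injective object in the category of abelian groups), so for each~${i \in [m]}$ choose an injective homomorphism ${\iota_i \colon \Gamma_i \hookrightarrow \widetilde\Gamma_i}$ with $\widetilde\Gamma_i$ divisible. Identifying $\Omega_i$ with $\iota_i(\Omega_i) \subseteq \widetilde\Gamma_i$, we still have ${\abs{\Omega_i} \leq \omega}$. By divisibility, for each ${v \in V(G)}$ we may choose some $h^i_v \in \widetilde\Gamma_i$ with ${2 h^i_v = \iota_i(\gamma_i(v))}$; fix such a choice. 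Define the edge-labelling $\gamma_i^\ast \colon E(G) \to \widetilde\Gamma_i$ by $\gamma_i^\ast(uv) := h^i_u + h^i_v$. Since each $v \in V(C)$ is incident to exactly two edges of a cycle~$C$,
\[
\gamma_i^\ast(C) \;=\; \sum_{uv \in E(C)} \bigl(h^i_u + h^i_v\bigr) \;=\; \sum_{v \in V(C)} 2 h^i_v \;=\; \iota_i\bigl(\gamma_i(C)\bigr).
\]
Hence $\gamma_i^\ast(C) \in \iota_i(\Omega_i)$ if and only if $\gamma_i(C) \in \Omega_i$, so the set~$\mathcal{O}$ from Corollary~\ref{cor:vxlabelling} is exactly the set of cycles of~$G$ whose $\gamma_i^\ast$-value avoids $\Omega_i$ for every ${i \in [m]}$.

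Applying Theorem~\ref{thm:main} to~$G$ with the edge-labellings $\gamma_1^\ast, \ldots, \gamma_m^\ast$, groups $\widetilde\Gamma_1, \ldots, \widetilde\Gamma_m$, and forbidden sets $\Omega_1, \ldots, \Omega_m$---and using that $f_{m,\omega}$ does not depend on the choice of groups---yields either a half-integral packing of~$k$ cycles in~$\mathcal{O}$ or a hitting set for~$\mathcal{O}$ of size at most~$f_{m,\omega}(k)$, as required. The only non-routine ingredient is the passage to a divisible overgroup to neutralise the 2-torsion obstruction; once that is in place, the reduction is immediate.
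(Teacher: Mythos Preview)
Your proof is correct. Both you and the paper reduce Corollary~\ref{cor:vxlabelling} to Theorem~\ref{thm:main} by converting each vertex-labelling to an edge-labelling on the same graph, and both face the same obstacle: the naive assignment $\gamma_i^\ast(uv)=\gamma_i(u)+\gamma_i(v)$ only recovers $2\gamma_i(C)$, which loses information in the presence of $2$-torsion. The two proofs resolve this differently.

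The paper restricts to the finitely generated subgroup $\Gamma''=\langle \Omega_i\cup\gamma_i(V(G))\rangle$, decomposes it via the structure theorem into cyclic factors, and then explicitly doubles each finite cyclic factor $\mathbb{Z}_{n}\rightsquigarrow\mathbb{Z}_{2n}$ to obtain a group $\Gamma'$ together with homomorphisms $\psi_1,\psi_2\colon\Gamma''\to\Gamma'$ satisfying $\psi_1(2g)=\psi_2(g)$ with $\psi_2$ injective; the edge-labelling $\gamma'(uv):=\psi_1(\gamma_i(u)+\gamma_i(v))$ then yields $\gamma'(C)=\psi_2(\gamma_i(C))$. Your route instead embeds $\Gamma_i$ into a divisible overgroup $\widetilde\Gamma_i$, where every element admits a halving $h^i_v$ with $2h^i_v=\iota_i(\gamma_i(v))$, and sets $\gamma_i^\ast(uv):=h^i_u+h^i_v$ directly.

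Your argument is shorter and more conceptual: it trades the explicit bookkeeping of cyclic factors for a single appeal to the injectivity of divisible abelian groups. The paper's construction is slightly more elementary in that it avoids invoking injective hulls, and it keeps the target group $\Gamma'$ finitely generated (even finite when $\Gamma''$ is finite), which could matter in other contexts but is irrelevant here since $f_{m,\omega}$ is group-independent. Either way the reduction goes through with the same bound $f_{m,\omega}(k)$.
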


Our next corollary relates to graphs labelled with a fixed finite abelian group, where we obtain a similar result for the set of cycles of any specified value. 
In particular, this shows that for every pair of positive integers~${p}$ and~${q}$, cycles of length~${p}$ modulo~${q}$ satisfy a half-integral analogue of the Erd\H{o}s-P\'{o}sa theorem. 
Dejter and Neumann-Lara~\cite{DejterN1988} showed that without the half-integral relaxation, the analogous Erd\H{o}s-P\'{o}sa type result fails for cycles of length~$p$ modulo~$q$ whenever the least common multiple~${\lcm(p,q)}$ of~$p$ and~$q$ is divisible by~$2p$ (see also~\cite{Wollan2011}). 
When this condition is not met, Gollin, Hendrey, Kwon, Oum, and Yoo~\cite{GollinHKOY21} show that the half-integral relaxation is required for an Erd\H{o}s-P\'{o}sa type result in this setting if and only if~${\lcm(p,q)/p}$ is divisible by three distinct primes. 

\begin{corollary}
    \label{cor:specific-value-hiEP}
    Let~${\Gamma}$ be a finite abelian group and let~${g \in \Gamma}$.
    Let~$G$ be a graph, and~$\gamma$ be a~${\Gamma}$-labelling of~$G$, 
    and let~$\mathcal{O}$ be the set of all cycles of~$\gamma$-value~$g$. 
    Then for all~${k \in \mathbb{N}}$ there exists 
    either a half-integral packing of~$k$ cycles in~$\mathcal{O}$, 
    or a hitting set for~$\mathcal{O}$ of size at most~${f_{1, \abs{\Gamma}-1}(k)}$. 
\end{corollary}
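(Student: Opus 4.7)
The statement is an immediate specialisation of Theorem~\ref{thm:main} to the case of a single group labelling ($m=1$). The plan is to invoke the theorem with $\Gamma_1 := \Gamma$, $\gamma_1 := \gamma$, and $\Omega_1 := \Gamma \setminus \{g\}$. With this choice, $\Gamma_1 \setminus \Omega_1 = \{g\}$, so the set of cycles produced by Theorem~\ref{thm:main} — those whose $\gamma_1$-value lies in $\Gamma_1 \setminus \Omega_1$ — is precisely the set $\mathcal{O}$ of cycles of $\gamma$-value exactly $g$.

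Since $\Gamma$ is finite, $|\Omega_1| = |\Gamma| - 1$, so we may take $\omega := |\Gamma| - 1$, and Theorem~\ref{thm:main} returns the half-integral packing of $k$ cycles in $\mathcal{O}$ or a hitting set of size at most $f_{1,|\Gamma|-1}(k)$, matching the corollary. There is no real obstacle: the only content of the argument is the observation that restricting cycles to a single prescribed group element $g$ is the same as forbidding the remaining $|\Gamma|-1$ elements, which is exactly the regime Theorem~\ref{thm:main} is designed to handle. (If $\Gamma$ is trivial or $g$ is the only element of $\Gamma$, then $\Omega_1 = \emptyset$ and $\mathcal{O}$ is the set of all cycles, in which case the conclusion also follows — indeed it reduces to the classical Erd\H{o}s-P\'{o}sa theorem, with the constant $f_{1,0}(k)$ absorbing the~$\mathcal{O}(k\log k)$ bound.)
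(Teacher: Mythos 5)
Your proof is correct and matches the paper's intent exactly: the paper does not include an explicit proof of this corollary precisely because it is the immediate specialisation of Theorem~\ref{thm:main} to $m=1$ with $\Omega_1 = \Gamma \setminus \{g\}$, which is what you wrote.
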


Our next corollary relates to graphs embedded on a fixed compact surface, where we obtain a similar result for the set of cycles contained in any given set of (first) $\mathbb{Z}_2$-homology classes. 
Huynh, Joos, and Wollan~\cite{HuynhJW2017} proved that for graphs embedded on a fixed surface, cycles not homologous to zero in the $\mathbb{Z}$-homology group satisfy a half-integral analogue of the Erd\H{o}s-P\'{o}sa theorem. 
They used a different type of graph labelling, called a directed $\Gamma$-labelling.
With (undirected) $\Gamma$-labellings, we can do the same thing with respect to $\mathbb{Z}_2$-homology classes. Since a compact surface has a finite abelian group as its $\mathbb{Z}_2$-homology group, we can obtain a half-integral analogue of the Erd\H{o}s-P\'{o}sa theorem for cycles contained in any given set of  $\mathbb{Z}_2$-homology classes. 
We discuss this further in Section~\ref{sec:overview}. 

\begin{corollary}
    \label{cor:surface}
    Let~$\Sigma$ be a compact surface with $\mathbb{Z}_2$-homology group $\Gamma$ and let~${\mathcal{C}}$ be a set of $\mathbb{Z}_2$-homology classes of~$\Sigma$. 
    Let~$G$ be a graph embedded on~$\Sigma$, and let~$\mathcal{O}$ be the set of all cycles of~$G$ 
    whose $\mathbb{Z}_2$-homology classes are contained in~$\mathcal{C}$.
    Then for all~${k \in \mathbb{N}}$ there exists 
    either a half-integral packing of~$k$ cycles in~$\mathcal{O}$, 
    or a hitting set for~$\mathcal{O}$ of size at most~${f_{1, \abs{\Gamma}-\abs{\mathcal{C}}}(k)}$. 
\end{corollary}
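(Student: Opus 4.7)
The plan is to apply Theorem~\ref{thm:main} with $m = 1$ after encoding the homology condition as a single $\Gamma$-labelling. Set $\Omega := \Gamma \setminus \mathcal{C}$, so that $\abs{\Omega} = \abs{\Gamma} - \abs{\mathcal{C}}$; a cycle then lies in $\mathcal{O}$ precisely when its label-sum lies in $\Gamma \setminus \Omega = \mathcal{C}$. Hence the task reduces to constructing a $\Gamma$-labelling $\gamma$ of $G$ such that for every cycle $C$ of $G$, the $\gamma$-value of $C$ equals the $\mathbb{Z}_2$-homology class $[C] \in \Gamma$ of $C$ viewed as a closed curve on $\Sigma$.

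To build $\gamma$, I would fix a spanning forest $T$ of $G$ and set $\gamma(e) := 0$ for each $e \in E(T)$. For each non-forest edge $e$, let $C_e$ denote the fundamental cycle of $e$ with respect to $T$, and set $\gamma(e) := [C_e]$. For an arbitrary cycle $C$ of $G$, write $F := E(C) \setminus E(T)$; since each $C_e$ uses exactly one non-forest edge (namely $e$), the edge set $E(C) \bigtriangleup \bigtriangleup_{e \in F} E(C_e)$ is contained in $E(T)$ and forms a $\mathbb{Z}_2$-$1$-cycle supported inside a forest, hence is empty. Thus $C = \bigtriangleup_{e \in F} C_e$ as $\mathbb{Z}_2$-$1$-chains on $\Sigma$, and by linearity of the map to $H_1(\Sigma;\mathbb{Z}_2)$,
\[
    [C] \;=\; \sum_{e \in F}[C_e] \;=\; \sum_{e \in E(C)} \gamma(e),
\]
which is exactly the $\gamma$-value of $C$. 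With this labelling, $\mathcal{O}$ is the set of cycles whose $\gamma$-value lies in $\Gamma \setminus \Omega$, and Theorem~\ref{thm:main} applied to $(G, \gamma, \Omega)$ with parameter $k$ delivers either a half-integral packing of $k$ cycles in $\mathcal{O}$ or a hitting set of size at most $f_{1, \abs{\Gamma} - \abs{\mathcal{C}}}(k)$.

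I do not anticipate a serious obstacle, as the substantive work is carried by Theorem~\ref{thm:main}. The only conceptual point is verifying that the fundamental-cycle labelling correctly linearises the first $\mathbb{Z}_2$-homology map on cycles, which is the standard combinatorial description of $H_1$ of a graph; compactness of $\Sigma$ is used only to ensure that $\Gamma$ is finite, so that $\abs{\Omega}$ is well-defined and bounded in terms of the data of the corollary.
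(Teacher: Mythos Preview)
Your proposal is correct and essentially identical to the paper's own argument: the paper likewise defines the labelling via a spanning tree, setting~$\gamma(e)=0$ on tree edges and~$\gamma(e)$ equal to the homology class of the fundamental cycle on non-tree edges (this is packaged as Proposition~\ref{prop:cyclespace-hom-to-gamma}), and then applies Theorem~\ref{thm:main} with~$m=1$ and~$\Omega=\Gamma\setminus\mathcal{C}$.
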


One nice feature of our main theorem is that it allows us to combine various properties of cycles together to obtain new results, 
as long as we take a bounded number of properties and can encode each of them with a bounded number of group labellings. 
Thus, we could combine any subset of these corollaries together and obtain a result of the same form. 

\medskip

Huynh, Joos, and Wollan~\cite{HuynhJW2017} obtained a result similar to our main theorem for graphs with two directed group labellings, where the value of an edge is inverted if it is traversed in the reverse direction.
They showed that a half-integral analogue of the Erd\H{o}s-P\'{o}sa theorem holds for cycles whose values are non-zero in each coordinate. 
They conjectured that their result can be extended to graphs with more than two directed labellings. 
Because the~${\Gamma}$-labellings in this paper are equivalent to directed ${\Gamma}$-labellings when all elements of~${\Gamma}$ have order~$2$, 
Theorem~\ref{thm:main} implies that the conjecture of Huynh, Joos, and Wollan hold for graphs with a fixed number of directed labellings with such groups. 
The conjecture otherwise remains open, although there is a large overlap between the motivation of the conjecture and the consequences of our main theorem. 
We discuss directed group labellings in more detail in Section~\ref{sec:conclusion}.

\medskip

The structure of this paper is as follows. 
In Section~\ref{sec:prelim}, we introduce preliminary concepts, and we give a high-level overview of the proof of our main theorem and present proofs of corollaries in Section~\ref{sec:overview}.
In Section~\ref{sec:pack}, we define a packing function and provide its application.
In Section~\ref{sec:cleanwalls}, we define a concept of a clean wall which is well-behaved for each of the labellings~$\gamma_i$. 
In Section~\ref{sec:handles}, we prove a key lemma to find many vertex-disjoint paths attached to a wall.
In Section~\ref{sec:abelian}, we prove useful lemmas on a product of abelian groups that will be used in the last step of the main theorem. 
Section~\ref{sec:handles2cycles} discusses how to obtain a desired cycle from a wall together with attached disjoint paths. 
We prove our main result in Section~\ref{sec:main}, 
and we discuss some open problems in Section~\ref{sec:conclusion}.

\section{Preliminaries}
\label{sec:prelim}

In this paper, all graphs are undirected simple graphs having no loops and multiple edges. For every abelian group, we regard its operation as an additive operation and denote its zero by~$0$.
Even though we work on simple graphs, all the results are extended to multigraphs; given a multigraph instance, we can take a subdivision to produce an equivalent simple graph with group value~$0$ on new edges. 

For an integer~$m$, we write~${[m]}$ for the set of positive integers at most~$m$. 

Let~$G$ be a graph. 
We denote by~${V(G)}$ and~${E(G)}$ the vertex set and the edge set of~$G$, respectively. 
For a vertex set~$A$ of~$G$, we denote by~${G - A}$ the graph obtained from~$G$ by deleting all the vertices in~$A$ and all edges incident with vertices in~$A$, 
and denote by~${G[A]}$ the subgraph of~$G$ induced by~$A$, which is~${G-(V(G)\setminus A)}$.
If~${A = \{v\}}$, then we write~${G - v}$ for~${G - A}$. 
For an edge~$e$ of~$G$, we denote by~${G - e}$ the graph obtained by deleting~$e$. 
For two graphs~$G$ and~$H$, let 
\[
    {G \cup H := (V(G) \cup V(H), E(G) \cup E(H))}
    \ \textnormal{ and } \ 
    {G \cap H := (V(G) \cap V(H), E(G) \cap E(H))}. 
\]
For a set~$\mathcal{G}$ of graphs, we denote by~${\bigcup \mathcal{G}}$ the union of the graphs in~$\mathcal{G}$. 

For an integer~${t}$, a graph~$G$ is \emph{$t$-connected} if it has more than~$t$ vertices and~${G-S}$ is connected for all vertex sets~$S$ with~${\abs{S}<t}$. 

\emph{Subdividing} an edge~$uv$ in a graph~$G$ is an operation that yields a graph containing one new vertex~$w$, and with an edge set replacing~$uv$ by two new edges,~$uw$ and~$wv$. 
A graph~$H$ is a \emph{subdivision} of a graph~$G$ if~$H$ can be obtained from~$G$ by subdividing edges repeatedly. 

Let~$A$ and~$B$ be vertex sets of~$G$. 
An \emph{${(A, B)}$-path} is a path from a vertex in~$A$ to a vertex in~$B$ such that all internal vertices are not contained in~${A \cup B}$. 
We also denote an~${(A,A)}$-path as an \emph{$A$-path}. 
For a subgraph~$H$ of~$G$, we shortly write as an $H$-path for a ${V(H)}$-path.
A path is \emph{$A$-intersecting} if it contains a vertex of~$A$. 

For a graph~$G$, we denote by~$\branch(G)$ the set of all vertices of~$G$ whose degrees are not equal to~$2$.
A \emph{corridor} of a graph~$G$ is a $\branch(G)$-path of length at least~$1$.

\begin{remark}
    Every graph in which no block is a cycle is the edge-disjoint union of its corridors. 
\end{remark}

For a family~${\mathcal{F} = ( x_i \colon i \in I )}$ we write $\abs{\mathcal{F}}=\abs{I}$, called the \emph{size} of~$\mathcal{F}$.

\subsection{Walls}

Let~${c,r \geq 3}$ be integers. 
The \emph{elementary $(c,r)$-wall $W_{c,r}$} is the graph obtained from the graph on the vertex set~${[2c] \times [r]}$ whose edge set is 
\begin{align*}
    \left\{ (i,j) (i+1,j) \, \colon \, i \in [2c-1],\, j \in [r] \right\} 
    \cup 
    \left\{ (i,j) (i,j+1) \, \colon \, i \in [2c],\, j \in [r-1],\, i+j \textnormal{ is odd} \right\}
\end{align*}
by deleting both degree~$1$ vertices. 

\begin{figure}[h]
    \centering
    \begin{tikzpicture}[scale=0.5, decoration={markings, mark=at position 0.5 with {\arrow{>}}}]
        \tikzstyle{w}=[circle,draw,fill=black!50,inner sep=0pt,minimum width=3pt]

        \draw[gray] (1,0)--(9,0);
        \draw[gray] (0,6)--(8,6);
        
        \foreach \y in {1,2,3,4,5}{
            \draw[gray] (0,\y)--(9,\y);
        }
        
        \foreach \x in {0, 2, 4, 6, 8}{
            \foreach \y in {1,3,5}{
                \draw[gray] (\x, \y+1)-- (\x, \y);
                \draw[gray] (\x+1, \y)-- (\x+1, \y-1);
            }
        }
		
        \foreach \x in {2, 4, 6}{
            \foreach \y in {3,5}{
                \draw[very thick] (\x, \y+1)-- (\x, \y);
                \draw[very thick] (\x+1, \y)-- (\x+1, \y-1);
                \draw[very thick] (\x, \y)-- (\x+1, \y);
                \draw[very thick] (\x+1, \y-1)-- (\x, \y-1);
            }
        }
	    
        \foreach \x in {2,4,6}{
            \foreach \y in {1}{
                \draw[very thick] (\x, \y+1)-- (\x, \y);
                \draw[very thick] (\x+1, \y)-- (\x+1, \y-1);
                \draw[very thick] (\x, \y)-- (\x+1, \y);	
                \draw (\x+1, \y-1)-- (\x, \y-1);	
            }
        }
        \foreach \x in {2,4}{
            \draw[very thick] (\x+1, 7-1)-- (\x, 7-1);	
        }
        \foreach \y in {0,1,2,3,4,5}{
        \draw[very thick] (3, \y)--(5,\y);
        \draw[very thick] (5, \y)--(7,\y);
        }
        \draw[very thick] (3, 6)--(6,6);
        \node at (2, .5) {};
        \node at (4, .5) {};
        \node at (1, 1.5) {};
        \node at (3, 1.5) {};
        \foreach \x in {0,1,...,9}{
            \foreach \y in {1,2,...,5} {
                \node at (\x,\y) [w] {};
            }
        }
        \foreach \x in {0,1,...,8}{
            \node at (\x,6) [w] {};
        }
        \foreach \x in {1,2,...,9}{
            \node at (\x,0) [w] {};
        }
    \end{tikzpicture}
    \qquad
    \begin{tikzpicture}[scale=0.5, decoration={markings, mark=at position 0.5 with {\arrow{>}}}]
        \tikzstyle{w}=[circle,draw,fill=black!50,inner sep=0pt,minimum width=3pt]
        \foreach \x in {0,...,9}{
            \ifthenelse{\x>0}{
                \pgfmathtruncatemacro{\start}{0};
            }{
                \pgfmathtruncatemacro{\start}{1};
            }
            \ifthenelse{\x<9}{
                \pgfmathtruncatemacro{\t}{6};
            }{
                \pgfmathtruncatemacro{\t}{5};
            }
            \foreach \y in {\start,...,\t} {
                \node at (\x,\y) [w] (v\x\y) {};
            }
        }        
        \foreach \x in {0,1,...,9}{
            \ifthenelse{\x>0}{
                \pgfmathtruncatemacro{\start}{0};
            }{
                \pgfmathtruncatemacro{\start}{1};
            }
            \ifthenelse{\x<9}{
                \pgfmathtruncatemacro{\t}{6};
            }{
                \pgfmathtruncatemacro{\t}{5};
            }          
            \foreach \y in {\start,...,\t} {
                \pgfmathtruncatemacro{\nextx}{\x+1}
                \ifthenelse{\x<8}{ 
                    \draw [gray](v\x\y)--(v\nextx\y);
                }{}
                \ifthenelse{\x=8 \and \y<6}{ 
                    \draw [gray](v\x\y)--(v\nextx\y);
                }{}                
                \pgfmathtruncatemacro{\nexty}{\y+1}
                \pgfmathtruncatemacro{\sum}{\x+\y}
                \ifthenelse{\isodd{\sum}\and \y<6}{
                    \draw [gray](v\x\y)--(v\x\nexty);
                }{}
            }
        }
        \foreach \x in {1,...,8}{
            \ifthenelse{\x>1}{
                \pgfmathtruncatemacro{\start}{1};
            }{
                \pgfmathtruncatemacro{\start}{2};
            }
            \ifthenelse{\x<8}{
                \pgfmathtruncatemacro{\t}{5};
            }{
                \pgfmathtruncatemacro{\t}{4};
            }          
            \foreach \y in {\start,...,\t} {
                \pgfmathtruncatemacro{\nextx}{\x+1}
                \ifthenelse{\x<7}{ 
                    \draw [very thick](v\x\y)--(v\nextx\y);
                }{}
                \ifthenelse{\x=7 \and \y<5}{ 
                    \draw [very thick](v\x\y)--(v\nextx\y);
                }{}                
                \pgfmathtruncatemacro{\nexty}{\y+1}
                \pgfmathtruncatemacro{\sum}{\x+\y}
                \ifthenelse{\isodd{\sum}\and \y<5}{
                    \draw [very thick](v\x\y)--(v\x\nexty);
                }{}
            }
        }
    \end{tikzpicture}
    \caption{An elementary ${(5, 7)}$-wall~$W$ depicted on the left with a $3$-column-slice, and depicted on the right with a ${(4,5)}$-subwall that is ${\branch(W)}$-anchored.}
    \label{fig:wall}
\end{figure}
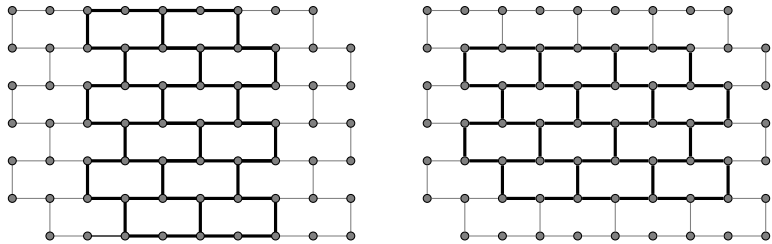

For ${j \in [r]}$, the \emph{$j$-th row}~$R_j$ of~$W_{c,r}$ is the path~${W_{c,r} \big[ \big\{ (i, j)\in V(W_{c,r}) \, \colon \, i \in [2c] \big\} \big]}$. 
For ${i \in [c]}$, the \emph{$i$-th column}~$C_i$ of~$W_{c,r}$ is the path~${W_{c,r} \big[ \big\{ (i',j)\in V(W_{c,r}) \, \colon \, i' \in \{ 2i - 1, 2i \}, \, j \in [r] \big\} \big]}$. 

A \emph{$(c,r)$-wall} is a subdivision~$W$ of the elementary $(c,r)$-wall. 
If~$W$ is a~$(c,r)$-wall for some suitable integers~$c$ and~$r$, then we say~$W$ is a \emph{wall of order~$\min\{c,r\}$}. 
We call a branch vertex corresponding to the vertex~${(i,j)}$ of the elementary wall a \emph{nail} of~$W$, and denote by~$N^W$ the set of nails of~$W$. 

\begin{remark}
    \label{rmk:wall3connected}
    Any wall is a subdivision of a $3$-connected planar graph. 
\end{remark}

For a subgraph~$H$ of the elementary wall, we denote by~$H^W$ the subgraph of~$W$ corresponding to a subdivision of~$H$.  
We call~$R_j^W$ or $C_i^W$ the \emph{$j$-th row} or \emph{$i$-th column} of~$W$, respectively. 
A subgraph~$W'$ of a wall~$W$ that is itself a wall is called a \emph{subwall of~$W$}. 
For a set~$S$ of vertices, we say a wall~$W$ is \emph{$S$-anchored} if~${N^W \subseteq S}$. 
We observe the following.

\begin{remark}
    \label{rmk:nicesubwall}
    Let~$W$ be a~$(c,r)$-wall for integers~${c, r \geq 5}$. 
    Then~$W$ contains a~$(c-1,r-2)$-subwall~$W'$ which is~$\branch(W)$-anchored. 
    See Figure~\ref{fig:wall}. 
    \qed
\end{remark}

For an integer~${c \geq 3}$, we call a subwall~$W'$ of a wall~$W$ a \emph{$c$-column-slice of~$W$} if the set of nails of~$W'$ is exactly~${N^W \cap V(W')}$,
there is a column of~$W'$ which is a column of~$W$, 
and~$W'$ has exactly~$c$ columns, see Figure~\ref{fig:wall} for an example. 
Similarly, for an integer~${r \geq 3}$, we call a subwall~$W'$ of a wall~$W$ an \emph{$r$-row-slice of~$W$} if the set of nails of~$W'$ is exactly~${N^W \cap V(W')}$,
there is a row of~$W'$ which is a row of~$W$, 
and~$W'$ has exactly~$r$ rows. 
Note that in an $r$-row-slice~$W'$ of~$W$, depending on the location, the first column of~$W'$ may be in the last column of~$W$ by the definition of a wall. 
See Figure~\ref{fig:rowslice} for an illustration.

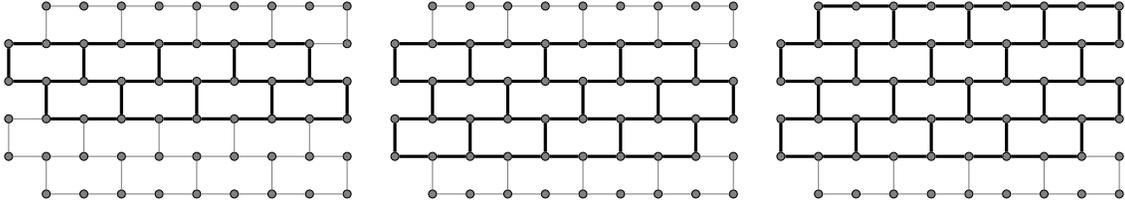
\begin{figure}[h]
    \centering
    \begin{tikzpicture}[scale=0.5, decoration={markings, mark=at position 0.5 with {\arrow{>}}}]
        \tikzstyle{w}=[circle,draw,fill=black!50,inner sep=0pt,minimum width=3pt]
        \foreach \x in {0,...,9}{
            \ifthenelse{\x>0}{
                \pgfmathtruncatemacro{\start}{0};
                \pgfmathtruncatemacro{\t}{5};
            }{
                \pgfmathtruncatemacro{\start}{1};
                \pgfmathtruncatemacro{\t}{4};
            }
            \foreach \y in {\start,...,\t} {
                \node at (\x,\y) [w] (v\x\y) {};
            }
        }        
        \foreach \x in {0,1,...,9}{
            \ifthenelse{\x>0}{
                \pgfmathtruncatemacro{\start}{0};
                \pgfmathtruncatemacro{\t}{5};
            }{
                \pgfmathtruncatemacro{\start}{1};
                \pgfmathtruncatemacro{\t}{4};
            }        
            \foreach \y in {\start,...,\t} {
                \pgfmathtruncatemacro{\nextx}{\x+1}
                \ifthenelse{\x<9}{ 
                    \draw [gray](v\x\y)--(v\nextx\y);
                }{}
                \pgfmathtruncatemacro{\nexty}{\y+1}
                \pgfmathtruncatemacro{\sum}{\x+\y}
                \ifthenelse{\isodd{\sum}\and \y<5}{
                    \draw [gray](v\x\y)--(v\x\nexty);
                }{}
            }
        }
        \foreach \x in {0,...,9}{
            \ifthenelse{\x>0}{
                \pgfmathtruncatemacro{\start}{2};
            }{
                \pgfmathtruncatemacro{\start}{3};
            }
            \ifthenelse{\x<9}{
                \pgfmathtruncatemacro{\t}{4};
            }{
                \pgfmathtruncatemacro{\t}{3};
            }
            \foreach \y in {\start,...,\t} {
                \pgfmathtruncatemacro{\nextx}{\x+1}
                \ifthenelse{\x<8 \and \x>0}{ 
                    \draw [very thick](v\x\y)--(v\nextx\y);
                }{}
                \ifthenelse{\x=8 \and \y<4}{ 
                    \draw [very thick](v\x\y)--(v\nextx\y);
                }{}   
                \ifthenelse{\x=0 \and \y>2}{ 
                    \draw [very thick](v\x\y)--(v\nextx\y);
                }{}             
                \pgfmathtruncatemacro{\nexty}{\y+1}
                \pgfmathtruncatemacro{\sum}{\x+\y}
                \ifthenelse{\isodd{\sum}\and \y<4}{
                    \draw [very thick](v\x\y)--(v\x\nexty);
                }{}
            }
        }
    \end{tikzpicture}
    \quad
    \begin{tikzpicture}[scale=0.5, decoration={markings, mark=at position 0.5 with {\arrow{>}}}]
        \tikzstyle{w}=[circle,draw,fill=black!50,inner sep=0pt,minimum width=3pt]
        \foreach \x in {0,...,9}{
            \ifthenelse{\x>0}{
                \pgfmathtruncatemacro{\start}{0};
                \pgfmathtruncatemacro{\t}{5};
            }{
                \pgfmathtruncatemacro{\start}{1};
                \pgfmathtruncatemacro{\t}{4};
            }
            \foreach \y in {\start,...,\t} {
                \node at (\x,\y) [w] (v\x\y) {};
            }
        }        
        \foreach \x in {0,1,...,9}{
            \ifthenelse{\x>0}{
                \pgfmathtruncatemacro{\start}{0};
                \pgfmathtruncatemacro{\t}{5};
            }{
                \pgfmathtruncatemacro{\start}{1};
                \pgfmathtruncatemacro{\t}{4};
            }        
            \foreach \y in {\start,...,\t} {
                \pgfmathtruncatemacro{\nextx}{\x+1}
                \ifthenelse{\x<9}{ 
                    \draw [gray](v\x\y)--(v\nextx\y);
                }{}
                \pgfmathtruncatemacro{\nexty}{\y+1}
                \pgfmathtruncatemacro{\sum}{\x+\y}
                \ifthenelse{\isodd{\sum}\and \y<5}{
                    \draw [gray](v\x\y)--(v\x\nexty);
                }{}
            }
        }
        \foreach \x in {0,...,9}{
            \ifthenelse{\x<9}{
                \pgfmathtruncatemacro{\start}{1};
                \pgfmathtruncatemacro{\t}{4};
            }{
                \pgfmathtruncatemacro{\start}{2};
                \pgfmathtruncatemacro{\t}{3};
            }
            \foreach \y in {\start,...,\t} {
                \pgfmathtruncatemacro{\nextx}{\x+1}
                \ifthenelse{\x<8}{ 
                    \draw [very thick](v\x\y)--(v\nextx\y);
                }{}
                \ifthenelse{\x=8 \and \y<4 \and \y>1}{ 
                    \draw [very thick](v\x\y)--(v\nextx\y);
                }{}                
                \pgfmathtruncatemacro{\nexty}{\y+1}
                \pgfmathtruncatemacro{\sum}{\x+\y}
                \ifthenelse{\isodd{\sum}\and \y<4}{
                    \draw [very thick](v\x\y)--(v\x\nexty);
                }{}
            }
        }
    \end{tikzpicture}
    \quad
    \begin{tikzpicture}[scale=0.5, decoration={markings, mark=at position 0.5 with {\arrow{>}}}]
        \tikzstyle{w}=[circle,draw,fill=black!50,inner sep=0pt,minimum width=3pt]
        \foreach \x in {0,...,9}{
            \ifthenelse{\x>0}{
                \pgfmathtruncatemacro{\start}{0};
                \pgfmathtruncatemacro{\t}{5};
            }{
                \pgfmathtruncatemacro{\start}{1};
                \pgfmathtruncatemacro{\t}{4};
            }
            \foreach \y in {\start,...,\t} {
                \node at (\x,\y) [w] (v\x\y) {};
            }
        }        
        \foreach \x in {0,1,...,9}{
            \ifthenelse{\x>0}{
                \pgfmathtruncatemacro{\start}{0};
                \pgfmathtruncatemacro{\t}{5};
            }{
                \pgfmathtruncatemacro{\start}{1};
                \pgfmathtruncatemacro{\t}{4};
            }        
            \foreach \y in {\start,...,\t} {
                \pgfmathtruncatemacro{\nextx}{\x+1}
                \ifthenelse{\x<9}{ 
                    \draw [gray](v\x\y)--(v\nextx\y);
                }{}
                \pgfmathtruncatemacro{\nexty}{\y+1}
                \pgfmathtruncatemacro{\sum}{\x+\y}
                \ifthenelse{\isodd{\sum}\and \y<5}{
                    \draw [gray](v\x\y)--(v\x\nexty);
                }{}
            }
        }
        \foreach \x in {0,...,9}{
            \ifthenelse{\x<9}{
                \pgfmathtruncatemacro{\start}{1};
            }{
                \pgfmathtruncatemacro{\start}{2};
            }
            \ifthenelse{\x>0}{
                \pgfmathtruncatemacro{\t}{5};
            }{
                \pgfmathtruncatemacro{\t}{4};
            }
            \foreach \y in {\start,...,\t} {
                \pgfmathtruncatemacro{\nextx}{\x+1}
                \ifthenelse{\x<8}{ 
                    \draw [very thick](v\x\y)--(v\nextx\y);
                }{}
                \ifthenelse{\x=8 \and  \y>1}{ 
                    \draw [very thick](v\x\y)--(v\nextx\y);
                }{}                
                \pgfmathtruncatemacro{\nexty}{\y+1}
                \pgfmathtruncatemacro{\sum}{\x+\y}
                \ifthenelse{\isodd{\sum}\and \y<5}{
                    \draw [very thick](v\x\y)--(v\x\nexty);
                }{}
            }
        }
    \end{tikzpicture}
    \caption{A $3$-row-slice $W_3$, a $4$-row-slice $W_4$, and a $5$-row-slice $W_5$ of a $(5,6)$-wall~$W$. Notice that the first column of $W_3$ is in the first column of $W$ but the first column of $W_4$ or $W_5$ is in the last column of $W$.}
    \label{fig:rowslice}
\end{figure}

Given a wall~$W$, a \emph{$W$-handle} is a non-trivial $W$-path whose endvertices are degree-$2$ nails of~$W$ contained in the union of the first and last column of~$W$.

Let~$W$ be a ${(c,r)}$-wall, let~$W'$ be a $c'$-column-slice of~$W$ for some~${3 \leq c' \leq c}$, 
and let~$P$ be a $W$-handle. 
We define the \emph{row-extension of~$P$ to~$W'$ in~$W$} as the unique non-trivial $W'$-path in~${P \cup \bigcup \{ R_i^W \colon i \in [r]\}}$. 
Note that such a~$P$ is a $W'$-handle. 
For a set~$\mathcal{P}$ of vertex-disjoint $W$-handles, we define the \emph{row-extension of~$\mathcal{P}$ to~$W'$ in~$W$} to be the set of row-extensions of the paths in~$\mathcal{P}$ to~$W'$ in~$W$. 
Note that these $W'$-handles are vertex-disjoint.

\subsection{Linkages, separations, and tangles}

Let~$G$ be a graph.
For vertex sets~$A$ and~$B$ in~$G$, 
a set~$\mathcal{P}$ of vertex-disjoint ${(A, B)}$-paths of~$G$ is called a \emph{linkage} from~$A$ to~$B$, 
and its \emph{order} is defined to be~$\abs{\mathcal{P}}$. 
A \emph{separation} of~$G$ is a pair~${(A, B)}$ of subsets of~$V(G)$ such that~${G[A] \cup G[B] = G}$. 
Its \emph{order} is defined to be~$\abs{A \cap B}$. 
We will use Menger's theorem. 

\begin{theorem}[Menger~\cite{Menger27}]
    \label{thm:menger}  
    Let~$A$ and~$B$ be vertex sets in a graph~$G$, and~$k$ be a positive integer. 
    Then~$G$ contains either a linkage of order~$k$ from~$A$ to~$B$, 
    or a separation~${(A', B')}$ of order less than~$k$ such that~${A \subseteq A'}$ and~${B \subseteq B'}$. 
\end{theorem}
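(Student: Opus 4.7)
The plan is to prove the theorem by induction on $\abs{E(G)}$.

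For the base case $E(G) = \emptyset$, every $(A,B)$-path must be trivial, consisting of a single vertex in $A \cap B$. If $\abs{A \cap B} \geq k$, then $k$ such vertices form a linkage of order $k$. Otherwise, set $A' := A$ and $B' := (V(G) \setminus A) \cup (A \cap B)$. Then $A' \cup B' = V(G)$, the pair $(A', B')$ vacuously satisfies $G[A'] \cup G[B'] = G$, its order $\abs{A' \cap B'} = \abs{A \cap B}$ is less than $k$, and clearly $A \subseteq A'$ and $B \subseteq B'$.

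For the inductive step, pick any edge $e = xy \in E(G)$ and apply the induction hypothesis to $G - e$. If $G - e$ admits a linkage of order $k$ from $A$ to $B$, we are done. Otherwise, we obtain a separation $(A', B')$ of $G - e$ of order less than $k$ with $A \subseteq A'$ and $B \subseteq B'$. If both endpoints of $e$ lie on the same side of this separation (that is, $\{x,y\} \subseteq A'$ or $\{x,y\} \subseteq B'$), then $(A', B')$ is also a separation of $G$ with the required properties. It remains to handle the case where, after relabelling if necessary, $x \in A' \setminus B'$ and $y \in B' \setminus A'$; set $S := A' \cap B'$, so that $\abs{S} \leq k - 1$ and $S \cup \{x\}$ is an $(A,B)$-separator of $G$ of size at most $k$.

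In this case, consider the subgraph $G_1 := G[A']$, which has strictly fewer edges than $G$ because every edge incident with $y$ is absent. Apply the induction hypothesis within $G_1$ to the pair $(A, S \cup \{x\})$: either we obtain a linkage of order $k$ from $A$ to $S \cup \{x\}$ in $G_1$, or a separation of $G_1$ of order less than $k$, which combined with $B'$ (separating $S \cup \{x\}$ from $B$ in $G$) produces a separation of $G$ of order less than $k$ between $A$ and $B$, completing the proof. Symmetrically, apply induction inside $G_2 := G[B' \cup \{x\}]$ to the pair $(S \cup \{x\}, B)$. When both halves yield linkages, each has order $k \geq \abs{S \cup \{x\}}$, so the pigeonhole principle forces every vertex of $S \cup \{x\}$ to be used exactly once as an endpoint on each side; concatenating the two linkages at these common endpoints yields a linkage of order $k$ from $A$ to $B$ in $G$.

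The main obstacle is ensuring that both $G_1$ and $G_2$ have strictly fewer edges than $G$ so that the induction is valid. For $G_1$ this is automatic, since $e$ (and every other edge incident with $y$) is missing. For $G_2$, strict reduction requires $A' \setminus B' \neq \{x\}$, which is not guaranteed; the boundary case $A' = S \cup \{x\}$ means $\abs{A'} \leq k$ and $A \subseteq A'$, so $A$ itself is a small $(A,B)$-separator, and either $\abs{A} < k$ (yielding the desired separation immediately) or one applies the induction hypothesis in the smaller graph $G - x$ with suitably modified source and sink to construct a path through $x$ and $k-1$ further disjoint paths. A symmetric minor technicality involving isolated vertices in $A' \setminus B' \setminus \{x\}$ is resolved by first discarding such vertices, as they cannot affect any $(A,B)$-path or the required separation.
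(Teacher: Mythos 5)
The paper does not prove Menger's theorem: it cites it as a reference and uses it as a black box, so there is no proof in the paper to compare against.

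Your argument is an edge-deletion induction on $\abs{E(G)}$, a variant of a classical proof, and the core mechanism is sound: delete $e = xy$, obtain a separation $(A',B')$ of $G - e$, and when $x$ and $y$ are split across the separation, apply the induction hypothesis to $G_1 := G[A']$ with terminals $(A, S\cup\{x\})$ and to $G_2 := G[B'\cup\{x\}]$ with terminals $(S\cup\{x\}, B)$, where $S := A'\cap B'$. The combination step (gluing a sub-separation onto $(A',B')$, or pairing off the two $k$-linkages at the $k$ vertices of $S\cup\{x\}$) is correct. However, the final paragraph is a sketch rather than a proof, and it covers precisely the cases where the approach is delicate. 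In the boundary case $A' = S\cup\{x\}$ with $\abs{A} = k$, the graph $G_2$ equals $G$ and the induction hypothesis does not apply; you gesture at applying it to $G - x$ ``with suitably modified source and sink'' but do not say what the source is (it must be $S\cup\{y\}$, which works only because in this case every neighbour of $x$ in $G$ lies in $S\cup\{y\}$), nor do you verify that a resulting linkage in $G-x$ extends across $e$, or that a resulting separation of $G-x$ lifts to one of $G$ by adding $x$ to its $A$-side. The isolated-vertex issue is also genuinely problematic for an induction on $\abs{E(G)}$ alone, since discarding an isolated vertex does not decrease $\abs{E(G)}$; one should either induct on $\abs{V(G)} + \abs{E(G)}$ or reduce at the outset to graphs with no isolated vertex outside $A\cap B$, and neither set-up appears. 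Finally, a small omission: a concatenated path may pass through a vertex of $S\cup\{x\}$ that lies in $A\cup B$, so under the paper's definition of an $(A,B)$-path one must pass to the subpath from its last $A$-vertex to its first subsequent $B$-vertex before declaring a linkage.
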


We need a concept of a large wall dominated by a tangle, see~\cite[(2.3)]{RST1994}. 
For a positive integer~$t$, 
a set~$\cT$ of separations of order less than~$t$ is a \emph{tangle of order~$t$} in~$G$ if it satisfies the following. 
\begin{enumerate}
    [label=(\arabic*)]
    \item If ${(A, B)}$ is a separation of~$G$ of order less than~$t$, then~$\cT$ contains exactly one of~${(A, B)}$ and~${(B, A)}$. 
    \item If~${(A_1, B_1), (A_2, B_2), (A_3, B_3) \in \cT}$, then~${G[A_1] \cup G[A_2] \cup G[A_3] \neq G}$. 
\end{enumerate}
Let~$W$ be a wall of order~$g$ with~${g \geq 3}$ in a graph~$G$. 
Let~$\cT_W$ be the set of all separations~${(A, B)}$ of~$G$ of order less than~$g$ such that~${G[B]}$ contains a row of~$W$. 
By the following simple lemma, we may replace the row with the column.
The proof in~\cite{RS1991} is for the grid but one can easily modify it for the wall.

\begin{lemma}[{Robertson and Seymour~\cite[(7.1)]{RS1991}}]
    \label{lem:rowcol}
    Let~$W$ be a wall of order~$g$ in a graph~$G$.
    Let~${(A,B)}$ be a separation of order less than~$g$.
    Then~${G[B]}$ contains a row of~$W$
    if and only if it contains a column of~$W$.
\end{lemma}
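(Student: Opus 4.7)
The plan is to prove both directions by a single symmetric argument, using that in the wall $W$ the rows $R_1^W, \ldots, R_r^W$ are pairwise vertex-disjoint paths, the columns $C_1^W, \ldots, C_c^W$ are pairwise vertex-disjoint paths, and every row meets every column (all of this is immediate from the coordinate description of the elementary wall, since column $C_i$ uses first coordinates $\{2i-1,2i\}$ while row $R_j$ uses second coordinate $j$). It therefore suffices to show that if $G[B]$ contains some row of $W$, then $G[B]$ contains some column of $W$; the converse follows by interchanging the roles of rows and columns throughout.

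Suppose $R_j^W \subseteq G[B]$ for some $j \in [r]$, and assume for contradiction that $G[B]$ contains no column of $W$. For each $i \in [c]$ I would pick a vertex $v_i \in V(C_i^W) \setminus B$, which must lie in $A \setminus B$, and a vertex $u_i \in V(C_i^W) \cap V(R_j^W) \subseteq B$. The key step is to trace the subpath of the column $C_i^W$ from $u_i$ to $v_i$ and locate a vertex of $A \cap B$ on it. The crucial observation driving this step is that since $G[A] \cup G[B] = G$, no edge of $G$ can join a vertex of $B \setminus A$ to a vertex of $A \setminus B$; so the first vertex of the subpath that leaves $B$ must be preceded by a vertex of $A \cap B$.

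This gives at least one vertex of $A \cap B$ in each column $C_i^W$, and because the columns are pairwise vertex-disjoint these representatives are all distinct. Hence $\abs{A \cap B} \geq c \geq g$, contradicting the assumption that $(A, B)$ has order less than $g$. I do not anticipate any genuine obstacle: the argument is essentially a counting one, and the only ingredient beyond the definitions of a wall and of a separation is the elementary fact that a separation forbids edges between $B \setminus A$ and $A \setminus B$.
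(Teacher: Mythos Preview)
The paper does not give its own proof of this lemma; it simply cites Robertson and Seymour~\cite[(7.1)]{RS1991} and remarks that their argument for grids adapts easily to walls. Your argument is correct and is precisely the standard adaptation: use that the $c\geq g$ columns are pairwise disjoint and each meets the row $R_j^W\subseteq G[B]$, so if no column lies in $G[B]$ then each column contributes a distinct vertex to $A\cap B$, contradicting that the separation has order less than~$g$.
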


Kleitman and Saks (see~\cite[(7.3)]{RS1991}) showed that~$\cT_W$ is a tangle of order~$g$. 
A tangle~$\cT$ in~$G$ \emph{dominates} the wall~$W$ if~${\cT_W \subseteq \cT}$.
    
\begin{theorem}[Robertson, Seymour, and Thomas~\cite{RST1994}]
    \label{thm:wall}
    There exists a function~${f_{\ref*{thm:wall}} \colon \mathbb{N} \to \mathbb{N}}$ 
    such that 
    if~${g \geq 3}$ is an integer and~$\cT$ is a tangle in a graph~$G$ of order at least~${f_{\ref*{thm:wall}}(g)}$,
    then~$\cT$ dominates a ${(g, g)}$-wall~${W}$ in~${G}$. 
\end{theorem}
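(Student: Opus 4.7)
The plan is to derive Theorem~\ref{thm:wall} from the Robertson-Seymour excluded grid theorem, combined with a direct verification of the domination condition. The argument would go in three stages.

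First, I would pass from the tangle hypothesis to a lower bound on the tree-width of~$G$. A standard fact is that a tangle of order~$t$ certifies tree-width at least~${t-1}$: given any tree-decomposition of smaller width, one orients each induced separation according to~$\cT$ towards the ``big'' side, and follows this orientation through the tree to land in a single bag~$X$ with~${\abs{X} < t}$. Then three separations of the decomposition incident with~$X$ whose big sides lie in~$\cT$ cover~$G$ by their small sides, violating the second tangle axiom. Hence tangle order at least~$f_{\ref*{thm:wall}}(g)$ forces tree-width at least~${f_{\ref*{thm:wall}}(g) - 1}$.

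Second, I would invoke the excluded grid / wall theorem: sufficiently large tree-width forces an elementary~${(g',g')}$-wall as a subgraph (not just a minor, by a topological-minor upgrade for graphs of bounded maximum degree, or directly via the standard ``jungle'' construction on walls). Choose~$f_{\ref*{thm:wall}}(g)$ large enough that the resulting wall has order~${g' = g + s}$ for a small additive slack~$s$ to be fixed in the last stage.

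Third, and this is where the tangle hypothesis is essential, I would arrange that the wall~$W$ obtained be dominated by~$\cT$. Let~${(A,B)}$ be any separation of~$G$ of order less than~$g$ such that~$G[B]$ contains a row of~$W$; I need~${(A,B) \in \cT}$. Suppose instead that~${(B,A) \in \cT}$. Using the slack~$s$, I can pick two further separations~${(A_1,B_1), (A_2,B_2) \in \cT_W \subseteq \cT}$ each of order less than~$g$ whose ``small'' sides~$A_1$ and~$A_2$ are disjoint from~${A \cap B}$ and together with~$B$ cover~$G$: this is possible because~$W$ has enough vertex-disjoint rows and columns to route around~${A \cap B}$. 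But then the three separations~${(B,A), (A_1,B_1), (A_2,B_2) \in \cT}$ satisfy~${G[B] \cup G[A_1] \cup G[A_2] = G}$, contradicting the second tangle axiom. Hence~${\cT_W \subseteq \cT}$, as required. Finally, to upgrade the elementary wall to a ``wall in~$G$'' in the sense of this paper, one uses Lemma~\ref{lem:rowcol} to see that the derived rows-or-columns characterisation of~$\cT_W$ is unchanged under such operations.

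The main obstacle is the quantitative excluded grid theorem itself, which is the deep ingredient; the function~$f_{\ref*{thm:wall}}(g)$ inherits its growth rate from whichever version is quoted. The tree-width reduction and the final domination step are comparatively routine once enough slack~$s$ is built into the wall order produced by the excluded grid theorem.
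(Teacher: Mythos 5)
The paper does not prove Theorem~\ref{thm:wall}; it is stated as a direct citation to Robertson, Seymour, and Thomas~\cite{RST1994}, so there is no in-paper argument to compare yours against. Your stages one and two are sound as sketches: a tangle of order~$t$ does force tree-width at least~$t-1$, and the quantitative excluded grid (wall) theorem then yields a large wall subgraph.

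Stage three contains a genuine gap. First, the argument as written is circular: you ``pick two further separations $(A_1,B_1),(A_2,B_2)\in\cT_W\subseteq\cT$,'' but the containment $\cT_W\subseteq\cT$ is exactly what you are trying to establish. Second, and more fundamentally, the circularity cannot be patched by any amount of slack, because a wall produced by a bare tree-width argument need not be dominated by the given tangle at all. Take $G$ to be the disjoint union (or a one-edge join) of an $n\times n$ grid $G_1$ and an $m\times m$ grid $G_2$ with $m\gg n\geq f_{\ref*{thm:wall}}(g)$, and let $\cT$ be the order-$n$ tangle that orients every small separation of $G$ towards the side containing most of $G_1$. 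Then $\cT$ meets your hypothesis, yet the excluded grid theorem applied to $G$ (whose tree-width is $\Theta(m)$) may return a wall $W$ lying entirely inside $G_2$. For the order-zero separation $(A,B)=(V(G_1),V(G_2))$ one has $(A,B)\in\cT_W$ but $(B,A)\in\cT$, so $\cT$ does not dominate $W$; nor can any sub-wall of $W$ help, since every sub-wall is still inside $G_2$. The actual content of the Robertson--Seymour--Thomas theorem is that the tangle must be used to \emph{steer the construction} of the wall, deciding at each low-order separation which side to recurse into. That orientation information is lost if one simply invokes the excluded grid theorem and tries to verify domination afterwards, which is why your three-separation contradiction has nothing to bite on.
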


We will show that if a tangle dominates a wall~$W$, then it also dominates every large subwall of~$W$. 
We first prove the following lemma. 

\begin{lemma}
    \label{lem:wallseparations}
    Let~$W$ be a wall in a graph~$G$ and let~${S}$ be a subset of~${V(G)}$ of size exactly~$t$. 
    For each column~$C^W_x$ and row~$R^W_y$ of~$W$, 
    there are no more than~$t^2$ nails of $W$ which belong to components of~${W-S}$ that do not intersect~${C^W_x \cup R^W_y}$.
\end{lemma}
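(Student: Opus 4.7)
The plan is to define an injection from the set $A$ of ``bad'' nails (nails of $W$ contained in components of $W-S$ disjoint from $V(C^W_x) \cup V(R^W_y)$) into $S \times S$, which immediately yields $|A| \leq t^2$. I rely on three structural facts about walls that follow from the definitions: each row $R^W_j$ and each column $C^W_i$ of $W$ is a path; two distinct rows (respectively, two distinct columns) of $W$ are vertex-disjoint in $W$; and a row and a column of $W$ meet in a single subdivided edge.

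For each bad nail $v$ corresponding to position $(i', j')$ in the elementary wall, one has $j' \neq y$ and $\lceil i'/2 \rceil \neq x$. Since $R^W_{j'}$ is a path meeting $C^W_x$, there is a unique subpath $R_v \subseteq R^W_{j'}$ from $v$ to the first vertex of $C^W_x$ encountered along $R^W_{j'}$; analogously I define $C_v \subseteq C^W_{\lceil i'/2 \rceil}$ from $v$ to the first vertex of $R^W_y$. Because $v$ is separated from $V(C^W_x) \cup V(R^W_y)$ by $S$, both $R_v$ and $C_v$ must contain a vertex of $S$; I pick $\rho(v) \in S \cap R_v$ and $\chi(v) \in S \cap C_v$, and set $\phi(v) := (\rho(v), \chi(v)) \in S \times S$.

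To check that $\phi$ is injective, I would suppose $\phi(v) = \phi(v')$ for distinct $v, v' \in A$. Then $\rho(v) = \rho(v')$ belongs to $R^W_{j_v} \cap R^W_{j_{v'}}$, where $j_v, j_{v'}$ denote the rows through $v, v'$; since distinct rows of $W$ are vertex-disjoint, $j_v = j_{v'}$, and analogously the columns coincide. Hence $v$ and $v'$ are both nails at a common row-column intersection, i.e.\ the two endpoints of the unique subdivided edge shared by some row and some column of $W$.

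The main obstacle I anticipate is ruling out exactly this degenerate collision, since in a wall each (row, column) pair does contain two nails. To handle it I would refine the choice of $\rho$ and $\chi$ by analyzing the two possible orientations of $R_v$ and $C_v$ with respect to the shared row-column edge: from one of the two candidate nails (the one further from the cross) both the row subpath and the column subpath traverse this shared edge before branching, so if $S$ meets this edge we can route one of $\rho(v')$ or $\chi(v')$ onto it to break the tie, while if $S$ avoids the edge then both nails lie in the same component of $W-S$ and can be handled together via a local argument on that component's boundary.
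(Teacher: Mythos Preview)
Your injection idea is natural and close in spirit to the paper's argument, but there is a genuine gap in the final step. The map $\phi(v)=(\rho(v),\chi(v))$ with $\rho(v)\in S\cap R_v$ and $\chi(v)\in S\cap C_v$ is in general only $2$-to-$1$, and your tie-breaking sketch does not cover all cases. First, it is not true that one of the two nails is always ``further from the cross'' in both coordinates: when $C^W_x$ lies to one side and $R^W_y$ to the other (your sub-case B), the nail closer to $C^W_x$ along the row is the one \emph{further} from $R^W_y$ along the column, so neither $R_v\subseteq R_{v'}$ together with $C_v\subseteq C_{v'}$ nor the reverse holds. Second, even in the case you do treat, if $S$ misses the shared corridor and meets each of $R_{v'}$ and $C_{v'}$ exactly once, then $(S\cap R_v)\times(S\cap C_v)=(S\cap R_{v'})\times(S\cap C_{v'})$ is a single pair, so no choice of $\rho,\chi$ respecting your constraints can separate $v$ from $v'$. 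This situation does occur (with additional vertices of $S$ blocking the other escape routes from $v$), so an injection of the form you propose simply does not exist in general; the promised ``local argument on the component's boundary'' would have to produce a pair outside $(S\cap R_v)\times(S\cap C_v)$, at which point your row/column disjointness argument for injectivity no longer applies.

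The paper closes exactly this gap by abandoning the injection and doing a counting argument instead. After an inductive reduction to $S\subseteq N^W$, it assigns to each $s_i\in S$ the set $S^r_i$ of nails in the row of $s_i$ that are separated from $C^W_x$ by $s_i$ along that row (and analogously $S^c_i$ for the column). Every bad nail lies in some $S^c_i\cap S^r_j$, each such cell has size at most $2$, the diagonal cells $S^c_i\cap S^r_i$ have size at most $1$, and crucially if $S^c_i\cap S^r_j\neq\emptyset$ with $i\neq j$ then $S^c_j\cap S^r_i=\emptyset$. Summing gives $2\binom{t}{2}+t=t^2$. In other words, the two nails at an intersection are allowed to share the pair $(s_i,s_j)$, but this is paid for by the mirror pair $(s_j,s_i)$ being unused. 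Your swap idea points in this direction, but carrying it out requires precisely this asymmetry observation, together with the diagonal bound, rather than a local fix at a single intersection.
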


\begin{proof}
    We proceed by induction on~$t$. 
    The statement is trivial if~${t = 0}$. 
    We may assume that~${G = W}$.
    Let~${S =: \{ s_i \colon i \in [t] \}}$ and 
    let~${T := V(C^W_x\cup R^W_y)}$.
    Suppose there is a vertex~$v$ in~${S \setminus N^W}$, and let~$P$ be the $N^W$-path in~$W$ containing~$v$. 
    If both or neither of the endvertices of~$P$ are in components of~${W-S}$ that intersect~$T$, 
    then we may apply the inductive hypothesis to~${S \setminus \{v\}}$. 
    Otherwise, replacing~$v$ in~$S$ with the unique endvertex of~$P$ which is in a component of~${W-S}$ that intersects~$T$ does not decrease the number of nails in components of~${W-S}$ that do not intersect~$T$. 
    Hence, we may assume that~${S \subseteq N^W}$.

    For each~${i \in [t]}$, let~${c(i)}$ be the integer such that~$s_i$ is in~$C^W_{c(i)}$ and let~${r(i)}$ be the integer such that~$s_i$ is in~$R^W_{r(i)}$. 
    For each~${i \in [t]}$, let~$S^c_i$ be the set of nails~$v$ of~$W$ in~${C^W_{c(i)}-S}$ such that the ${(v,R^W_y)}$-subpath of~$C^W_{c(i)}$ contains the vertex~$s_i$, 
    and let~$S^r_i$ be the set of nails~$v$ of~$W$ in~${R^W_{r(i)}-S}$ such that the ${(v,C^W_x)}$-subpath of~$R^W_{r(i)}$ contains the vertex~$s_i$. 
    Note that for every nail~$v$ of~$W$, if $v$ is in a component of~${W-S}$ not intersecting~$T$, then there exist~${i,j \in [t]}$ such that~${v \in S^c_i \cap S^r_j}$. 
    Also note that~${\abs{S^c_i \cap S^r_j} \leq 2}$, and that if~${\abs{S^c_i \cap S^r_j} > 0}$ and~${i \neq j}$, then~${\abs{S^c_j \cap S^r_i} = 0}$. 
    Furthermore, for~${i \in [t]}$, the nails in~${S^c_i \cap S^r_i}$ are in~${(C^W_{c(i)} \cap R^W_{r(i)}) - s_i}$, so~${\abs{S^c_i \cap S^r_i} \leq 1}$. 
    It follows that the number of nails of~$W$ which are in components of~${W-S}$ that do not intersect~${C^W_x \cup R^W_y}$ is at most~${2{\binom{t}{2}} + t = t^2}$. 
\end{proof}

\begin{lemma}
    \label{lem:dominatedsubwall}
    Let~${w \geq t \geq 3}$ be integers, let~$W$ be a wall of order~$w$ in a graph~$G$, and let~$\mathcal{T}$ be a tangle dominating~$W$. 
    If~$W'$ is a subwall of~$W$ of order~$t$ and $\abs{N^{W'}\cap N^W}> (2t-1)(t-1)$, 
    then~$\mathcal{T}$ dominates~$W'$. 
    
    In particular, if $W'$ is $N^W$-anchored, then~$\mathcal{T}$ dominates~$W'$.
\end{lemma}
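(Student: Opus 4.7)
The plan is to reduce to showing that every separation $(A,B)$ of order less than $t$ with $G[B]$ containing a row of $W'$ satisfies $(A,B) \in \mathcal{T}_W$, that is, $G[B]$ contains a row of $W$ as well. Since $\mathcal{T}\supseteq \mathcal{T}_W$ (by the definition of domination), this yields the conclusion. I would argue by contradiction, assuming $G[B]$ contains no row of $W$; then Lemma~\ref{lem:rowcol} applied to $W$ says $G[B]$ also contains no column of $W$.

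Writing $S := A\cap B$, I would first pick a row $R^W_y$ and a column $C^W_x$ of $W$ both disjoint from $S$, which is possible since each vertex of $S$ lies in at most one row and at most one column of $W$, and $\abs{S} < t \leq w$. They share a nail of $W$ and are each connected in $W-S$, so they lie in a common component $C_A$ of $W-S$; the assumption that $V(R^W_y) \not\subseteq B$, together with $R^W_y$ being disjoint from $S$, then forces $C_A \subseteq A \setminus B$. Lemma~\ref{lem:wallseparations} applied to $W$ with $S$, $R^W_y$, $C^W_x$ bounds by $(t-1)^2$ the number of nails of $W$ outside $C_A$, so the nails of $W$ in $B$ are at most $\abs{S} + (t-1)^2 \leq (t-1)t$.

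For the opposite bound, Lemma~\ref{lem:rowcol} applied to $W'$ also yields a column $C'$ of $W'$ in $G[B]$. Applying Lemma~\ref{lem:wallseparations} to $W'$ with $S$, $R'$, $C'$, at most $(t-1)^2$ nails of $W'$ lie in components of $W'-S$ not meeting $R' \cup C'$; since $R' \cup C'$ is contained in $B$ and $S\subseteq B$, any component of $W'-S$ meeting $R' \cup C'$ lies in $B \setminus A$, so at most $(t-1)^2$ nails of $W'$ lie in $A \setminus B$. Intersecting with $N^W$, more than $(2t-1)(t-1) - (t-1)^2 = (t-1)t$ nails of $N^{W'}\cap N^W$ lie in $B$, which are in particular nails of $W$ in $B$, contradicting the previous bound.

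The main delicacy is to correctly identify which side of the separation each component of $W-S$ and of $W'-S$ lies on: the chosen row and column of $W$ pin their shared component to the $A$-side, whereas the row and column of $W'$ inside $G[B]$ pin any component of $W'-S$ they meet to the $B$-side. The ``in particular'' clause then follows immediately because an $N^W$-anchored wall $W'$ of order $t\ge 3$ has $\abs{N^{W'}\cap N^W} = \abs{N^{W'}} \ge 2t^2 - 2 > 2t^2 - 3t + 1 = (2t-1)(t-1)$.
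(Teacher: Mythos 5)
Your proof is correct and follows essentially the same strategy as the paper's: apply Lemma~\ref{lem:wallseparations} once to~$W$ (with a row and column pinned to the $A$-side) and once to~$W'$ (with a row and column pinned to the $B$-side), then count the nails in~$N^{W'}\cap N^W$ to force a contradiction. The differences are only cosmetic: your first invocation of Lemma~\ref{lem:rowcol} on~$W$ is never actually used, and you phrase the final step as two incompatible bounds on the number of nails of~$W$ in~$B$, whereas the paper exhibits a single nail whose component of~$G-S$ would have to lie on both sides of the separation.
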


\begin{proof}
    Suppose for a contradiction that~$\mathcal{T}$ does not dominate~$W'$. 
    Then~$G$ has a separation~${(A,B)}$ of order less than~${t}$ such that~${G[B]}$ contains some row~$R'$ of~$W'$ 
    and~${(A,B) \notin \mathcal{T}}$. 
    The order of~$\mathcal{T}$ is at least~$w$, so~$\mathcal{T}$ contains ${(B,A)}$ and therefore~${G[A]}$ contains some row~$R$ of~$W$.
    Let~${S := A \cap B}$. 
    Since~$W$ has more than~$\abs{S}$ columns and~$R$ intersects each of them, ${G[A]}$ contains some column~$C$ of~$W$, and similarly~${G[B]}$ contains some column~$C'$ of~$W'$. 
    By Lemma~\ref{lem:wallseparations} applied to~$W$, there are at most~${(t-1)^2}$ nails of~$W$ in components of~${G-S}$ which do not intersect~${R \cup C}$. 
    Similarly, there are at most~${(t-1)^2}$ nails of~$W'$ in components of~${G-S}$ which do not intersect~${R'\cup C'}$. 
    Since~${\abs{N^{W'} \cap N^W} > (2t-1)(t-1) \geq 2(t-1)^2 + \abs{S}}$, 
    there is a vertex in~${(N^{W'} \cap N^W) \setminus S}$
    that is in a component of~${G-S}$ intersecting~${R \cup C}$
    and also in a component of~${G-S}$ intersecting~${R' \cup C'}$.
    However, every component of~${G-S}$ intersecting~${R \cup C}$ is in~${G[A \setminus B]}$
    and every component of~${G-S}$ intersecting~${R' \cup C'}$ is in~${G[B \setminus A]}$, a contradiction.
\end{proof}

\subsection{Groups}

Let $\Gamma_i$ be a group for each $i\in [m]$.
We refer to the \emph{direct product} of these groups by~${\prod_{i \in [m]} \Gamma_i}$ and denote by~$\pi_j$ the projection map from~${\prod_{i \in [m]} \Gamma_i}$ to~$\Gamma_j$ for~${j \in [m]}$. 
For an element~$g$ of~$\prod_{i \in [m]} \Gamma_i$, we refer to the image~$\pi_i(g)$ as the \emph{$i$-th coordinate of~$g$}. 
When we say~${\Gamma = \prod_{i \in [m]} \Gamma_i}$ is a product of groups, we implicitly use this notation. 

For a non-empty set of elements~${S = \{a_i \colon i \in [t] \}}$ in a group~$\Gamma$, we denote by~${\gen{S}}$ or~${\gen{ a_i \colon i \in [t] }}$ the \emph{subgroup generated by~$S$}, which is the intersection of all subgroups of~$\Gamma$ containing~$S$.

\subsection{Group-labelled graphs}

Let~$\Gamma$ be an abelian group. 
A \emph{$\Gamma$-labelled graph} is a pair of a graph~$G$ and a function~${\gamma \colon E(G) \to \Gamma}$.
We say that~$\gamma$ is a \emph{$\Gamma$-labelling} of~$G$. 
A \emph{subgraph} of a $\Gamma$-labelled graph~${(G,\gamma)}$ is a $\Gamma$-labelled graph~$(H,\gamma')$ such that~$H$ is a subgraph of~$G$ and~$\gamma'$ is the restriction of~$\gamma$ to~$E(H)$. 
By a slight abuse of notation, we may refer to this $\Gamma$-labelled graph by~$(H,\gamma)$. 

For a $\Gamma$-labelled graph~${(G,\gamma)}$ and a subgraph~${H \subseteq G}$, we define~$\gamma(H)$ as~${\sum_{e \in E(H)} \gamma(e)}$, 
which we call the \emph{$\gamma$-value of~$H$}. 
Note that this definition implies that the $\gamma$-value of the empty subgraph is~$0$. 
We say that a subgraph~$H$ is \emph{$\gamma$-non-zero} if~${\gamma(H) \neq 0}$, and otherwise, we call it \emph{$\gamma$-zero}. 

We will often consider the special case where~$\Gamma$ is the product~${\prod_{i \in [m]} \Gamma_i}$ of~$m$ abelian groups for a positive integer~$m$. 
In this case, we denote by~$\gamma_i$ the composition of~$\gamma$ with the projection to~$\Gamma_i$. 

A $\Gamma$-labelled graph~${(G,\gamma)}$ is \emph{$\gamma$-bipartite} if every cycle of~$G$ is $\gamma$-zero. 

We frequently take a subgroup~$\Lambda$ of~$\Gamma$ and consider a new labelling using the quotient group~${\Gamma/\Lambda}$. 
For a $\Gamma$-labelled graph~${(G, \gamma)}$ and a subgroup~$\Lambda$ of~$\Gamma$, 
the~${\Gamma/\Lambda}$-labelling~$\lambda$ defined by~${\lambda(e) := \gamma(e) + \Lambda}$ for all edges~${e \in E(G)}$ is the \emph{induced $(\Gamma/\Lambda)$-labelling} of~$(G,\gamma)$. 

We will use the following duality theorem between packing and covering of $\gamma$-non-zero $A$-paths. 

\begin{theorem}[Wollan~\cite{Wollan2010}]
    \label{thm:tpath}
    Let~$k$ be a positive integer, let~$\Gamma$ be an abelian group, let~$(G,\gamma)$ be a $\Gamma$-labelled graph, and let~${A \subseteq V(G)}$. 
    Then~$G$ contains~$k$ vertex-disjoint $\gamma$-non-zero $A$-paths or 
    there exists a set~${X \subseteq V(G)}$ of size at most~${f_{\ref*{thm:tpath}}(k) := 50k^4}$ such that~${G-X}$ has no $\gamma$-non-zero $A$-paths.
\end{theorem}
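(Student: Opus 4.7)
I would argue by induction on $k$, combining Mader's classical Erd\H{o}s--P\'osa theorem for $A$-paths with an extremal-minimality argument. The base case $k=1$ is immediate: either a $\gamma$-non-zero $A$-path exists, or the empty set suffices. For the inductive step, if $G$ has no packing of $k-1$ vertex-disjoint $\gamma$-non-zero $A$-paths then the inductive hypothesis supplies a hitting set of size at most $50(k-1)^4 \leq 50k^4$, and we are done. So assume $G$ admits a packing $\mathcal{P} = \{P_1,\dots,P_{k-1}\}$ of $\gamma$-non-zero $A$-paths, chosen to be \emph{maximum} and, subject to that, to minimise $\sum_i \abs{E(P_i)}$. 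Let $Y$ denote the set of internal vertices of the paths in $\mathcal{P}$, and set $G' := G - Y$. By maximality of $\mathcal{P}$, every $A$-path in $G'$ is $\gamma$-zero; otherwise we could augment $\mathcal{P}$ to a packing of size~$k$.

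\medskip

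Now I would apply Mader's classical $A$-paths theorem to $G'$. If $G'$ admits a hitting set $X'$ of size $O(k)$ for \emph{all} $A$-paths in $G'$, then $X' \cup Y$ hits every $\gamma$-non-zero $A$-path of $G$, and it suffices to bound $\abs{Y}$ polynomially in $k$. Here the minimality of $\mathcal{P}$ should force each $P_i$ to be ``irreducible'' in the sense that no chord or near-chord can shorten it: otherwise, since every $A$-path in $G'$ is $\gamma$-zero, one could reroute through $G'$ to find a shorter $\gamma$-non-zero $A$-path replacing $P_i$, contradicting the choice of $\mathcal{P}$. A polynomial-in-$k$ bound on the length of each $P_i$, together with $\abs{\mathcal{P}} = k-1$, then yields $\abs{Y} = O(k^4)$, matching the stated $50k^4$ up to constants.

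\medskip

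\textbf{Main obstacle.} The hard case is when $G'$ contains many vertex-disjoint ($\gamma$-zero) $A$-paths. Here one must combine these $\gamma$-zero paths with segments of the $P_i$ to build $k$ vertex-disjoint $\gamma$-non-zero $A$-paths, contradicting maximality. The abelian group structure enters crucially: if $Q$ is a $\gamma$-zero $A$-path in $G'$ which (after a short detour) meets some $P_i$ at an internal vertex~$v$, then splitting $P_i$ at~$v$ and rerouting through~$Q$ produces two new $A$-paths whose $\gamma$-values sum to $\gamma(P_i) \neq 0$, so at least one of them is $\gamma$-non-zero. Executing many such swaps simultaneously while preserving vertex-disjointness and avoiding conflicts between the $P_i$ is the delicate step; a Menger-type argument together with pigeonholing on the $\gamma$-values of many vertex-disjoint $A$-paths in $G'$ should convert ``many $A$-paths in $G'$'' into ``many vertex-disjoint rerouting opportunities'', finally yielding the contradiction. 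The principal technical difficulty is bookkeeping the interaction of several simultaneous reroutings across distinct $P_i$'s, which is precisely where the polynomial $50k^4$ bound is squeezed out of the extremal argument.
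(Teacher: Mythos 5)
This statement is cited in the paper from Wollan~\cite{Wollan2010}; the paper does not reprove it, so there is no in-paper proof to compare against. Your sketch must therefore be judged on its own merits, and there is a fundamental gap.

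Your plan hinges on bounding~$\abs{Y}$, the total number of internal vertices of the chosen maximum packing~$\mathcal{P}$, polynomially in~$k$. This is impossible. Consider the graph that is a single long path~$P$ from~$a\in A$ to~$b\in A$ with~$\gamma(P)\neq 0$ and no other edges: then~$\mathcal{P}=\{P\}$ is the unique maximum packing, every~$P_i$ is trivially ``irreducible'' (there are no chords or alternative routes at all), and yet~$\abs{Y}$ is arbitrarily large. The correct hitting set here is a single internal vertex of~$P$, which your construction~$X'\cup Y$ does not produce. More generally, irreducibility/minimality of a path packing never yields a length bound depending only on~$k$; the hitting set must be found by a completely different mechanism than ``take everything the packing touches.'' This kills the main line of the argument. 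A secondary (repairable) issue: it is not true that every~$A$-path in~$G'=G-Y$ is~$\gamma$-zero. A~$\gamma$-non-zero~$A$-path~$Q$ in~$G'$ only needs to share an \emph{endpoint} (a vertex of~$A$) with some~$P_i$ to block augmentation, so the correct conclusion is that every~$\gamma$-non-zero~$A$-path in~$G'$ meets the~$\leq 2(k-1)$ endpoints of paths in~$\mathcal{P}$; one should delete those endpoints as well before claiming~$\gamma$-zeroness. But even with that fix, the unbounded~$\abs{Y}$ issue remains.

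For context: Wollan's actual proof does not proceed by an extremal choice of packing plus a length bound. It works with the tangle/wall machinery from graph minors (structure of group-labelled graphs admitting no non-zero~$A$-path through a given tangle), which is how one gets a hitting set that is small even when individual witnesses are long. If you want to prove a theorem of this type, the right intuition is that when no~$k$ disjoint non-zero~$A$-paths exist, the obstruction is a small separator plus some ``trivial'' pieces in which all~$A$-paths are forced to be~$\gamma$-zero, and the small separator --- not the packing --- is the hitting set. The ``rerouting through~$\gamma$-zero~$A$-paths'' intuition in your ``main obstacle'' paragraph is closer to the right spirit for the packing side, but it does not rescue the covering side.
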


Let~$x$ be a vertex of~$G$ and let~${\delta \in \Gamma}$ be an element of order~$2$. 
For each edge~$e$ of~$G$, let 
\[
    \gamma'(e)=
    \begin{cases}
        \gamma(e) + \delta & \text{if $e$ is incident with $x$,}\\
        \gamma(e) &\text{otherwise.}
    \end{cases}
\]
We say that~$\gamma'$ is obtained from~$\gamma$ by \emph{shifting by~$\delta$ at~$x$}. 
Observe that this shift does not change the weight sum of a cycle because~${\delta + \delta = 0}$.
We say two $\Gamma$-labellings~$\gamma_1$ and~$\gamma_2$ of~$G$ are \emph{shifting-equivalent} 
if~$\gamma_1$ can be obtained from~$\gamma_2$ by a sequence of shifting operations. 

The following lemma asserts that for $\gamma$-bipartite graphs we can find a shifting-equivalent $\Gamma$-labelling~$\gamma'$ in which every corridor is $\gamma'$-zero. 
Similar ideas appear in Geelen and Gerards~\cite{GeelenG2009}.

\begin{lemma}
    \label{lem:shifting}
    Let~$\Gamma$ be an abelian group, let~${(G,\gamma)}$ be a~$\Gamma$-labelled graph and let~${H \subseteq G}$ be a subdivision of a $3$-connected 
    graph~${\hat{H}}$. 
    If~$H$ is $\gamma$-bipartite, then~$\gamma$ is shifting-equivalent to a $\Gamma$-labelling~$\gamma'$ such that every corridor of~$H$ is $\gamma'$-zero.
\end{lemma}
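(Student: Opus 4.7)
The plan is to first observe that $3$-connectivity forces the $\gamma$-value of every corridor of $H$ to lie in the $2$-torsion subgroup of $\Gamma$, and then to define appropriate order-$2$ shifts at the branch vertices of $H$ by using a spanning tree of $\hat{H}$ to ``integrate'' these corridor values into vertex potentials. Since shifts are only permitted for elements of order $2$, the first observation is what makes the second step legal.

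First I would show that $\gamma(P) \in \Gamma[2]$ for every corridor $P$ of $H$, where $\Gamma[2]$ denotes the subgroup of elements of order dividing $2$. Let $u,v$ be the endvertices of $P$, so $uv \in E(\hat{H})$. Since $\hat{H}$ is $3$-connected, $\hat{H} - uv$ is $2$-connected and so contains two internally disjoint $u$-$v$ paths; together with the edge $uv$ these yield three internally vertex-disjoint $u$-$v$ paths in $\hat{H}$, which lift to three internally vertex-disjoint $u$-$v$ paths $Q_1,Q_2,Q_3$ in $H$ (one of which is $P$). Each union $Q_i \cup Q_j$ is a cycle of $H$, and $\gamma$-bipartiteness gives $\gamma(Q_i) + \gamma(Q_j) = 0$ for all $i \neq j$. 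Combining these three equations yields $2\gamma(Q_i) = 0$ for each $i$, and in particular $\gamma(P) \in \Gamma[2]$.

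Next I would construct the desired shifting. Fix a spanning tree $T$ of $\hat{H}$ rooted at a vertex $r$. For each vertex $v$ of $\hat{H}$ (equivalently each branch vertex of $H$), let $\delta_v$ be the sum of the $\gamma$-values of the corridors corresponding to the edges of $T$ on the $r$-$v$ path, so $\delta_r = 0$. By the previous paragraph each $\delta_v$ is a sum of order-$2$ elements, hence itself lies in $\Gamma[2]$. Let $\gamma'$ be obtained from $\gamma$ by successively shifting by $\delta_v$ at each branch vertex $v$ with $\delta_v \neq 0$; these shifts are legal since each $\delta_v$ has order $2$.

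Finally I would verify that every corridor of $H$ is $\gamma'$-zero. For a corridor $P$ from $u$ to $v$, exactly one edge of $P$ is incident to $u$, exactly one is incident to $v$, and each internal vertex of $P$ has degree $2$ in $H$ and is therefore not a branch vertex of $H$, so $\gamma'(P) = \gamma(P) + \delta_u + \delta_v$. When $uv \in E(T)$, the definition of $\delta$ and the identity $2x = 0$ on $\Gamma[2]$ give $\delta_u + \delta_v = \gamma(P)$, so $\gamma'(P) = 2\gamma(P) = 0$. When $uv \notin E(T)$, the edge $uv$ and the $u$-$v$ path in $T$ form a cycle of $\hat{H}$ which lifts to a $\gamma$-zero cycle of $H$, yielding $\gamma(P) + \sum_{e \in T[u,v]} \gamma(P_e) = 0$; again using $2x = 0$ one computes $\delta_u + \delta_v = \sum_{e \in T[u,v]} \gamma(P_e) = \gamma(P)$, so $\gamma'(P) = 0$. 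The only real subtlety is the first step: without the $3$-connectivity assumption on $\hat{H}$, corridor values need not be $2$-torsion and the shifting construction would be unavailable.
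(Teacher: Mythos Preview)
Your proof is correct and follows essentially the same idea as the paper's: both use a spanning tree of $\hat{H}$, both exploit $3$-connectivity to show corridor values are $2$-torsion, and both shift at branch vertices by the accumulated tree-path value. The paper presents this as a maximal-subtree induction (grow $T'$ one leaf at a time, shifting at the new vertex), whereas you compute all the potentials $\delta_v$ up front and shift simultaneously; these are the same construction unrolled differently. One minor streamlining in the paper worth noting: once tree-edge corridors are $\gamma'$-zero, the non-tree case follows immediately from $\gamma'$-bipartiteness via the fundamental cycle, so your separate verification for $uv\notin E(T)$, while correct, is not needed.
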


\begin{proof}
    Let~$T$ be a spanning tree of~${\hat{H}}$ rooted at some~${r \in V({\hat{H}})}$. 
    It is enough to find a $\Gamma$-labelling~$\gamma'$ of~$G$ which is shifting-equivalent to~$\gamma$, such that all corridors of~$H$ corresponding to edges in~$T$ are $\gamma'$-zero, 
    because~$H$ is $\gamma'$-bipartite. 
    Choose a $\Gamma$-labelling~$\gamma'$ shifting-equivalent to~$\gamma$ and a subtree~$T'$ of~$T$ containing~$r$ such that all corridors of~$H$ corresponding to edges in~$T'$ are $\gamma'$-zero, and subject to these conditions,~$\abs{V(T')}$ is maximised.
    
    Suppose that~${T' \neq T}$. 
    Then there is an edge~$vw$ of~$T$ such that~${v \in V(T')}$ and~${w \notin V(T')}$.
    Let~$P$ be the corridor of~$H$ corresponding to the edge~$vw$.
    Since~$\hat{H}$ is $3$-connected, 
    there is a cycle~$O$ in~${H-E(P)}$ containing~$v$ and~$w$. 
    Let~$O_1$ and~$O_2$ denote the distinct cycles in~${O \cup P}$ containing~$P$.
    Since $H$ is $\gamma'$-bipartite, we have that ${\gamma'(P) + \gamma'(P) = \gamma'(O_1) + \gamma'(O_2) - \gamma'(O) = 0}$, 
    and hence~$\gamma'(P)$ is an element of order at most~$2$. 
    Let~$\gamma''$ be a $\Gamma$-labelling of~$G$ obtained from~$\gamma'$ 
    by shifting by~$\gamma'(P)$ at~$w$. 
    Let~${T'' = T[V(T') \cup \{w\}]}$. 
    Then all corridors of~$H$ corresponding to edges of~$T''$ are $\gamma''$-zero, contradicting the choice of~$\gamma'$ and~$T'$.
\end{proof}

\section{Discussion}
\label{sec:overview}

\subsection{Proof Sketch}
\label{subsec:sketch}

We now sketch the proof of Theorem~\ref{thm:main}, which will proceed by induction on~$k$.
We consider the group~${\Gamma := \prod_{i \in [m]} \Gamma_i}$ and a single $\Gamma$-labelling, which simplifies the arguments we present and in particular allows us to consider quotient groups. 
The goal will be to show that if the smallest hitting set~$T$ for the cycles in~$\mathcal{O}$ is sufficiently larger than~${f_m(k-1)}$, then there is a half-integral packing of~$k$ cycles in~$\mathcal{O}$. 
To construct this packing, in Section~\ref{sec:pack}, we first find a tangle whose order is correlated with~${\abs{T}/f_m(k-1)}$, which allows us to construct a large wall as a subgraph in~$G$. 
In particular, this wall will have the property that no cycle in~$\mathcal{O}$ can be separated from the nails of the wall by deleting a small set of vertices. 
Our strategy will be to find disjoint sets of $\gamma$-non-zero paths and use the structure of the wall to connect them up to form cycles. 
But before we begin to do this, in Section~\ref{sec:cleanwalls} we find a subwall~$W$ of the original wall such that the $N^{W}$-paths in~$W$ have some nice homogeneity properties with respect to the group labelling. 
It would be simplest if we could guarantee that all $N^{W}$-paths in~$W$ were $\gamma$-zero, but this is not feasible with arbitrary abelian groups. 
Instead, we deal separately with the factors~$\Gamma_i$ of~$\Gamma$ for which we can guarantee that all $N^{W}$-paths in~$W$ are $\gamma_i$-zero, and the factors of~$\Gamma$ for which we cannot find any large subwall of~$W$ with this property.

Applying Theorem~\ref{thm:tpath}, we can find for every factor~$\Gamma_i$ of~$\Gamma$ a large set of disjoint~$N^{W}$-paths which are $\gamma_i$-non-zero. 
The difficulty here lies in combining these paths together such that for all~${i \in [m]}$ the total $\gamma_i$-value is not in~$\Omega_i$. 
To achieve this, in Section~\ref{sec:handles} we show how to iteratively find sets of disjoint paths which are non-zero with respect to a quotient group defined in terms of the previously constructed paths, and link them up to the boundary of a subwall of~$W$. 
In Section~\ref{sec:abelian}, we analyse conditions under which we can find a set of elements from a product of abelian groups whose sum avoids a finite set~$\Omega_i$ in each coordinate. 
In Section~\ref{sec:handles2cycles}, we discuss how to combine the disjoint sets of paths into cycles using the wall. 
This allows us to find a half-integral packing of~$k$ cycles whose $\gamma_i$-values are in~${\Gamma_i \setminus \Omega_i}$ for every factor~$\Gamma_i$ of~$\Gamma$ for which the $N^{W}$-paths in~$W$ are $\gamma_i$-zero. 

To deal with each remaining factor~$\Gamma_i$, we observe that since no large subwall of~$W$ is $\gamma_i$-bipartite, every large subwall of~$W$ contains a $\gamma_i$-non-zero cycle. 
In fact, we iteratively find disjoint cycles in~$W$ which are non-zero with respect to quotient groups defined in terms of the previously constructed cycles. 
We link these cycles up to the half-integral packing of $k$-cycles we have constructed, and by rerouting through them, transform each cycle in our half-integral packing into a cycle in~$\mathcal{O}$. 

\subsection{%
\texorpdfstring{Obstructions for integral and half-integral Erd\H{o}s-P\'{o}sa type results}%
{Obstructions for integral and half-integral Erdos-Posa type results}}
\label{subsec:obstructionsEP}

One fundamental obstruction to Erd\H{o}s-P\'{o}sa type results, known as the Escher wall, is due to Lov\'{a}sz and Schrijver~(see~\cite{Thomassen1988}). 
An \emph{Escher wall of height~$n$} is obtained from an ${(n, n)}$-wall~$W$ by adding a family~${(P_i \colon i \in [n])}$ of vertex-disjoint $W$-paths, such that for each~${i \in [n]}$, one endvertex of~$P_i$ is in~${R^W_1 \cap C^W_i}$ and the other is in~${R^W_n \cap C^W_{n+1-i}}$. See Figure~\ref{fig:escher} for an illustration.
They observed that there are no two disjoint cycles in such an Escher wall which each contain an odd number of paths in~${(P_i \colon i \in [n])}$, 
but for any set~$S$ of at most~${n-1}$ vertices, there is a cycle of the Escher Wall containing exactly one path in~${(P_i \colon i \in [n])}$.
Using this construction, Thomassen~\cite{Thomassen1988} argued that an analogue of the Erd\H{o}s-P\'{o}sa theorem does not hold for odd cycles. 
It can be further used to show that the same holds for cycles of length $\ell$ modulo $m$ whenever $m$ is an even positive integer and $\ell$ is an odd integer with $0<\ell<m$, see Wollan~\cite{Wollan2011}. 

\begin{figure}
    \centering
    \begin{tikzpicture}[scale=.6]
        \tikzstyle{w}=[circle,draw,fill=black!50,inner sep=0pt,minimum width=3pt]
        \useasboundingbox (0,-1) rectangle (7.5,4);
        \foreach \x in {0,...,13}{
            \ifthenelse{\x>0}{
                \pgfmathtruncatemacro{\start}{0};
            }{
                \pgfmathtruncatemacro{\start}{1};
            }
            \ifthenelse{\x<13}{
                \pgfmathtruncatemacro{\t}{6};
            }{
                \pgfmathtruncatemacro{\t}{5};
            }
            \foreach \y in {\start,...,\t} {
                \node at (\x/2,\y/2) [w] (v\x\y) {};
            }
        }        
        \foreach \x in {0,1,...,13}{
            \ifthenelse{\x>0}{
                \pgfmathtruncatemacro{\start}{0};
            }{
                \pgfmathtruncatemacro{\start}{1};
            }
            \ifthenelse{\x<13}{
                \pgfmathtruncatemacro{\t}{6};
            }{
                \pgfmathtruncatemacro{\t}{5};
            }          
            \foreach \y in {\start,...,\t} {
                \pgfmathtruncatemacro{\nextx}{\x+1}
                \ifthenelse{\x<12}{ 
                    \draw [gray](v\x\y)--(v\nextx\y);
                }{}
                \ifthenelse{\x=12 \and \y<6}{ 
                    \draw [gray](v\x\y)--(v\nextx\y);
                }{}                
                \pgfmathtruncatemacro{\nexty}{\y+1}
                \pgfmathtruncatemacro{\sum}{\x+\y}
                \ifthenelse{\isodd{\sum}\and \y<6}{
                    \draw [gray](v\x\y)--(v\x\nexty);
                }{}
            }
        }
        \draw [out=20](v06) .. controls (11,6) and (11,-3) .. (v120);
        \draw [out=20](v26) .. controls (11,6) and (11,-3) .. (v100);
        \draw [out=20](v56).. controls (11,6) and (11,-3) ..(v80);
        \draw [out=20](v66) .. controls (11,6) and (11,-3) .. (v70);
        \draw [out=20](v96) .. controls (11,6) and (11,-3) .. (v40);
        \draw [out=20](v116).. controls (11,6) and (11,-3) .. (v20);
        \draw [out=20](v126) .. controls (11,6) and (11,-3) .. (v10);
    \end{tikzpicture}
    \caption{An Escher wall of height~$7$.}
    \label{fig:escher}
\end{figure}
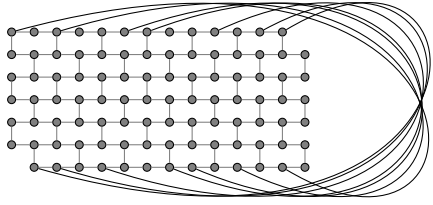

Many previous half-integral Erd\H{o}s-P\'{o}sa type results have relied on characterising Escher walls as the fundamental obstructions to finding integral packings of certain classes of cycles. 
Half-integral packings are then obtained using the structure of the Escher Wall. 
In our setting, the Escher Wall is not the only possible obstruction which can arise. 
In fact, unlike Escher walls, the following type of obstruction can occur even in planar graphs. 
For this reason, some of the structural results which have been used to obtain other half-integral Erd\H{o}s-P\'{o}sa type results are unlikely to be useful in our setting. 
For example, we do not use the flat wall theorem of Robertson and Seymour~\cite{RobertsonS95, RobertsonS1986} in this paper, and there is no obvious way to significantly simplify our proofs by doing so. 

\begin{proposition}
    \label{prop:obstruction2}
	Let~$G$ be the ${n \times n}$-grid, and let~$\mathcal{O}$ be the set of cycles of~$G$ 
	which contain at least one edge of the top row~$R_t$ of~$G$, 
	at least one edge of the bottom row~$R_b$ of~$G$, 
	and at least one edge from the leftmost column~$C_{\ell}$ of~$G$, 
	and let~$\mathcal{O}'$ be the set of cycles of~$G$ 
	which contain exactly one edge of the top row~$R_t$ of~$G$, 
	exactly one edge of the bottom row~$R_b$ of~$G$, 
	and exactly one edge from the leftmost column~$C_{\ell}$ of~$G$. 
	Then every pair of cycles in~$\mathcal{O}$ intersect, but there is no hitting set for~$\mathcal{O}'$ of size less than~${(n-1)/2}$.
\end{proposition}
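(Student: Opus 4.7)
The plan has two independent pieces, one for each assertion of the proposition.

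For the first assertion (that any two cycles in~$\mathcal{O}$ share a vertex), I would argue by contradiction using planarity. Suppose $C_1, C_2 \in \mathcal{O}$ are vertex-disjoint, and view~$G$ in its standard planar embedding with outer boundary $B := R_t \cup C_r \cup R_b \cup C_\ell$. Regarded as a Jordan curve, $C_1$ splits the plane into an interior and an exterior; every vertex of $V(B) \setminus V(C_1)$ lies in the exterior of~$C_1$, because locally it meets the unbounded face of~$G$, which itself lies in the exterior of~$C_1$. Hence $V(C_2)$ lies wholly in the exterior of~$C_1$. Now the ``chords'' of~$C_1$ (its maximal subpaths not contained in~$B$) subdivide the exterior of~$C_1$ inside the grid rectangle into regions, each bounded by exactly one chord of~$C_1$ together with one arc of $B \setminus V(C_1)$; consequently the component of $G - V(C_1)$ containing~$C_2$ has its $B$-vertices confined to a single such arc. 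But $C_2 \in \mathcal{O}$ forces this arc to contain vertices on all three of~$R_t$, $R_b$, $C_\ell$, and the only subpaths of~$B$ meeting all three sides (the alternative of going via~$C_r$ misses $C_\ell$ entirely) traverse every interior vertex of~$C_\ell$. Since $C_1$ itself uses an edge of~$C_\ell$, at least one interior $C_\ell$-vertex lies in $V(C_1)$, contradicting that the arc is disjoint from~$V(C_1)$.

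For the second assertion (no hitting set of size less than $(n-1)/2$), the plan is, given $S \subseteq V(G)$ with $|S| < (n-1)/2$, to construct directly a cycle in~$\mathcal{O}'$ avoiding~$S$ by a double pigeonhole. Since $|S| < (n-1)/2$, among the $n-1$ edges of~$C_\ell$ at least $n-1-2|S| \geq 1$ have both endpoints outside~$S$; pick such an edge $e_\ell = (1,k)-(1,k+1)$ with $k \in [2,n-2]$ (boundary values can be absorbed by a direct case analysis). Similarly, the $n-1$ adjacent pairs of columns contain at least $n-1-2|S| > 0$ pairs $(c,c+1)$ both free of~$S$, giving a choice of~$R_t$-edge $(c,1)-(c+1,1)$ and~$R_b$-edge $(c,n)-(c+1,n)$. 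The canonical ``ladder'' cycle built from these three edges (descending column $c+1$, crossing $R_b$, ascending column~$c$, then L-shaped detours through rows $k$ and $k+1$ to reach $e_\ell$) lies in~$\mathcal{O}'$; its vertices consist of columns~$c$ and $c+1$ together with the rows $k,k+1$ at columns $1,\dots,c-1$. A final counting step chooses~$k$ among the many free $C_\ell$-edges so that rows~$k$ and $k+1$ also avoid~$S$ on those left-hand columns.

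The main obstacle is squeezing the exact bound $(n-1)/2$ out of the pigeonhole: a naive argument only yields something like $(n-3)/2$ or $(n-3)/4$, so the choice of $c$ and $k$ must be coupled and the corner cases ($c=1$, $k=1$, $k=n-1$) handled using the freedom of the construction (in particular, shifting to an alternative cycle shape that passes through $(1,1)$ or $(1,n)$ when the canonical one is blocked).
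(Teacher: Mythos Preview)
Your approach to the first assertion is a legitimate alternative to the paper's. The paper adds an auxiliary vertex~$z$ adjacent to every vertex of $R_t \cup R_b \cup C_\ell$, observes the resulting graph~$G'$ is still planar, and then shows that two disjoint cycles in~$\mathcal{O}$ would yield a $K_5$-minor in~$G'$ (contract one cycle to a triangle through a vertex on each of the three sides, contract the component containing the other cycle to a point, and use~$z$ as the fifth vertex). Your direct Jordan-curve argument reaches the same conclusion, but the parenthetical ``the alternative of going via~$C_r$ misses~$C_\ell$ entirely'' is not quite right: an arc of $B - V(C_1)$ can pass through all of~$C_r$ and still reach~$C_\ell$. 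The correct reason this case fails is that such an arc would then contain all of~$R_t$ or all of~$R_b$, contradicting that $C_1$ has vertices on each. You should also note that $C_2$ needs \emph{edges} (not just vertices) of all three sides in that arc, which is what forces the arc to span between non-adjacent sides. With these fixes the argument goes through; the paper's $K_5$-minor trick is cleaner but yours is more elementary.

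For the second assertion you are making life much harder than necessary, and this is why you cannot get the exact bound. The paper's argument is a single clean double pigeonhole: since $2|S| < n-1$, there exist two adjacent \emph{columns} $C_i, C_{i+1}$ entirely disjoint from~$S$, and independently two adjacent \emph{rows} $R_j, R_{j+1}$ entirely disjoint from~$S$. The induced subgraph on $V(C_i \cup C_{i+1} \cup R_j \cup R_{j+1})$ contains a cycle in~$\mathcal{O}'$ (go down~$C_{i+1}$, across the $R_b$-edge, up~$C_i$ to row~$j+1$, left along~$R_{j+1}$ to~$C_\ell$, take the $C_\ell$-edge, right along~$R_j$ back to~$C_i$, up~$C_i$, across the $R_t$-edge). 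Your mistake is to focus on choosing a free \emph{edge} of~$C_\ell$ and then separately worrying about the detour rows; the point is that choosing two entirely free adjacent rows gives you both the $C_\ell$-edge and the detour simultaneously, and the two pigeonholes are genuinely independent. There is no coupling needed and no corner-case analysis.
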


\begin{proof}
	We may assume that~${n \geq 3}$. 
	Note that the graph~$G'$ obtained from~$G$ by adding a vertex~$z$ adjacent to every vertex of~${R_t \cup R_b \cup C_{\ell}}$ is planar. 
	Suppose for contradiction that there are disjoint cycles~$O_1$ and~$O_2$ in~$\mathcal{O}$, 
	and let~$H$ be the component of~${G-O_1}$ containing~$O_2$. 
	Since~$C_{\ell}$ is connected~$O_1$ intersects~${C_{\ell} - (R_t \cup R_b)}$, there is a vertex~$v_1$ in~${(O_1 \cap C_{\ell}) - (R_t \cup R_b)}$ adjacent to a vertex of~$H$, 
	and similarly a vertex~$v_2$ in~${O_1 \cap R_t}$ adjacent to a vertex of~$H$ and a vertex~$v_2$ in~${O_1 \cap R_t}$ adjacent to a vertex of~$H$. 
	Contracting~$C_1$ to a triangle on~${\{v_1,v_2,v_3\}}$ and~$H$ to a single vertex, we find a~$K_5$-minor in~$G'$, contradicting Wagner's Theorem. 
	
	Now consider~${S \subseteq V(G)}$ of size less than~${(n-1)/2}$. 
	Note that there are two adjacent columns~$C_i$ and~$C_{i+1}$ which do not intersect~$S$, 
	and likewise two adjacent rows~$R_j$ and~$R_{j+1}$ which do not intersect~$S$. 
	It is easy to see that the subgraph of~$G$ induced on the vertices in~${C_i \cup C_{i+1} \cup R_j \cup R_{j+1}}$ contains a cycle in~$\mathcal{O}'$. 
\end{proof}

As an example, consider an~${n \times n}$-grid in which all edges on the top row are subdivided exactly~$14$ times, 
all edges of the bottom row are subdivided exactly~$69$ times, 
all edges of the leftmost column are subdivided exactly~$20$ times, 
and all other edges are subdivided exactly~$104$ times. 
By Proposition~\ref{prop:obstruction2}, there are no two vertex-disjoint cycles of length~${1 \mod 105}$, 
and no hitting set for these cycles of size less than~${(n-1)/2}$. 

Another consequence of Proposition~\ref{prop:obstruction2} is that an analogue of the Erd\H{o}s-P\'osa theorem does not hold for cycles intersecting three prescribed vertex sets. 
To see this, consider an~${n \times n}$-grid, 
let~$S_1$ be the set of all vertices on the top row, 
let~$S_2$ be the set of all vertices of the bottom row, 
and let~$S_3$ be the set of all vertices of the leftmost column. 
By Proposition~\ref{prop:obstruction2}, there are no two vertex-disjoint cycles each containing at least one vertex from each of~$S_1$,~$S_2$,~$S_3$, and no hitting set for these cycles of size less than~${(n-1)/2}$.

\medskip

Our main theorem demonstrates that it is not easy to find non-trivial obstructions for half-integral Erd\H{o}s-P\'{o}sa type results. 
However, the following proposition does allow us to describe settings in which even a half-integral Erd\H{o}s-P\'{o}sa type result is not possible. 

\begin{proposition}
    \label{prop:counterexample-for-half-integral}
	Let~$t$ and~$c$ be positive integers, let~$\Gamma$ be an abelian group and let~$\Omega$ be a subset of~$\Gamma$ such that there is an element~${g \in \Gamma}$ and an integer~${d > (c-1)t}$ such that~$d$ is the minimum integer greater than~$2$ for which~${dg \notin \Omega}$. 
	Then there is a graph $G$ with $\Gamma$-labelling $\gamma$ such that, for the set~$\mathcal{O}$ of all cycles of~$G$ whose $\gamma$-values are not in~${\Omega}$, every~$c$ cycles in~$\mathcal{O}$ share a common vertex, but there is no hitting set for~$\mathcal{O}$ of size less than~$t$. 
\end{proposition}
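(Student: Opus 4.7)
The plan is to prove Proposition~\ref{prop:counterexample-for-half-integral} by explicit construction of a graph $G$ with a $\Gamma$-labelling $\gamma$ witnessing both required properties of $\mathcal{O}$.

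For the easy subcase $t \leq c - 1$, I will take $G$ to be the disjoint union of $t$ cycles of length $d$, with every edge labelled $g$. Every component is itself a cycle of length $d > 2$ with $\gamma$-value $dg \notin \Omega$, and contains no further cycles, so $\mathcal{O}$ consists of exactly these $t$ pairwise vertex-disjoint cycles. The minimum hitting set then has size at least $t$, while the condition ``every $c$ cycles share a common vertex'' is vacuously true since $|\mathcal{O}| = t < c$.

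For the subcase $t \geq c$, I plan to build $G$ by taking a base cycle $C_d = v_0 v_1 \cdots v_{d-1} v_0$ with each edge labelled $g$ and augmenting it with additional edges or paths chosen so that every cycle in the resulting graph has $\gamma$-value either equal to $dg$ or lying in $\{3g, 4g, \ldots, (d-1)g\}$, which is contained in $\Omega$ by the minimality of $d$. In particular, chords will be labelled so that the algebraic identity (chord value) + (arc value) $= dg$ holds for every chord-and-arc cycle. Thus $\mathcal{O}$ will consist of exactly those cycles of $\gamma$-value $dg$. The augmentation will be designed so that (a) any $c$ cycles in $\mathcal{O}$ share a common vertex, by an argument using $d > (c-1)t$ to show that $c$ such cycles cannot collectively skip all $d$ base-cycle vertices, and (b) no fewer than $t$ vertices suffice to hit all cycles in $\mathcal{O}$, since for each subset of at most $t - 1$ vertices one can exhibit a cycle in $\mathcal{O}$ avoiding it.

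The main obstacle I anticipate lies in the subcase $t \geq c$, where properties (A) and (B) pull in opposite directions: the augmentation must be rich enough to avoid a small hitting set, but constrained enough to force $c$-wise intersection. A naive ``cycle with chords'' construction already fails because chord endpoints lie on every resulting long sub-cycle, collapsing the minimum hitting set to~$1$. Overcoming this will require a more delicate construction in which the ``shared'' vertices of the cycles in $\mathcal{O}$ are spread across many positions of $C_d$, and the full strength of the inequality $d > (c-1)t$ will be used both to create enough slack for property (B) and to force all unintended cycles to have $\gamma$-values in $\Omega$. I anticipate that the verification will rest on a careful case analysis of cycles in the augmented graph, together with repeated invocation of the minimality of $d$ in the definition of $\Omega$ to dispose of every cycle whose $\gamma$-value is not $dg$.
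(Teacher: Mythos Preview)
Your proposal has a genuine gap in the case $t \geq c$: you only describe what a construction based on augmenting a single $d$-cycle would need to accomplish, explicitly acknowledge that the naive chord construction collapses the minimum hitting set to~$1$, and never actually produce a graph and labelling. A plan of the form ``the augmentation will be designed so that (a) and (b) hold'' followed by ``I anticipate that the verification will rest on a careful case analysis'' is not a proof; the tension you yourself identify between properties (a) and (b) is real, and nothing in the proposal indicates how to resolve it. Your easy subcase $t \leq c-1$ is fine, but that is precisely the case in which the $c$-wise intersection condition is vacuous.

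The paper's proof avoids this difficulty with a single, much simpler construction that works uniformly for all $c \geq 2$ (the case $c=1$ being trivial): take $G = K_n$ with $n = \lceil cd/(c-1)\rceil - 1$ and label every edge by~$g$. Every cycle in $\mathcal{O}$ then has length at least $d > (c-1)n/c$, so any $c$ cycles in $\mathcal{O}$ together cover more than $(c-1)n$ vertices counted with multiplicity and hence share a common vertex by pigeonhole. On the other hand, removing fewer than $n - d + 1 \geq d/(c-1) > t$ vertices leaves a $K_d$ and thus a length-$d$ cycle in $\mathcal{O}$. No case split on $t$ versus $c$ is needed, and there is no delicate augmentation to verify. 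I would abandon the $C_d$-plus-chords plan in favour of this.
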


\begin{proof}
    The~${c = 1}$ case is trivial, so we may assume~${c \geq 2}$. 
    Let~${n := \lceil cd/(c-1) \rceil - 1}$, 
    let~${G := K_n}$, 
    and let~$\gamma$ be the $\Gamma$-labelling assigning~$g$ to every edge of~$G$. 
    By construction, every cycle in~$\mathcal{O}$ has length greater than~${(c-1)n/c}$, and so every~$c$ cycles in~$\mathcal{O}$ share a common vertex. 
    However, every cycle of length~$d$ in~$G$ is in~$\mathcal{O}$, so the smallest hitting set for~$\mathcal{O}$ has size~${n-(d-1) > t}$. 
\end{proof}

As an example, consider an infinite abelian group which contains arbitrarily large finite cyclic subgroups. 
One consequence of Proposition~\ref{prop:counterexample-for-half-integral} is that there is no half-integral Erd\H{o}s-P\'{o}sa type result for cycles of weight zero in graphs labelled with such a group. 

\medskip

As another consequence, we obtain a lower bound on the functions mentioned in Theorem~\ref{thm:main} which depends on both~$m$ and~$\omega$. 

\begin{corollary}
	For any function~${f_{m, \omega}}$ as in Theorem~\ref{thm:main}, we have~${f_{m,\omega}(3) > (2+ m \omega )/ 2}$. 
\end{corollary}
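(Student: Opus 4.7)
The plan is to adapt the construction in Proposition~\ref{prop:counterexample-for-half-integral} with $c = 3$, using all $m$ group labellings simultaneously to block every cycle length up to $m\omega+2$. Concretely, I would take $\Gamma_i := \mathbb{Z}$ for each $i \in [m]$, partition the interval $\{3, 4, \ldots, m\omega + 2\}$ into $m$ blocks of size $\omega$ by setting
\[
    \Omega_i := \{(i-1)\omega + 3,\, (i-1)\omega + 4,\, \ldots,\, i\omega + 2\},
\]
and put $\gamma_i(e) := 1$ for every edge $e$ and every $i \in [m]$. Writing $d := m\omega + 3$, I would take $G := K_n$ where $n$ is the largest integer strictly less than $3d/2$.

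Under this setup, a cycle $C$ of $G$ satisfies $\gamma_i(C) = \abs{C}$ for each $i$, so $C \in \mathcal{O}$ iff $\abs{C} \notin \bigcup_{i \in [m]} \Omega_i = \{3, \ldots, d-1\}$; equivalently, $\abs{C} \geq d$. Since $d > 2n/3$, every cycle in $\mathcal{O}$ has length strictly greater than $2n/3$. A half-integral packing of three cycles in $\mathcal{O}$ would then account for more than $3 \cdot (2n/3) = 2n$ vertex-incidences, while every vertex may be used at most twice; this contradiction shows no such packing exists.

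To finish, I would invoke Theorem~\ref{thm:main} to produce a hitting set for $\mathcal{O}$ of size at most $f_{m,\omega}(3)$. Since $K_n$ contains a cycle of length exactly $d$ whenever $n \geq d$ (which does hold, as $\lceil 3d/2 \rceil - 1 \geq d$ for $d \geq 2$), and every such cycle lies in $\mathcal{O}$, any hitting set must leave fewer than $d$ vertices of $G$ intact, and therefore has size at least $n - d + 1$. A parity-by-parity check of $n = \lceil 3d/2 \rceil - 1$ gives $n - d + 1 \in \{d/2,\, (d+1)/2\}$, both of which strictly exceed $(d-1)/2 = (m\omega + 2)/2$, yielding the claimed inequality $f_{m,\omega}(3) > (2 + m\omega)/2$.

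The only real choice is how to combine the $m$ labellings so as to force the minimum length of a cycle in $\mathcal{O}$ to grow linearly in $m\omega$. Routing each $\gamma_i$ through the constant labelling $1$ in $\mathbb{Z}$ converts the group constraints into length constraints, after which $m$ disjoint blocks of size $\omega$ suffice to block $m\omega$ consecutive integer lengths; the rest is the same pigeonhole arithmetic already used in Proposition~\ref{prop:counterexample-for-half-integral}.
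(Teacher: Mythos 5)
Your proof is correct and uses exactly the same construction as the paper: the group $\mathbb{Z}^m$ with constant labelling $1$ in each coordinate, the sets $\Omega_i$ covering the interval $\{3,\ldots,m\omega+2\}$ in blocks of size $\omega$, and $G = K_n$ with $n = \lceil 3d/2\rceil - 1$ for $d = m\omega+3$. The only difference is presentational: the paper cites Proposition~\ref{prop:counterexample-for-half-integral} to obtain both the non-existence of a half-integral packing of three cycles and the lower bound $n-d+1$ on the hitting set, while you unfold that proposition's argument inline and carry out the parity check explicitly.
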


\begin{proof}
    Consider for each~${i \in [m]}$ the group~${\Gamma_i := \mathbb{Z}}$ and the subset~${\Omega_i := [2+\omega i] \setminus [2+\omega (i-1)]}$. 
    The result follows 
    with~${c = 3}$,~${g = ( 1 \colon i \in [m])}$, 
    and~${\Omega = \bigcup_{i\in [m]} \{ g' \in \Gamma \colon \pi_i(g') \in \Omega_i \}}$ from Proposition~\ref{prop:counterexample-for-half-integral}. 
\end{proof}

\subsection{Relating vertex-labellings to edge-labellings}
\label{subsec:vxlabel}

We now demonstrate how to convert a group labelling of the vertices of a graph to a group labelling of its edges. 
Corollary~\ref{cor:vxlabelling} follows easily from Theorem~\ref{thm:main} after applying the following lemma to each of the vertex-labellings it mentions. 

\begin{lemma}
    Let~$\Gamma$ be an abelian group, let~$\Omega \subseteq \Gamma$ be a finite subset, let~$G$ be a graph, and let~$\gamma$ be a $\Gamma$-vertex-labelling of~$G$. 
    Then there is a group~$\Gamma'$, a subset~$\Omega' \subseteq \Gamma'$ with~${\abs{\Omega'} = \abs{\Omega}}$, and a $\Gamma'$-labelling~$\gamma'$ of~$G$ such that for every cycle~$O$ of~$G$, we have 
    ${\gamma(O) \in \Omega}$ if and only if~${\gamma'(O) \in \Omega'}$. 
\end{lemma}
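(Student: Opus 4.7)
The plan is to generalize the construction used in the proof of Corollary~\ref{cor:S_i-intersecting}: convert the vertex labelling into an edge labelling by summing the labels of the endpoints. Concretely, I would set $\Gamma' := \Gamma$ and, for every edge $uv \in E(G)$, define
\[
    \gamma'(uv) \;:=\; \gamma(u) + \gamma(v).
\]

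The key calculation is that every vertex of any cycle $O$ has degree exactly $2$ in $O$, so a direct double-count gives
\[
    \gamma'(O)
    \;=\; \sum_{uv \in E(O)}\bigl(\gamma(u)+\gamma(v)\bigr)
    \;=\; 2\sum_{v \in V(O)}\gamma(v)
    \;=\; 2\gamma(O)
\]
for every cycle $O$ of $G$. Setting $\Omega' := \{\,2\omega : \omega \in \Omega\,\} \subseteq \Gamma' = \Gamma$, the forward direction of the biconditional, $\gamma(O) \in \Omega \Rightarrow \gamma'(O) = 2\gamma(O) \in \Omega'$, follows immediately.

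The main obstacle is establishing the reverse direction together with the cardinality equality $\abs{\Omega'} = \abs{\Omega}$. Both come down to the injectivity of the doubling map $g \mapsto 2g$ on a finite subset of $\Gamma$ containing $\Omega$ and all possible cycle values $\{\gamma(O) : O \textnormal{ a cycle of } G\}$. When $\Gamma$ has no element of order $2$, this injectivity is automatic, and in particular this already covers the $\mathbb{Z}$-valued setting of Corollary~\ref{cor:S_i-intersecting}. For the general case, my plan is to first preprocess by enlarging $\Gamma$ to an abelian group $\widehat\Gamma$ and replacing $\gamma$ by a suitable lifted vertex-labelling $\widehat\gamma \colon V(G) \to \widehat\Gamma$ in which all potentially colliding elements of $\Omega$ and of the cycle-value set are separated into distinct cosets modulo $2\widehat\Gamma$---for instance by taking $\widehat\Gamma := \Gamma \oplus A$ for a torsion-free abelian group $A$ of sufficient rank. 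Applying the edge-sum construction within $\widehat\Gamma$ and setting $\Omega'$ to be the image of $\Omega$ under doubling then yields a $\Gamma'$-labelling $\gamma'$ of $G$ with $\abs{\Omega'} = \abs{\Omega}$ satisfying both directions of the biconditional.
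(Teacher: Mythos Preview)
Your edge-sum construction $\gamma'(uv) := \gamma(u)+\gamma(v)$ and the calculation $\gamma'(O) = 2\gamma(O)$ are exactly right, and you have correctly located the obstacle in the $2$-torsion of $\Gamma$. But the proposed fix---adjoining a torsion-free summand $A$ and lifting the vertex labels---does not work. If you set $\widehat\gamma(v) = (\gamma(v), a_v) \in \Gamma \oplus A$, then $\widehat\gamma(O) = \bigl(\gamma(O),\ \sum_{v \in V(O)} a_v\bigr)$, and the second coordinate varies across cycles sharing the same $\gamma$-value. So for a fixed $\omega \in \Omega$ there is no single lift $\widehat\omega$ that every cycle with $\gamma(O)=\omega$ will hit, and the equivalence $\gamma(O) \in \Omega \Leftrightarrow \widehat\gamma(O) \in \widehat\Omega$ (hence $\gamma(O)\in\Omega \Leftrightarrow \gamma'(O)\in\Omega'$) breaks down. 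Forcing coherence by making the lift $g \mapsto \widehat g$ a homomorphism on the subgroup generated by the vertex labels defeats the purpose: a homomorphism sends $2$-torsion to $2$-torsion, and since $A$ is torsion-free the $2$-torsion of $\Gamma \oplus A$ is exactly that of $\Gamma$, so doubling is no more injective than before.

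What is actually needed is an injective homomorphism $\psi$ from the finitely generated subgroup $\Gamma'' := \langle \Omega \cup \gamma(V(G))\rangle$ into some $\Gamma'$ with $\psi(\Gamma'') \subseteq 2\Gamma'$. Given such a $\psi$, pick for each $g\in\Gamma''$ any $\overline g \in \Gamma'$ with $2\overline g = \psi(g)$, set $\gamma'(uv) := \overline{\gamma(u)}+\overline{\gamma(v)}$ and $\Omega' := \psi(\Omega)$; then $\gamma'(O) = \sum_{v \in V(O)} 2\,\overline{\gamma(v)} = \sum_{v\in V(O)} \psi(\gamma(v)) = \psi(\gamma(O))$, and injectivity of $\psi$ gives both the biconditional and $\lvert\Omega'\rvert=\lvert\Omega\rvert$. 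The paper builds such a $\psi$ via the structure theorem: write $\Gamma'' \cong \prod_i \mathbb{Z}_{n_i} \times \mathbb{Z}^r$, take $\Gamma' := \prod_i \mathbb{Z}_{2n_i} \times \mathbb{Z}^r$, and send each generator to twice the corresponding generator of $\Gamma'$. The point is that one must modify the \emph{torsion} part of the group (doubling the orders of cyclic factors), not add a torsion-free summand: if $\Gamma''$ has nonzero $2$-torsion then $\psi$ must hit a nonzero $2$-torsion element lying in $2\Gamma'$, and $\Gamma \oplus A$ with $A$ torsion-free has no such element when, for instance, $\Gamma = \mathbb{Z}_2$.
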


\begin{proof}
    Let~${\Gamma'' = \gen{ \Omega \cup \{\gamma(v) \colon v \in V(G)\}}}$. 
    By the fundamental theorem of finitely generated abelian groups, there exist an integer~$m$ and an isomorphism~$\varphi$ from~$\Gamma''$ to a product~${\prod_{i \in [m]} \Gamma_i}$, where each~$\Gamma_i$ is either~$\mathbb{Z}$, or a cyclic group~$\mathbb{Z}_{n_i}$ of order~$n_i$. 
    For~${i \in [m]}$, let~${\Gamma_i' := \mathbb{Z}_{2n_i}}$ if the order of~$\Gamma_i$ is finite, and~${\Gamma_i' := \Gamma_i}$ otherwise. 
    Let~${\Gamma' := \prod_{i \in [m]} \Gamma_i'}$. 
    For~${i \in [m]}$, let~$e_i$ denote the element of~$\Gamma''$ such that~${\pi_j(\varphi(e_i)) = 1}$ if~${i = j}$, and~${\pi_j(\varphi(e_i)) = 0}$ if~${i \neq j}$. 
    Then~$\Gamma'' = \gen{ e_i \colon i \in [r']}$. 
    For~${i \in [m]}$, let~$e_i'$ denote the element of~$\Gamma'$ such that~${\pi_j(e_i') = 1}$ if~${i = j}$, and~${\pi_j(e_i') = 0}$ if~${i \neq j}$. 
    For each~${j \in [2]}$, we define a homomorphism~$\psi_j$ from~$\Gamma''$ to~$\Gamma'$ by setting~${\psi_j(e_i) := j e_i'}$ on the generators. 
    Note that~$\psi_2$ is injective since the kernel of~$\psi_2$ is trivial, 
    that the image of~$\psi_2$ is~${2 \Gamma'}$, 
    and that~${\psi_1(2g) = \psi_2(g)}$ for all~${g \in \Gamma''}$. 
    We define~${\Omega' := \psi_2(\Omega)}$ and a $\Gamma'$-labelling~$\gamma'$ of~$G$ for an edge~$e = vw$ of~$G$ by setting~${\gamma'(e) = \psi_1\big(\gamma(v) + \gamma(w)\big)}$. 
    Note that for a cycle~$O$ of~$G$, we have that
    \[
        \gamma'(O) 
        = \sum_{e \in E(O)} \gamma'(e) 
        = \sum_{vw \in E(O)} \psi_1( \gamma(v) + \gamma(w) )
        = \psi_1 \left( 2 \sum_{v \in V(O)} \gamma(v) \right)
        = \psi_2 ( \gamma(O) ). 
    \]
    Hence, the result follows from the injectivity of~$\psi_2$. 
\end{proof}

\subsection{Graphs embedded on a surface}
\label{subsec:surface}

We now discuss how our result applies to graphs embedded on a surface, where we consider the first homology group with coefficients in~$\mathbb{Z}_2$. 
Huynh, Joos, and Wollan~\cite[Proposition 5]{HuynhJW2017} demonstrated that given a graph~$G$ embedded on a surface whose $\mathbb{Z}$-homology group is~$\Gamma$, there is a directed $\Gamma$-labelling of~$G$ so that the set of cycles in~$G$ that are homologous to zero is exactly the set of cycles having group value~$0$ in the labelling. 
This allowed them to obtain for graphs embedded on a surface a half-integral Erd\H{o}s-P\'{o}sa result for the non-null-homologous cycles of the embedding. 
Our result works in essentially the same way. 

A graph~$H$ is called \emph{even} if every vertex of~$H$ has even degree. 
For a graph~$G$, let~$\mathcal{C}(G)$ denote the \emph{cycle space} of~$G$ over~$\mathbb{Z}_2$, that is the vector space of all even subgraphs~$H$ of~$G$ with the symmetric difference as the operation. 

\begin{proposition}
    \label{prop:cyclespace-hom-to-gamma}
    Let~$G$ be a graph, let~$\Gamma$ be an abelian group, and let~${\phi \colon \mathcal{C}(G) \to \Gamma}$ be a group homomorphism. 
    Then there is a $\Gamma$-labelling~$\gamma$ of~$G$ such that~${\gamma(H) = \phi(H)}$ for every even subgraph~$H$ of~$G$. 
\end{proposition}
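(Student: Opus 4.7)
The plan is to use fundamental cycles with respect to a spanning forest. First I would fix a spanning forest~$T$ of~$G$. For each edge~${e \in E(G) \setminus E(T)}$, let~$C_e$ denote the fundamental cycle of~$e$, that is, the unique cycle contained in~${T + e}$. Then I would define the $\Gamma$-labelling~$\gamma$ by
\[
    \gamma(e) :=
    \begin{cases}
        0 & \textnormal{if } e \in E(T), \\
        \phi(C_e) & \textnormal{if } e \in E(G) \setminus E(T).
    \end{cases}
\]

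The key structural fact I would invoke is the following standard description of the cycle space: for every even subgraph~$H$ of~$G$, the symmetric difference~${H' := H \triangle \bigtriangleup_{e \in E(H) \setminus E(T)} C_e}$ is again an even subgraph (as a sum of even subgraphs in~${\mathcal{C}(G)}$), but by construction~${E(H') \subseteq E(T)}$. Since a forest contains no non-empty even subgraph, we have~${H' = \emptyset}$ and therefore
\[
    H = \bigtriangleup_{e \in E(H) \setminus E(T)} C_e
\]
in~$\mathcal{C}(G)$.

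Applying the group homomorphism~$\phi$ to this identity gives
\[
    \phi(H) = \sum_{e \in E(H) \setminus E(T)} \phi(C_e) = \sum_{e \in E(H) \setminus E(T)} \gamma(e),
\]
and since~${\gamma(e) = 0}$ for every~${e \in E(H) \cap E(T)}$, the right-hand side equals~$\gamma(H)$. This yields the desired equality~${\gamma(H) = \phi(H)}$ for every even subgraph~$H$.

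There is no serious obstacle here; the only non-trivial ingredient is the fundamental-cycle decomposition of the cycle space over~$\mathbb{Z}_2$, which follows from the observation that each non-tree edge~$e$ appears in~$C_e$ and in no other~$C_{e'}$.
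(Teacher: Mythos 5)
Your proof is correct and follows essentially the same route as the paper: both fix a spanning tree/forest, define $\gamma$ to be $0$ on tree edges and $\phi$ of the fundamental cycle on non-tree edges, and conclude via the fundamental-cycle basis of $\mathcal{C}(G)$. The only cosmetic difference is that the paper reduces to connected $G$ via a without-loss-of-generality step and cites the basis fact, whereas you use a spanning forest directly and spell out the decomposition argument.
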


\begin{proof}
    Without loss of generality, we may assume that~$G$ is connected. 
    Let~$T$ be a spanning tree of~$G$. 
    For each edge~${e \in E(G) \setminus E(T)}$, let~$C_{e,T}$ denote the unique cycle in~${T + e}$. 
    We define~${\gamma(e) := 0}$ for each~${e \in E(T)}$ and~${\gamma(e) := \phi(C_{e,T})}$ for each~${e \in E(G) \setminus E(T)}$. 
    The statement now trivially follows, because the set~$\{C_{e,T} \colon e \in E(G) \setminus E(T)\}$ forms a basis of the cycle space (see~\cite[Theorem~1.9.5]{Diestel5}). 
\end{proof}

Now for a graph~$G$ embedded in a surface~$\Sigma$, the map assigning each even subgraph its $\mathbb{Z}_2$-homology class is a group homomorphism from~$\mathcal{C}(G)$ to the $\mathbb{Z}_2$-homology group of~$\Sigma$. 
Hence, Corollary~\ref{cor:surface} follows with Proposition~\ref{prop:cyclespace-hom-to-gamma} from Theorem~\ref{thm:main}. 

Note that for a closed orientable surface, the set of simple closed curves homologous to zero for the $\mathbb{Z}_2$-homology is exactly the same as for the $\mathbb{Z}$-homology. 
This follows the universal coefficient theorem (see \cite{Hatcher02}), which allows us to relate the $\mathbb{Z}$-homology with the $\mathbb{Z}_2$-homology by taking all coefficients modulo~${2}$. 
We then apply a classical result which states that no simple closed curve has $\mathbb{Z}$-homology class~${kh}$ for any integer~${k \geq 2}$ and any non-zero element~$h$ of the $\mathbb{Z}$-homology (see for example \cite{Schafer1976}). 
Hence, in the case of graphs embedded on closed orientable surfaces, we recover the result of Huynh, Joos and Wollan for non-null-homologous cycles.

\section{Packing functions and hitting sets}
\label{sec:pack}

In this section, we introduce the concept of packing functions as a tool to generalise the ideas of both integral and half-integral packings of subgraphs, 
which enables us to discuss these and similar ideas in a unified way. 

For a function~$\nu$ from the set of subgraphs of a graph~$G$ to the set of non-negative integers, we say that 
\begin{itemize}
    \item $\nu$ is \emph{monotone} if~$\nu(H) \leq \nu(H')$ whenever~$H$ is a subgraph of~$H'$, 
    \item $\nu$ is \emph{additive} if~$\nu(H \cup H') = \nu(H) + \nu(H')$ whenever~$H$ and~$H'$ are vertex-disjoint subgraphs of $G$, and 
    \item $\nu$ is a \emph{packing function for~$G$} if it is monotone and additive.
\end{itemize}
Now let~${\nu}$ be a packing function for a graph~$G$.  
For a subgraph~${H \subseteq G}$, we say a set~${T \subseteq V(H)}$ is an \emph{$\nu$-hitting set for~$H$} if~${\nu(H - T) = 0}$. 
We define~$\tau_\nu(H)$ as the size of a smallest $\nu$-hitting set of~$H$. 
Note that in the traditional sense of the word, a $\nu$-hitting set of~$G$ is a hitting set for the minimal subgraphs~${H \subseteq G}$ for which~${\nu(H) \geq 1}$.  

For example, a function mapping a subgraph~$H$ of~$G$ to the maximum number of vertex-disjoint cycles in~$H$ is a packing function of~$G$. 

\medskip

The following lemma argues that if~${\nu(G)}$ is small but~$G$ has no small $\nu$-hitting set, then every minimum $\nu$-hitting set 
induces a tangle of large order. 
Similar arguments for specific packing functions appear many times in the literature, 
see~\cite{HuynhJW2017} and~\cite{ReedRST1996} for instance.  

\begin{lemma}
    \label{lem:welllinked}
    Let~$\nu$ be a packing function for a graph~$G$ 
    and let~${T \subseteq V(G)}$ be a minimum $\nu$-hitting set for~$G$ of size~${t}$. 
    Let~$\mathcal{T}_T$ be the set of all separations~${(A,B)}$ of~$G$
    of order less than~${t/6}$ such that~${\abs{B \cap T} > 5t/6}$. 
    If~${\tau_\nu(H) \leq t/12}$ whenever~$H$ is a subgraph of~$G$ with~${\nu(H) < \nu(G)}$, 
    then~$\mathcal{T}_T$ is a tangle of order~${\lceil t/6 \rceil}$. 
\end{lemma}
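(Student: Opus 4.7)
The plan is to verify both defining conditions of a tangle for~$\mathcal{T}_T$ at order~$\lceil t/6 \rceil$. Since every separation of~$G$ has integer order, ``order less than~$t/6$'' and ``order less than~$\lceil t/6 \rceil$'' coincide, so the order constraint is automatic. If~$t = 0$ then~$T = \emptyset$ forces~$\mathcal{T}_T = \emptyset$ and both conditions hold vacuously, so I may assume~$t \geq 1$, whence~$\nu(G) \geq 1$.

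For the first tangle condition, I fix a separation~${(A,B)}$ of~$G$ of order less than~$t/6$ and set~${S := A \cap B}$. The identity~${|A \cap T| + |B \cap T| = |T| + |S \cap T| < t + t/6}$ implies that the two summands cannot both exceed~$5t/6$, so at most one of~${(A,B)}$ and~${(B,A)}$ lies in~$\mathcal{T}_T$. For existence, I use that~${G - S}$ is the vertex-disjoint union of~${G[A \setminus B]}$ and~${G[B \setminus A]}$, so the additivity and monotonicity of~$\nu$ give
\[
    \nu(G[A \setminus B]) + \nu(G[B \setminus A]) = \nu(G - S) \leq \nu(G).
\]
If both summands were strictly smaller than~$\nu(G)$, the hypothesis would produce $\nu$-hitting sets of~${G[A \setminus B]}$ and~${G[B \setminus A]}$ of size at most~$t/12$ each; together with~$S$ they would form a $\nu$-hitting set of~$G$ of total size at most~${t/12 + t/12 + t/6 = t/3 < t}$, contradicting the minimality of~$T$. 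Hence, without loss of generality, ${\nu(G[A \setminus B]) = \nu(G)}$, which forces ${\nu(G[B \setminus A]) = 0}$. Then any minimum $\nu$-hitting set of~${G[A \setminus B]}$ together with~$S$ still hits~$G$, so~${\tau_\nu(G[A \setminus B]) \geq t - |S| > 5t/6}$; and since~${T \cap (A \setminus B)}$ is itself a $\nu$-hitting set of~${G[A \setminus B]}$, we conclude~${|A \cap T| \geq |T \cap (A \setminus B)| > 5t/6}$, i.e.\ ${(B,A) \in \mathcal{T}_T}$.

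For the second tangle condition, suppose~${(A_i, B_i) \in \mathcal{T}_T}$ for~${i \in \{1,2,3\}}$. Since~${|B_i \cap T| > 5t/6}$ we have~${|T \setminus B_i| < t/6}$, and since~${T \setminus B_i \subseteq A_i}$ together with~${|T \cap A_i \cap B_i| \leq |A_i \cap B_i| < t/6}$, this yields~${|T \cap A_i| < t/3}$ for each~$i$. Were~${G = G[A_1] \cup G[A_2] \cup G[A_3]}$, then~${T \subseteq A_1 \cup A_2 \cup A_3}$ would force~${t = |T| < 3 \cdot t/3 = t}$, a contradiction.

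The principal delicate point is the existence half of the first condition: one must couple the additive decomposition of~$\nu$ across~$S$ with the hypothesis (available only under the strict inequality~${\nu(H) < \nu(G)}$) and then invoke the minimality of~$T$ to pin down which side of the separation carries all the ``mass'' of~$\nu(G)$.
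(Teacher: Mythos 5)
Your proof is correct, and it takes a genuinely more direct route than the paper's. The paper first proves a standalone Claim: for any two disjoint subsets $X,Y\subseteq T$ of size at least $t/6$, there is a linkage of order $\abs{X}$ from $X$ to $Y$ avoiding $T\setminus(X\cup Y)$. That claim is proved by supposing otherwise, invoking Menger's theorem to extract a separation $(A',B')$, and then running essentially the same additivity/minimality/hypothesis argument you run. The first tangle axiom is then derived by contradiction: if $(A,B)$ with $\abs{B\cap T}\geq\abs{A\cap T}$ were not in $\mathcal{T}_T$, both $A\setminus B$ and $B\setminus A$ would contain at least $t/6$ vertices of $T$, and the Claim would force a linkage of order $\lceil t/6\rceil$ through the separator of order less than $t/6$, which is impossible. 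You bypass Menger and the linkage claim entirely by applying the additivity of $\nu$ across $S=A\cap B$ to the given separation directly: one side must carry the full value $\nu(G)$ (else two hitting sets of size $\leq t/12$ plus $S$ would beat $T$), and minimality of $T$ then forces $\tau_\nu$ of that side above $5t/6$, which pins down the $T$-mass on that side. Both proofs run on the same three ingredients---additivity of $\nu$, minimality of $T$, and the $\tau_\nu\leq t/12$ hypothesis---but your version applies them once in the open, whereas the paper wraps them inside a structural linkage statement. The paper's route yields a reusable connectivity fact at the cost of an extra appeal to Menger; yours is shorter and more elementary for the purpose of this lemma. The second tangle axiom is handled identically in both.
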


\begin{proof}
    First, we show the following claim. 
    
    \begin{claim*}
        Let~${X, Y \subseteq T}$ be disjoint sets with~${\abs{X} = \abs{Y} \geq t/6}$. 
        Then there is a linkage in~$G$ from~$X$ to~$Y$ of order~$\abs{X}$ containing no vertex in~${Z := T \setminus (X \cup Y)}$. 
    \end{claim*}
    
    \begin{proofofclaim}
        Suppose for a contradiction that there is no such linkage. 
        By Menger's theorem applied to~${G - Z}$, 
        there is a separation~${(A, B)}$ of~$G$ of order strictly less than~${\abs{X} + \abs{Z}}$ with~${Z \subseteq A \cap B}$, ${X \subseteq A}$, and ${Y \subseteq B}$. 
        Let~${S := A \cap B}$.
        Now observe that 
        \[
            {\nu(G - (A \cup T))+\nu(G - (B \cup T)) = \nu(G - (S \cup T)) \leq \nu(G - T) = 0}, 
        \] 
        and so~${\nu(G - (A \cup T)) = \nu(G - (B \cup T)) = 0}$. 
        Hence 
        \[
            {\nu(G - B) = \nu(G - B) + \nu(G - (A \cup T)) = \nu(G - B) + \nu(G - (A \cup Y)) = \nu(G - (S \cup Y))},
        \] 
        and so by the minimality of~$T$ and the fact that~${\abs{S \cup Y} \leq \abs{S} + \abs{Y} < \abs{X} + \abs{Z} + \abs{Y} = \abs{T}}$ we have that~${\nu(G - B) \geq 1}$. 
        By symmetry~${\nu(G - A) \geq 1}$, 
        and since~${\nu(G - A) + \nu(G - B) \leq \nu(G)}$, both~${\nu(G - A)}$ and~${\nu(G - B)}$ are strictly less than~${\nu(G)}$. 

        By the assumption, ${\tau_\nu(G - A), \tau_\nu(G - B) \leq t/12}$.
        Let~$T_{A}$ and~$T_{B}$ be $\nu$-hitting sets of minimum size for~${G-A}$ and~${G-B}$, respectively. 
        Then
        \[
            {\nu(G - (T_A \cup T_B \cup S)) = \nu(G - (T_{A} \cup A)) + \nu(G - (T_{A} \cup B)) = 0},
        \] 
        but
        ${\abs{T_A} + \abs{T_B} + \abs{S}
            \leq (t/6) + \abs{S}
            \leq \abs{Y} + \abs{S} 
            < \abs{T}}$, contradicting the assumption that~$T$ is a minimum $\nu$-hitting set.
    \end{proofofclaim}
    
    Let~${(A,B)}$ be a separation of order less than~${t/6}$ with~${\abs{B \cap T} \geq \abs{A \cap T}}$, and let~${S := A \cap B}$. 
    Clearly,~${(B,A) \notin \mathcal{T}_T}$. 
    Suppose for a contradiction that~${(A,B) \notin \mathcal{T}_T}$ and hence~${A \setminus B}$ contains at least~${t/6}$ vertices of~$T$. 
    Since~${B \setminus A}$ contains at least as many vertices of~$T$ as~${A \setminus B}$ does, by the claim there is a linkage of size~${\lceil t/6 \rceil}$ in~$G$ from~${A \setminus B}$ to~${B \setminus A}$, contradicting the assumption on the order of~${(A,B)}$.
    Hence, ${(A,B) \in \mathcal{T}_T}$.
    
    Note that~${\abs{T \cap A} < t/3}$ for each~${(A,B) \in \mathcal{T}_T}$. 
    Hence for~${(A_1,B_2), (A_2,B_2), (A_3,B_3) \in \mathcal{T}_T}$ we have that~${\abs{T \cap (A_1 \cup A_2 \cup A_3)} < t}$, and hence~${G[A_1] \cup G[A_2] \cup G[A_3] \neq G}$. 
    Thus we conclude that~$\mathcal{T}_T$ is a tangle of order~${\lceil t/6\rceil}$.
\end{proof}

Let us now turn our attention to packing functions~$\nu$ for a $\Gamma$-labelled graph~${(G,\gamma)}$ for an abelian group~$\Gamma$. 
The following lemma is useful for converting between $\gamma$-non-zero cycles and $\gamma$-non-zero paths.
We will appeal to it in the final lemma of this section, and again in Lemma~\ref{lem:breaking}.

\begin{lemma}
    \label{lem:non-zero-path-in-cycle}
    Let~$\Gamma$ be an abelian group, 
    let~${(G, \gamma)}$ be a $\Gamma$-labelled graph, 
    let~$O$ be a $\gamma$-non-zero cycle in~$G$, and 
    let~${T \subseteq V(G)}$.
    If~$G$ contains three vertex-disjoint ${(V(O),T)}$-paths~$P_1$, $P_2$, $P_3$, 
    then~${H := O \cup P_1\cup P_2\cup P_3}$ contains a $\gamma$-non-zero $T$-path. 
\end{lemma}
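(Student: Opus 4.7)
The plan is to produce a $\gamma$-non-zero $T$-path in $H$ by combining two of the $P_i$ with an arc of $O$ between their endpoints on $O$, and to show that if all such paths end up $\gamma$-zero then $\gamma(O) = 0$, contradicting the hypothesis.

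First I would fix notation. For each $i \in \{1,2,3\}$ let $x_i \in V(O)$ and $t_i \in T$ be the two endpoints of $P_i$, and set $p_i := \gamma(P_i)$. Since $P_1, P_2, P_3$ are vertex-disjoint, the $x_i$ are three distinct vertices of $O$ and partition $O$ into three arcs; for each permutation $(i,j,k)$ of $(1,2,3)$ let $a_{ij}$ denote the $\gamma$-value of the arc from $x_i$ to $x_j$ avoiding $x_k$, so that $a_{ij} = a_{ji}$ and $a_{12} + a_{23} + a_{13} = \gamma(O)$.

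For each pair $i \neq j$ and each of the two arcs of $O$ between $x_i$ and $x_j$, the concatenation of $P_i$, the arc, and $P_j$ yields a simple path $Q$ in $H$ from $t_i$ to $t_j$, because $P_i$ meets $V(O)$ only at $x_i$, similarly for $P_j$, and $P_i, P_j$ are vertex-disjoint. The path $Q$ need not be a $T$-path, since internal vertices of the chosen arc may lie in $T$; however $Q$ then decomposes into consecutive $T$-paths of $H$ whose $\gamma$-values sum to $\gamma(Q)$. Assuming for contradiction that every $T$-path in $H$ is $\gamma$-zero, each of the six such concatenations $Q$ satisfies $\gamma(Q) = 0$, giving, for each unordered pair $\{i,j\}$ with third index $k$, the two equations
\[
    p_i + p_j + a_{ij} = 0 \quad\text{and}\quad p_i + p_j + a_{ik} + a_{kj} = 0.
\]
Subtracting yields $a_{ij} = a_{ik} + a_{kj}$ for each of the three pairs, and summing those three relations gives $a_{12} + a_{23} + a_{13} = 2(a_{12} + a_{23} + a_{13})$, so $\gamma(O) = a_{12}+a_{23}+a_{13} = 0$, contradicting $\gamma(O) \neq 0$.

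The only subtle point, and the one to be careful about, is the technicality that the concatenated path $Q$ need not itself be a $T$-path, because the interior of the arc of $O$ used in $Q$ may meet $T$. This is handled cleanly by decomposing $Q$ into its constituent $T$-subpaths and invoking additivity of $\gamma$ along the decomposition; the remainder of the argument is elementary linear algebra in the abelian group $\Gamma$.
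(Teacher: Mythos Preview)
Your argument is correct and is essentially the same as the paper's. The paper labels the six paths $Q_i,Q'_i$ (indexed by the omitted $P_i$) and computes directly that $\sum_{i\in[3]}\big(\gamma(Q_i)-\gamma(Q'_i)\big)=\gamma(O)\neq0$, then uses the same observation you do---that a path between two vertices of~$T$ is an edge-disjoint union of $T$-paths---to extract a $\gamma$-non-zero $T$-path; you instead assume all six concatenations are $\gamma$-zero and deduce $\gamma(O)=0$ by the identical arithmetic.
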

 
\begin{proof}
    We may assume that~${\abs{V(P_i) \cap V(O)} = \abs{V(P_i) \cap T} = 1}$ for all~${i \in [3]}$ by taking a subpath if necessary. 
    For~${i \in [3]}$, 
    let~$Q_i$ and~$Q'_i$ be the two paths in~$H$ each having~${\bigcup_{j \in [3] \setminus \{i\}} V(P_j) \cap T}$ as its set of endvertices, 
    where~$Q'_i$ is the path that is disjoint from~$P_i$. 
    Now, 
    \[
        \sum \limits_{i \in [3]} \left( \gamma(Q_{i}) - \gamma(Q'_{i}) \right) 
        = 2 \cdot \sum \limits_{i \in [3]} \left (\gamma(P_i) - \gamma(P_i) \right) + 2 \cdot \gamma(O) - \gamma(O) 
        = \gamma(O)
        \neq 0.
    \]
    Hence, for some~${i \in [3]}$, one of the paths~$Q_{i}$ or~$Q'_{i}$ is~$\gamma$-non-zero. 
    And since this path is the edge-disjoint union of $T$-paths, it contains a~$\gamma$-non-zero $T$-path, as desired. 
\end{proof}

Given an abelian group~$\Gamma$ and a $\Gamma$-labelled graph ${(G,\gamma)}$, we are interested in the packing function~$\nu$ for~$G$ which maps a subgraph~$H$ of~$G$ to the size of the largest half-integral packing 
of the type of cycles of~$H$ we are considering. 
To prove our main result, we will need the following tool for finding disjoint sets of paths in~$G$ which are non-zero with respect to the induced labelling of certain quotient groups, which we will later construct.

\begin{lemma}
    \label{lem:cover}
    Let~$u$, $k$ be positive integers such that~${f_{\ref*{thm:tpath}}(k) < u - 2}$.
    Let~$\Gamma$ be an abelian group, 
    let~${(G, \gamma)}$ be a $\Gamma$-labelled graph, and let~$\nu$ be a packing function for~$G$ such that
    \begin{itemize}
        \item every minimal subgraph~${H}$ of~$G$ with~${\nu(H) \geq 1}$ is a $\gamma$-non-zero cycle,
        \item ${\tau_\nu(H) \leq 3 u}$ for every subgraph~${H}$ of $G$ with~${\nu(H) < \nu(G)}$, and 
        \item ${\tau_\nu(G) \geq u}$.
    \end{itemize} 
    Let~${T \subseteq V(G)}$ be a minimum $\nu$-hitting set for~$G$ and let~${N \subseteq V(G)}$ such that for every~${S \subseteq V(G)}$ of size less than~${u}$, there is a component of~${G-S}$ containing a vertex of~$N$ and at least~$4u$ vertices of~$T$. 
    Then~$G$ contains~$k$ vertex-disjoint $\gamma$-non-zero $N$-paths.
\end{lemma}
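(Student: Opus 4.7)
The plan is to apply Theorem~\ref{thm:tpath} with $A := N$ and to derive a contradiction in the event that its second conclusion occurs, producing a set $X \subseteq V(G)$ with $\abs{X} \leq f_{\ref*{thm:tpath}}(k) < u - 2$ such that $G - X$ contains no $\gamma$-non-zero $N$-path. I would first use the hypothesis on $N$ with $S := X$ to obtain a component $C$ of $G - X$ containing an $N$-vertex and satisfying $\abs{T_C} \geq 4u$, writing $T_C := T \cap V(C)$. Since $\abs{(T \setminus T_C) \cup X} \leq \abs{T} - 4u + (u - 3) < \abs{T}$, the minimality of $T$ implies that $(T \setminus T_C) \cup X$ is not a $\nu$-hitting set, so there is a minimal subgraph $O$ of $G$ with $\nu(O) \geq 1$ whose vertex set is disjoint from $(T \setminus T_C) \cup X$. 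By the structural hypothesis on $\nu$, $O$ is a $\gamma$-non-zero cycle, and because $T$ must hit $O$ while $V(O) \cap T \subseteq T_C$, $V(O) \cap X = \emptyset$, and $O$ is connected, it follows that $O \subseteq C$.

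The proof then splits on whether $\nu(C) < \nu(G)$. In the easier case $\nu(C) < \nu(G)$, the hypothesis supplies a $\nu$-hitting set $T^{\ast} \subseteq V(C)$ of $C$ with $\abs{T^{\ast}} \leq 3u$. I would then verify that $(T \setminus T_C) \cup X \cup T^{\ast}$ is a $\nu$-hitting set for $G$: any $\gamma$-non-zero cycle $O'$ of $G$ whose vertex set avoids $T \setminus T_C$ and $X$ must satisfy $V(O') \cap T \subseteq T_C$ and hence $V(O') \cap V(C) \neq \emptyset$, so by connectivity $O' \subseteq C$ and $T^{\ast}$ hits $O'$. Its size is at most $\abs{T} - 4u + (u - 3) + 3u = \abs{T} - 3 < \abs{T}$, contradicting minimality of $T$.

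The main obstacle is the remaining case $\nu(C) = \nu(G)$. Here I would attempt to find three vertex-disjoint $(V(O), N)$-paths in $C$, which together with $O$ yield via Lemma~\ref{lem:non-zero-path-in-cycle} a $\gamma$-non-zero $N$-path in $C \subseteq G - X$, contradicting the defining property of $X$. By Menger's theorem, the only obstruction is a set $Z \subseteq V(C)$ with $\abs{Z} \leq 2$ separating $V(O)$ from $N \cap V(C)$ in $C$. Since $\abs{X \cup Z} < u$, applying the hypothesis on $N$ with $S := X \cup Z$ gives a component $C'$ of $G - (X \cup Z)$ with an $N$-vertex and at least $4u$ vertices of $T$; because $Z$ separates $V(O)$ from $N \cap V(C)$ in $C$ and $C$ is a component of $G - X$, any such $C'$ must satisfy $V(C') \cap V(O) = \emptyset$. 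A final split finishes the argument: if $\nu(C') < \nu(G)$, the construction of the previous paragraph, now using $X \cup Z$ and $C'$ in place of $X$ and $C$, yields a $\nu$-hitting set of $G$ of size at most $\abs{T} - 4u + (u - 3) + 2 + 3u = \abs{T} - 1 < \abs{T}$, again contradicting minimality of $T$; while if $\nu(C') = \nu(G)$, additivity of $\nu$ forces $\nu(G - V(C')) = 0$, but $O$ is a subgraph of $G - V(C')$ with $\nu(O) \geq 1$, contradicting monotonicity of $\nu$.
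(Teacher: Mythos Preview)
Your proof is correct. The paper's argument follows the same overall strategy (Theorem~\ref{thm:tpath}, then Menger, then Lemma~\ref{lem:non-zero-path-in-cycle} and a smaller hitting set), but arranges the steps more economically and avoids your case distinctions.

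The key difference is the order of operations. You first apply the hypothesis on~$N$ with $S:=X$ to find a rich component~$C$, then locate~$O$ inside~$C$, and only then split on whether $\nu(C)<\nu(G)$; in the case $\nu(C)=\nu(G)$ you apply Menger inside~$C$, find a second component~$C'$, and split again on~$\nu(C')$. The paper instead finds~$O$ anywhere in $G-X$ (using only $\abs{X}<\tau_\nu(G)$), applies Menger immediately to obtain a set~$S'$ of size at most $\abs{X}+2$ separating~$O$ from~$N$ in~$G$, and \emph{then} invokes the hypothesis on~$N$ with~$S:=S'$ to get a component~$H$. Since~$S'$ separates~$O$ from~$N$, this~$H$ is automatically vertex-disjoint from~$O$, whence additivity gives $\nu(H)\le \nu(H\cup O)-\nu(O)<\nu(G)$ with no case analysis. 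Your final subcase $\nu(C')=\nu(G)$ reaches the same disjointness conclusion, just after two extra branchings. Both arguments are sound; the paper's ordering simply eliminates the need for the intermediate component~$C$ and the two case splits.
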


\begin{proof}
    Suppose that $G$ does not contain~$k$ vertex-disjoint $\gamma$-non-zero $N$-paths. 
    By Theorem~\ref{thm:tpath}, there exists~${S \subseteq V(G)}$ of size less than~${u-2}$ hitting all $\gamma$-non-zero $N$-paths. 
    Since~${\abs{S} < u \leq \tau_\nu(G)}$, we have that~${\nu(G - S) \geq 1}$, so $G-S$ has a $\gamma$-non-zero cycle~$O$ with~${\nu(O) \geq 1}$. 
    By Lemma~\ref{lem:non-zero-path-in-cycle}, $G-S$ does not have three vertex-disjoint ${(V(O),N)}$-paths.
    By Menger's theorem applied to~${G-S}$, there exists~${S'\subseteq V(G)}$ of size at most~${\abs{S}+2}$ separating~$O$ from~$N$. 
    Since~${\abs{S'} < u}$, by the given assumption on~$N$, the graph $G-S'$ has a component~$H$ containing a vertex of~$N$ and at least~$4u$ vertices of~$T$.
    Now~${\nu(H) \leq \nu(H\cup O) - \nu(O) < \nu(H\cup O) \leq \nu(G)}$, so there is a $\nu$-hitting set~$T_H$ for~$H$ of size at most~$3u$. 
    Let~${T' := T_H \cup S' \cup (T \setminus V(H))}$, and observe that~${\abs{T'} \leq \abs{T_H} + \abs{S'} + \abs{T} - 4u < \abs{T}}$. 
    But~${\nu(G-T') \leq \nu(G - (S' \cup V(H) \cup T)) + \nu(H-T_H) = 0}$, 
    contradicting the assumption that~$T$ is a minimum $\nu$-hitting set.
\end{proof}

\section{Clean walls}
\label{sec:cleanwalls}

In the proof of our main theorem in Section~\ref{sec:main}, we will apply Theorem~\ref{thm:wall} and Lemma~\ref{lem:welllinked} to construct a wall~$W$ in a group-labelled graph. 
However, it will be useful to move to a large subwall of~$W$ which has some nice homogeneity properties. 
For this purpose, we introduce the following notion of cleanness.

Let ${\Gamma = \prod_{i \in [m]} \Gamma_i}$ be a product of~$m$ abelian groups 
and let~$(G,\gamma)$ be a $\Gamma$-labelled graph. 
Given a subset~${Z \subseteq [m]}$ and an integer~$\ell$, 
we say that a wall~$W$ in~$G$ is \emph{$(\gamma,Z,\ell)$-clean} if 

\begin{enumerate}
    [label=(\arabic*)]
    \item\label{item:clean1} every $N^W$-path in~$W$ is $\gamma_i$-zero for all~${i \in Z}$, and
    \item\label{item:clean2} $W$ has no ${(\ell,\ell)}$-subwall which is $\gamma_i$-bipartite for all~${i \in [m] \setminus Z}$. 
\end{enumerate}

\begin{lemma}
    \label{lem:cleansubwall}
    Let~${\Gamma = \prod_{i \in [m]} \Gamma_i}$ be a product of~$m$ abelian groups, 
    let~${(G,\gamma)}$ be a $\Gamma$-labelled graph, 
    let~${\psi \colon \{0\} \cup [m+1] \to \mathbb{N}_{\geq 3}}$ be a function, 
    and let~$W$ be a wall of order $\psi(0)+2$ in~$G$. 
    Then there exist a $\Gamma$-labelling~$\gamma'$ of~$G$ shifting-equivalent to~$\gamma$, 
    a subset~$Z$ of~${[m]}$, 
    and a ${(\gamma',Z,\psi(\abs{Z}+1)+2)}$-clean $\branch(W)$-anchored ${(\psi(\abs{Z}),\psi(\abs{Z}))}$-subwall of~$W$.
\end{lemma}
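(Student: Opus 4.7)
The plan is to build $Z$ and the clean subwall by an iterative procedure that absorbs one coordinate of the product group $\Gamma$ at a time. Starting from $W_0 := W$, $Z_0 := \emptyset$, and $\gamma^{(0)} := \gamma$, I would maintain at step $j$ a subwall $W_j$ of $W$ of order $\psi(j)+2$, a set $Z_j \subseteq [m]$ of size $j$, and a $\Gamma$-labelling $\gamma^{(j)}$ shifting-equivalent to $\gamma$, satisfying the key invariant that every corridor of $W_j$ is $\gamma^{(j)}_i$-zero for every $i \in Z_j$.

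At iteration $j$, I would test whether $W_j$ contains some $(\psi(j+1)+2,\psi(j+1)+2)$-subwall $W''$ which is $\gamma^{(j)}_i$-bipartite for every $i \in [m] \setminus Z_j$. If no such subwall exists (in particular if $\psi(j+1)+2 > \psi(j)+2$, so for sheer size reasons) or if $Z_j = [m]$, the iteration terminates. Otherwise I pick any $i \in [m] \setminus Z_j$ and apply Lemma~\ref{lem:shifting} to the $\Gamma_i$-labelled subgraph $(W'',\gamma^{(j)}_i)$, which is valid by Remark~\ref{rmk:wall3connected}. The resulting shifts are by elements of $\Gamma_i$ of order~$2$ at vertices of $W''$; lifting each such shift to $\Gamma$ via the element $(0,\dots,0,\delta_i,0,\dots,0)$ (still of order~$2$) produces a labelling $\gamma^{(j+1)}$ shifting-equivalent to $\gamma^{(j)}$ under which every corridor of $W''$ is $\gamma^{(j+1)}_i$-zero, while all other coordinates are untouched. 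I would then set $W_{j+1} := W''$ and $Z_{j+1} := Z_j \cup \{i\}$. The invariant persists because every corridor of $W_{j+1}$, viewed inside $W_j$, decomposes at the intermediate degree-$3$ vertices of $W_j$ into a concatenation of corridors of $W_j$, all of which remain $\gamma^{(j+1)}_{i'}$-zero for $i' \in Z_j$.

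When the iteration halts at some step $j^*$, I would set $Z := Z_{j^*}$ and $\gamma' := \gamma^{(j^*)}$, and extract from $W_{j^*}$ a $\branch(W_{j^*})$-anchored $(\psi(j^*),\psi(j^*))$-subwall $W_f$ by Remark~\ref{rmk:nicesubwall} followed by a column-slice (the hypothesis $\psi \geq 3$ guarantees the required order bound). Since a degree-$3$ vertex of $W_{j^*}$ has degree~$3$ in $W$ as well, $\branch(W_{j^*}) \subseteq \branch(W)$, so $W_f$ is $\branch(W)$-anchored. Cleanness condition~(2) is exactly the termination condition, while for condition~(1) I use that every $N^{W_f}$-path in $W_f$, viewed inside $W_{j^*}$, goes from one branch vertex of $W_{j^*}$ to another, possibly passing through further branch vertices of $W_{j^*}$ (which are degree~$2$ in $W_f$), and therefore splits into a concatenation of corridors of $W_{j^*}$; each of these is $\gamma'_i$-zero for every $i \in Z$ by the invariant.

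The chief conceptual obstacle I anticipate is the mismatch between Lemma~\ref{lem:shifting}, which produces a labelling whose \emph{corridors} are zero, and the cleanness condition, which demands that the \emph{$N^W$-paths} of the final subwall are zero. The cure is to phrase the iteration's invariant entirely in terms of corridors and to pay off the debt only at the very end by passing to a $\branch$-anchored subwall: a $\branch$-anchored subwall's $N^W$-paths are concatenations of corridors of the ambient wall, so corridor-level zero-sums automatically upgrade to $N^W$-path-level zero-sums. The other minor point requiring care is ensuring that the order of each $W_{j+1}$ remains at least $\psi(j+1)+2$ so that subsequent iteration steps are well-defined; this follows because $W_{j+1}$ is chosen to be exactly the $(\psi(j+1)+2,\psi(j+1)+2)$-subwall $W''$ itself, with $\branch$-anchoring deferred to the terminal extraction of $W_f$.
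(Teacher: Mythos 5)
Your iterative structure is a legitimate reorganization of the paper's one‑shot maximality argument, and the technical machinery is handled correctly: lifting a $\Gamma_i$-shift to $\Gamma$ via an order‑$2$ element supported in one coordinate, the observation that shifts in distinct factors do not interfere so the invariant for previously absorbed coordinates is preserved, the decomposition of a corridor of a subwall into a concatenation of corridors of the ambient wall, and deferring $\branch$‑anchoring to the terminal step so that corridor‑level zero‑sums upgrade to $N^{W_f}$‑path‑level zero‑sums. That last observation is indeed the crux, and you identify it correctly.

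There is, however, a genuine gap in your termination condition. You test whether $W_j$ has a subwall that is $\gamma^{(j)}_i$-bipartite \emph{simultaneously for every} $i \in [m]\setminus Z_j$, and you identify cleanness~(2) with the failure of this test. But the way cleanness~(2) is consumed downstream — in the proof of Lemma~\ref{lem:omega-avoiding-cycle}, where the paper writes ``since $i\notin Z$ and $W$ is $(\gamma,Z,\ell)$-clean, $W$ has no $\gamma_i$-bipartite $(\ell,\ell)$-subwall'' — pins down the intended reading as the stronger per‑coordinate statement: for every $i\notin Z$ separately, there is no $\gamma_i$-bipartite $(\ell,\ell)$-subwall. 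Your condition is strictly weaker. Concretely, with $[m]=\{1,2\}$ your loop may halt at step~$0$ with $Z=\emptyset$ while $W$ still contains a $\gamma_1$-bipartite subwall and a different $\gamma_2$-bipartite subwall, merely no subwall bipartite in both coordinates at once; the resulting subwall is then not clean in the sense the paper uses. Worse, once your loop absorbs a single coordinate, $W_{j+1}$ is (by choice) bipartite in \emph{all} coordinates simultaneously and bipartiteness is shift‑invariant, so from that point on your test succeeds or fails purely according to whether $\psi(j+2)\le\psi(j+1)$, independently of the labelling; the procedure no longer extracts information from the graph.

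The fix is to test per coordinate: at step $j$, ask whether there exists a \emph{single} $i \in [m]\setminus Z_j$ for which $W_j$ has a $(\psi(j+1)+2,\psi(j+1)+2)$-subwall that is $\gamma^{(j)}_i$-bipartite, and if so absorb that particular $i$ (``pick any $i$'' should be ``pick that $i$''); terminate only when no coordinate admits such a subwall. That termination condition is exactly cleanness~(2) as the paper uses it. The paper achieves the same thing in one shot: it takes $Z$ maximal so that a single subwall is $\gamma_i$-bipartite for all $i\in Z$, and observes that any $\gamma_j$-bipartite subwall of that witness for some $j\notin Z$ is automatically also $\gamma_i$-bipartite for all $i\in Z$ (being a subwall of the witness) and hence would certify $Z\cup\{j\}$, contradicting maximality. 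With your loop's test corrected in this way, the rest of your argument — the corridor invariant, its maintenance across a shift, and the terminal extraction — is sound.
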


\begin{proof}
    Let~$Z$ be a maximal subset of~${[m]}$ such that there is a $(\psi(\abs{Z})+2,\psi(\abs{Z})+2)$-subwall~$W'$ of~$W$ which is $\gamma_i$-bipartite for all~${i \in Z}$. 
    Such a set~$Z$ exists because~${Z := \emptyset}$ satisfies the requirement.
    Since~$Z$ is maximal, 
    there is no~${j \in [m] \setminus Z}$ such that~$W'$ has a ${(\psi(\abs{Z}+1)+2,\psi(\abs{Z}+1)+2)}$-subwall which is $\gamma_j$-bipartite.

    Among all $\Gamma$-labellings~$\gamma'$ of~$G$ shifting-equivalent to~$\gamma$, 
    we choose~$\gamma'$ maximising the number of elements~${i \in Z}$ such that all corridors of~$W'$ are $\gamma'_i$-zero.
    If there is~${i \in Z}$ such that some corridor of~$W'$ is not $\gamma'_i$-zero, then 
    Lemma~\ref{lem:shifting} applied to~$\Gamma_i$ yields the $\Gamma$-labelling~$\gamma''$ for which every corridor of~$W'$ is~$\gamma''_i$-zero, 
    thus contradicting the choice of~$\gamma'$.
    Thus all corridors of~$W'$ are $\gamma_i'$-zero for all~${i \in Z}$.
    
    By Remark~\ref{rmk:nicesubwall}, $W'$ has a $\branch(W')$-anchored ${(\psi(\abs{Z}),\psi(\abs{Z}))}$-subwall~$W''$. 
    Then~$W''$ is $\branch(W)$-anchored since~${\branch(W') \subseteq \branch(W)}$. 
    Now the property~\ref{item:clean1} holds since every~$N^{W''}$-path in~$W''$ is a corridor of~$W'$. 
\end{proof}

In a sense, the notion of cleanness helps us to generalise the ideas of Thomassen~\cite{Thomassen1988} who proved the following result. 

\begin{proposition}[Thomassen~\cite{Thomassen1988}]
    \label{prop:thomassen}
    There exists a function~${w_{\ref*{prop:thomassen}} \colon \mathbb{N}^2 \to \mathbb{N}}$ satisfying the following. 
    Let~$t$ and~${w \geq 3}$ be integers, let~$\Gamma$ be an abelian group generated by an element of order at most~$t$, 
    and let~${(W, \gamma)}$ be a $\Gamma$-labelled wall of order~${w_{\ref*{prop:thomassen}}(t,w)}$. 
    Then~$W$ contains a ${(w,w)}$-subwall~$W'$ such that~${\gamma(P) = 0}$ for all corridors~$P$ of~$W'$. 
\end{proposition}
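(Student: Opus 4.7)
My plan is to find, inside any sufficiently large $\Gamma$-labelled wall $W$, a subwall on which the corridor values are uniform, and then to ``telescope'' this subwall down to one on which every corridor has $\gamma$-value zero. Since $\Gamma$ is cyclic of order $n \leq t$, there are at most $t$ possible values in $\Gamma$, and every element of $\Gamma$ has order dividing $n$.

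The first step is a Ramsey-type lemma: for every $w' \geq 3$ and every abelian group $\Gamma$ with~${\abs{\Gamma} \leq t}$, there is a number~${u(t,w')}$ such that every $\Gamma$-labelled wall of order~${u(t,w')}$ contains a~${(w',w')}$-subwall $W_1$ in which all \emph{horizontal} corridors (subpaths of rows of~$W$) have a single common $\gamma$-value $v_h \in \Gamma$, and all \emph{vertical} corridors (subpaths of columns of~$W$) have a single common value $v_v \in \Gamma$. I would prove this by iterative pigeonhole: repeatedly select a large set of rows on which the sequence of horizontal corridor values is constant along the row and identical between rows, then do the analogous refinement on columns, then re-refine rows, and so on. Each stage loses a factor depending on~$t$, so an explicit tower-type bound on~${u(t,w')}$ can be extracted.

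Next, given $W_1$, let $d_h$ and $d_v$ denote the orders of $v_h$ and $v_v$ in $\Gamma$, so that~${d_h, d_v \mid n \leq t}$. I would obtain a sub-subwall $W'$ of $W_1$ by choosing every $d_h$-th column and every $d_v$-th row of $W_1$, in a manner that respects the zigzag structure of the wall so that $W'$ is itself a subdivision of an elementary wall. By the structure of the wall, each horizontal corridor of $W'$ is built from exactly $d_h$ consecutive horizontal corridors of $W_1$, each of value $v_h$, and therefore has $\gamma$-value $d_h v_h = 0$; similarly each vertical corridor of $W'$ has value $d_v v_v = 0$. Taking $W_1$ of order at least $tw$ guarantees that $W'$ has order at least $w$, so we may define $w_{\ref*{prop:thomassen}}(t,w) := u(t, tw)$.

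The main obstacle is the Ramsey lemma in the first step. The zigzag geometry of walls—where consecutive columns are linked by rungs only at alternating positions—means that naively selecting subsets of rows and columns need not produce a valid subdivision of a smaller elementary wall. Careful bookkeeping is required to ensure that at each pigeonhole stage the retained rows and columns are spaced so that the remaining subgraph is genuinely a subwall, and to ensure that the telescoping in the second step produces corridors that are concatenations of exactly $d_h$ (respectively $d_v$) corridors of $W_1$, rather than some other number dictated by the parity constraints of the wall.
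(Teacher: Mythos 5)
The paper does not prove this proposition—it is cited directly from Thomassen—so there is no internal proof to compare against; I am assessing your plan on its own terms. The telescoping step contains a genuine gap, and it is structural rather than a matter of bookkeeping.

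In a wall the rung between rows $j$ and $j+1$ attaches at positions $i$ with $i+j$ odd, while the rung between rows $j+1$ and $j+2$ attaches at positions with $i+j$ even, so the rungs alternate horizontal position from row to row. Consequently a vertical corridor of your subwall $W'$ joining two retained rows that are $d_v$ apart in $W_1$ cannot go straight up; it must zigzag, alternating a rung with a one-unit horizontal shift, and so it traverses $d_v$ vertical corridors \emph{and} $d_v-1$ horizontal corridors of $W_1$. Its value is $d_v v_v + (d_v-1)v_h$, not $d_v v_v$ as you claim. Likewise, keeping columns $C_1, C_{1+d_h}, \dots$ of $W_1$ makes the horizontal corridors of $W'$ in a fixed row alternate between the within-column corridor (one corridor of $W_1$, value $v_h$) and the between-column segment spanning $2d_h-1$ corridors of $W_1$ (value $(2d_h-1)v_h$); they are not uniformly $d_h v_h$. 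Collapsing to a single common value $v_h=v_v=v$ does not help either: the vertical corridors of $W'$ then carry value $(2d_v-1)v$, and since $2d_v-1$ is always odd, the order-$2$ part of $v$ can never be annihilated this way. A ``multiply a common nonzero corridor value down to zero'' strategy therefore cannot close the argument. The route that does work uses partial sums of $\gamma$ along rows directly: with $\abs{\Gamma}\le t$, pigeonhole on partial sums yields many nails along each row between which the accumulated $\gamma$-value is already zero, so the merged horizontal corridors of the resulting subwall are themselves $\gamma$-zero (rather than a multiple of some nonzero $v_h$), with the rungs—which necessarily pick up horizontal pieces as above—handled by a separate argument rather than by scaling their own value.
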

    
We extend Proposition~\ref{prop:thomassen} to a group generated by a fixed number of generators.

\begin{lemma}
    \label{lem:smallorder}
    There exists a function~$w_{\ref*{lem:smallorder}} \colon \mathbb{N}^3 \to \mathbb{N}$ satisfying the following. 
    Let~$q$, $t$ and~${w \geq 3}$ be integers, let~$\Gamma$ be an abelian group generated by~$q$ elements 
    each of order at most~$t$, 
    and let~${(W, \gamma)}$ be a $\Gamma$-labelled wall of order $w_{\ref*{lem:smallorder}}(q, t, w)$. 
    Then~$W$ contains a ${(w,w)}$-subwall~$W'$ such that~${\gamma(P) = 0}$ for all corridors~$P$ of~$W'$.
\end{lemma}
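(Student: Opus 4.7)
The plan is to proceed by induction on~$q$, using Proposition~\ref{prop:thomassen} in the inductive step to ``peel off'' one generator at a time via a quotient argument. For the base case~${q=0}$ the group is trivial, so every corridor is automatically $\gamma$-zero and we may take~${w_{\ref*{lem:smallorder}}(0,t,w) := w}$.

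For the inductive step, assume the statement holds for~${q-1}$, and let~${\Gamma = \gen{g_1, \dots, g_q}}$ where each~$g_i$ has order at most~$t$. Set~${\Lambda := \gen{g_2, \dots, g_q}}$, and consider the induced~${\Gamma/\Lambda}$-labelling of~$W$. Since the quotient~${\Gamma/\Lambda}$ is generated by the image of~$g_1$ alone, it is cyclic of order at most~$t$, so we may apply Proposition~\ref{prop:thomassen} to this labelling. Choosing~$W$ of order at least~${w_{\ref*{prop:thomassen}}(t, w_{\ref*{lem:smallorder}}(q-1, t, w))}$, we obtain a subwall~$W_1$ of order~${w_{\ref*{lem:smallorder}}(q-1, t, w)}$ such that every corridor~$P$ of~$W_1$ satisfies~${\gamma(P) \in \Lambda}$.

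Next I would pass to a ``contracted'' view of~$W_1$. Let~$\hat{W}_1$ be the elementary wall of the same order as~$W_1$, and let~${\hat{\gamma}\colon E(\hat{W}_1) \to \Lambda}$ be defined by~${\hat{\gamma}(e) := \gamma(P_e)}$, where~$P_e$ is the corridor of~$W_1$ corresponding to the edge~$e$ of~$\hat{W}_1$ under the natural identification between the branch vertices of~$W_1$ and the vertices of~$\hat{W}_1$. The group~$\Lambda$ is generated by~${q-1}$ elements~${g_2, \dots, g_q}$ each of order at most~$t$, so the inductive hypothesis applied to~${(\hat{W}_1, \hat{\gamma})}$ yields a~${(w,w)}$-subwall~$\hat{W}_2$ of~$\hat{W}_1$ in which every corridor is~$\hat{\gamma}$-zero.

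Finally, I would transfer~$\hat{W}_2$ back to~$W$: let~$W_2$ be the subgraph of~$W_1$ obtained as the union of the corridors~$P_e$ for edges~${e \in E(\hat{W}_2)}$. Then~$W_2$ is a subdivision of~$\hat{W}_2$ and its branch vertices coincide with those of~$\hat{W}_2$; hence each corridor of~$W_2$ is a concatenation of corridors~$P_e$ along a corridor~$\hat{P}$ of~$\hat{W}_2$, with~${\gamma}$-value equal to~${\hat{\gamma}(\hat{P}) = 0}$. Setting~${w_{\ref*{lem:smallorder}}(q,t,w) := w_{\ref*{prop:thomassen}}(t, w_{\ref*{lem:smallorder}}(q-1, t, w))}$ completes the induction. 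The only nontrivial point is keeping the identification between corridors of~$W_2$ and corridors of~$\hat{W}_2$ bookkeeping-correct, but since~$\hat{W}_1$ is elementary and~$W_1$ is its subdivision, this identification is canonical and the~$\gamma$-values transfer verbatim.
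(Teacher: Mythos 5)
Your proof is correct and takes essentially the same route as the paper: induct on $q$, apply Proposition~\ref{prop:thomassen} to reduce to a subwall whose corridors have values in a subgroup generated by $q-1$ of the generators, then re-label and invoke the induction hypothesis. The only cosmetic differences are (i) you peel off a generator by passing to the quotient $\Gamma/\Lambda$, whereas the paper constructs an auxiliary $\Gamma_1$-labelling $\gamma'$ with $\gamma(e)+\gamma'(e)\in\Gamma_2$ and applies Proposition~\ref{prop:thomassen} to $\gamma'$ — the quotient version is arguably cleaner; and (ii) you make the re-labelling of the intermediate subwall explicit by passing to the contracted elementary wall $\hat W_1$ with $\hat\gamma(e):=\gamma(P_e)$, a step the paper leaves implicit when it applies the induction hypothesis to $W'$ (and which the paper does make explicit in the very similar argument for Corollary~\ref{cor:smallorder}). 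Your bookkeeping in the pull-back step is sound: corridors of the pulled-back subwall $W_2$ are concatenations of the corridors $P_e$ along corridors of $\hat W_2$, so their $\gamma$-values equal the corresponding $\hat\gamma$-values, which are zero.
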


\begin{proof}
    We define 
    \begin{itemize}
        \item ${w_{\ref*{lem:smallorder}}(1,t,w) := w_{\ref*{prop:thomassen}}(t,w)}$, and 
        \item ${w_{\ref*{lem:smallorder}}(q,t,w) := w_{\ref*{prop:thomassen}}(t,f_{\ref*{lem:smallorder}}(q-1,t,w))}$ for all integers~${q \geq 2}$.
    \end{itemize}
    We prove the lemma by induction on~$q$ with Proposition~\ref{prop:thomassen} as the base case. 
    So let~${q \geq 2}$, and
    let~${\Gamma = \gen{ \{ x_i \colon i \in [q] \}}}$ for a suitable set of~$q$ generators each of order at most~$t$. 
    Let~${\Gamma_1 := \gen{x_q}}$ and~${\Gamma_2 := \gen{\{x_i \colon i \in [q-1]\}}}$, and let~$\gamma'$ be a $\Gamma_1$-labelling of~$W$ such that for every edge~$e$ of~$W$, we have~${\gamma(e) + \gamma'(e) \in \Gamma_2}$. 
    By Proposition~\ref{prop:thomassen}, $W$ has a  ${(w_{\ref*{lem:smallorder}}(q-1,t,w),w_{\ref*{lem:smallorder}}(q-1,t,w))}$-subwall~$W'$ such that~${\gamma(P) \in \Gamma_2}$ for all corridors~$P$ of~$W'$.     
    By the induction hypothesis, $W'$ has a ${(w,w)}$-subwall~$W''$ such that~${\gamma(P) = 0}$ for all corridors~$P$ of~$W''$. 
    Note that as~$W'$ is a subwall of~$W$, all corridors of~$W''$ in~$W'$ are corridors of~$W''$ in~$W$. 
\end{proof}

The following variation allows us to take advantage of our notion of cleanness and will be needed for Lemma~\ref{lem:omega-avoiding-cycle}. 

\begin{corollary}
    \label{cor:smallorder}
    There exists a function~${w_{\ref*{cor:smallorder}} \colon \mathbb{N}^3 \to \mathbb{N}}$ satisfying the following. 
    Let~$q$, $t$ and~${w \geq 3}$ be integers, let~$\Gamma$ be an abelian group, and let~$\Lambda$ be a subgroup of~$\Gamma$ generated by~$q$ elements each of order at most~$t$, 
    and let~${(W, \gamma)}$ be a $\Gamma$-labelled wall of order $w_{\ref*{cor:smallorder}}(q, t, w)$.
    If~${\gamma(O) \in \Lambda}$ for all cycles~$O$ of~$W$, 
    then~$W$ contains a $\gamma$-bipartite ${(w,w)}$-subwall. 
\end{corollary}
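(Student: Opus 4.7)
The plan is to combine Lemma~\ref{lem:shifting} applied in the quotient group $\Gamma/\Lambda$ with two applications of Lemma~\ref{lem:smallorder}. Since $\gamma(O)\in\Lambda$ for every cycle~$O$ of~$W$, the induced $\Gamma/\Lambda$-labelling $\bar\gamma$ makes $W$ be $\bar\gamma$-bipartite; combined with Remark~\ref{rmk:wall3connected}, Lemma~\ref{lem:shifting} then yields a $\Gamma/\Lambda$-labelling $\bar\gamma'$ shifting-equivalent to $\bar\gamma$ in which every corridor of $W$ is $\bar\gamma'$-zero. I then lift to $\Gamma$: for each vertex $v$ let $\bar\delta_v\in\Gamma/\Lambda$ be the net order-$\leq 2$ shift at $v$ and pick an arbitrary lift $\delta_v\in\Gamma$, so that $2\delta_v\in\Lambda$. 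Setting $\gamma'(e):=\gamma(e)+\delta_u+\delta_v$ for each edge $e=uv$ produces a $\Gamma$-labelling $\gamma'$ of $W$ in which every corridor has value in $\Lambda$, and every cycle satisfies $\gamma'(O)-\gamma(O)=\sum_{v\in V(O)}2\delta_v\in\Lambda$.

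Since the corridor values of $\gamma'$ lie in $\Lambda$, they induce a $\Lambda$-labelling $\tilde\gamma$ on the underlying elementary wall of $W$ (each elementary edge carrying the $\gamma'$-sum over its subdivided path in $W$), and then the $\tilde\gamma$-value of every corridor of this elementary wall equals the $\gamma'$-value of the corresponding corridor of $W$. Because $\Lambda$ is generated by $q$ elements of order at most $t$, Lemma~\ref{lem:smallorder} applied to this $\Lambda$-labelled wall with a sufficiently large target size produces a $(w_1,w_1)$-subwall with every corridor $\tilde\gamma$-zero; this corresponds to a $(w_1,w_1)$-subwall $W_1$ of $W$ with every corridor $\gamma'$-zero, and hence $\gamma'$-bipartite.

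Within $W_1$, every cycle $O$ satisfies $\gamma'(O)=0$, so $\gamma(O)=-\sum_{v\in V(O)}2\delta_v$, and this value depends only on the vertex set of $O$. Defining $\lambda\colon V(W_1)\to\Lambda$ by $\lambda(v):=-2\delta_v$ (taking $\lambda(v):=0$ at any unshifted or subdivision vertex) gives $\gamma(O)=\sum_{v\in V(O)}\lambda(v)$ for every cycle in $W_1$. The vertex-to-edge conversion lemma from Section~\ref{subsec:vxlabel}, applied with $\Omega=\{0\}$, produces an edge-labelling $\mu\colon E(W_1)\to\Lambda'$ such that $\lambda(O)=0$ iff $\mu(O)=0$ for every cycle, where $\Lambda'$ is generated by at most $q$ elements whose orders are bounded in terms of $q$ and $t$ (the orders are those of the invariant factors of the subgroup $\langle\lambda(V(W_1))\rangle\leq\Lambda$, doubled).

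Applying Lemma~\ref{lem:smallorder} a second time to $(W_1,\mu)$, with $w_1$ chosen large enough, yields a $(w,w)$-subwall $W_2$ of $W_1$ in which every corridor is $\mu$-zero; hence every cycle of $W_2$ is $\mu$-zero, $\lambda$-zero, and therefore $\gamma$-zero, so $W_2$ is the required $\gamma$-bipartite $(w,w)$-subwall. The function $w_{\ref*{cor:smallorder}}(q,t,w)$ is then defined by composing the wall-order bounds of the two applications of Lemma~\ref{lem:smallorder}. The main obstacle is that the shifts used in the first step lift order-$\leq 2$ elements of $\Gamma/\Lambda$ to elements of $\Gamma$ whose order need not be~$2$, so they perturb $\gamma$-cycle values and only give $\gamma'$-bipartiteness after the first application; the residual discrepancy happens to have the structure of a vertex-labelling, which is precisely what the vertex-to-edge conversion plus a second application of Lemma~\ref{lem:smallorder} is designed to eliminate.
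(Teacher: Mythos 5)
Your proof is correct, but it takes a genuinely different route from the paper's. You quotient out $\Lambda$ first, apply the existing Lemma~\ref{lem:shifting} to the $\bar\gamma$-bipartite quotient labelling, and then lift the shifts back to $\Gamma$; because the lifted shift amounts $\delta_v$ need not have order~$2$ in~$\Gamma$, the resulting~$\gamma'$ is not shifting-equivalent to~$\gamma$, and you correctly observe that the residual discrepancy $\gamma(O)-\gamma'(O)=-\sum_{v\in V(O)}2\delta_v$ is governed by a $\Lambda$-vertex-labelling, which you then eliminate by the vertex-to-edge conversion of Section~\ref{subsec:vxlabel} followed by a second pass of Lemma~\ref{lem:smallorder}. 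The paper instead never leaves~$\Gamma$: it repeats the spanning-tree shifting argument but chooses each shift amount to be $\sigma(2\gamma'(Q))-\gamma'(Q)$, where $\sigma(g)$ is a square root of~$g$, which genuinely has order at most~$2$ in~$\Gamma$ and lands the corridor values in the bounded auxiliary subgroup $\hat\Lambda = \gen{(\Lambda\setminus 2\Gamma)\cup\{\sigma(g)\colon g\in\Lambda\cap 2\Gamma\}}$. Since the paper's $\gamma'$ is shifting-equivalent to~$\gamma$, one application of Lemma~\ref{lem:smallorder} (with parameters $(t^q,2t^q,w)$, since $\hat\Lambda$ is generated by at most $\abs{\Lambda}\leq t^q$ elements of order at most $2t^q$) suffices, whereas your route needs two (with parameters $(q,t,\cdot)$ and then $(q,2t^q,\cdot)$, the latter because $\gen{\lambda(V(W_1))}\leq\Lambda$ is $q$-generated of exponent at most $t^q$). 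Both yield a valid bound; the paper's argument is self-contained to one shifting pass, while yours has the pedagogical advantage of reusing Lemma~\ref{lem:shifting} and the vertex-to-edge conversion as black boxes rather than re-deriving a shifting argument, at the cost of an extra layer of composition in the wall-order bound.
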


\begin{proof}
    Let~${w_{\ref*{cor:smallorder}}(q,t,w) := w_{\ref*{lem:smallorder}}(t^q,2t^q,w)}$.
    For each~$g$ in~${\Lambda \cap 2 \Gamma}$, let $\sigma(g)$ be an element of~$\Gamma$ such that~${2 \sigma(g) = g}$. 
    Let~$\hat{\Lambda}$ be the subgroup of~$\Gamma$ generated by~${S := (\Lambda \setminus 2\Gamma) \cup \{ \sigma(g) \colon g \in \Lambda \cap 2\Gamma \}}$. 
    Note that~${\Lambda \subseteq \hat{\Lambda}}$, 
    that~${\abs{S} \leq \abs{\Lambda} \leq t^q}$, 
    and that each element in~$S$ has order at most~${2 \abs{\Lambda} \leq 2t^q}$. 
    We will show that there is a $\Gamma$-labelling~$\gamma'$ 
    shifting-equivalent to~$\gamma$ such that~${\gamma'(P) \in \hat{\Lambda}}$ for every corridor~$P$ of~$W$.
    
    The wall~$W$ is a subdivision of some $3$-connected planar graph~${\hat{H}}$. 
    Let~$T$ be a spanning tree of~${\hat{H}}$, rooted at an arbitrary vertex~$r$. 
    Choose a $\Gamma$-labelling~$\gamma'$ shifting-equivalent to~$\gamma$ and a subtree~$T'$ of~$T$ containing~$r$ such that~${\gamma'(P) \in \hat{\Lambda}}$ for all corridors~$P$ of~$W$ corresponding to edges in~$T'$, and subject to these conditions,~$\abs{V(T')}$ is maximised.
    
    Suppose that~${T' \neq T}$. 
    Then there is an edge~$vw$ of~$T$ such that~${v \in V(T')}$ and~${w \notin V(T')}$. 
    Let~$Q$ be the corridor of~$W$ corresponding to the edge~$vw$. 
    Since~$\hat{H}$ is $3$-connected, there is a cycle~$O$ in~${W-E(Q)}$ containing~$v$ and~$w$. 
    Let~$O_1$ and~$O_2$ denote the distinct cycles in~${O \cup Q}$ containing~$Q$. 
    Since~$\gamma'$ is shifting-equivalent to~$\gamma$, 
    from the assumption on $W$ we deduce that ${\gamma'(O), \gamma'(O_1), \gamma'(O_2) \in \Lambda}$.
    Hence, ${2 \gamma'(Q) = \gamma'(O_1) + \gamma'(O_2) - \gamma'(O)\in \Lambda}$ and so $\sigma(2\gamma'(Q))$ is well defined.
    Observe that 
    \[
        2\left(\sigma(2\gamma'(Q))-\gamma'(Q)\right) 
        =  2\gamma'(Q) -2\gamma'(Q) 
        = 0.
    \]
    Let~$\gamma''$ be the $\Gamma$-labelling of~$G$ obtained from~$\gamma'$ by shifting by~${\sigma(2\gamma'(Q))-\gamma'(Q)}$ at~$w$. 
    Then $\gamma''(Q)=\gamma'(Q)+\sigma(2\gamma'(Q))-\gamma'(Q)=\sigma(2\gamma'(Q))\in \hat\Lambda$.
    Let~${T'' = T[V(T')\cup \{w\}]}$. 
    Then~${\gamma''(P) \in \hat{\Lambda}}$ for all corridors~$P$ of~$W$ corresponding to edges of~$T''$, contradicting our choice of~$\gamma'$ and~$T'$. Therefore $T'=T$.
    
    Now, observe that~${\gamma'(P) \in \hat{\Lambda}}$ for every corridor~$P$ of~$W$, 
    because~${\Lambda \subseteq \hat{\Lambda}}$ and for every cycle~$O$ of~$W$, we have~${\gamma'(O) = \gamma(O) \in \Lambda}$. 
    Let~$\gamma''$ be the $\hat{\Lambda}$-labelling of~$W$ which assigns an arbitrary edge~$e_P$ of each corridor~$P$ the value~${\gamma'(P)}$, and all other edges the value~$0$. 
    By Lemma~\ref{lem:smallorder}, there is a ${(w,w)}$-subwall~$W'$ of~$W$ which in particular is $\gamma''$-bipartite, and hence $\gamma'$-bipartite. 
    Since~$\gamma'$ and~$\gamma$ are shifting-equivalent, $W'$ is $\gamma$-bipartite.
\end{proof}

\section{Handling handles}
\label{sec:handles}

This section is dedicated to proving the following key lemma, which allows us to iteratively find sets of vertex-disjoint handles. 

\begin{lemma}
    \label{lem:addlinkage}
    There exist functions~${w_{\ref*{lem:addlinkage}}\colon \mathbb{N}^2 \to \mathbb{N}}$ and~$f_{\ref*{lem:addlinkage}} \colon \mathbb{N} \to \mathbb{N}$ satisfying the following. 
    Let~${k, t}$ and~$c$ be positive integers with~${c \geq 3}$, let~${\Gamma}$ be an abelian group, and let~${(G, \gamma)}$ be a $\Gamma$-labelled graph.
    Let~$W$ be a wall in~$G$ of order at least~${w_{\ref*{lem:addlinkage}}(k,c)}$ such that all corridors of~$W$ are $\gamma$-zero. 
    For each~${i \in [t-1]}$, let~$\mathcal{P}_i$ be a set of~$4k$ $W$-handles in~$G$ such that the paths in~${\bigcup_{i \in [t-1]} \mathcal{P}_i}$
    are vertex-disjoint. 
    If~$G$ contains at least~${f_{\ref*{lem:addlinkage}}(k)}$ vertex-disjoint $\gamma$-non-zero ${\branch(W)}$-paths, 
    then there exist a $c$-column-slice~$W'$ of~$W$ and a set~$\mathcal{Q}_i$ of~$k$ vertex-disjoint $W'$-handles for each~${i \in [t]}$ such that 
    \begin{enumerate}
        [label=(\roman*)]
        \item for each~${i \in [t-1]}$, the set~$\mathcal{Q}_i$ is a subset of the row-extension of~$\mathcal{P}_i$ to~$W'$ in~$W$, 
        \item the paths in~${\bigcup_{i \in [t]} \mathcal{Q}_i}$ are vertex-disjoint,
        \item the paths in~$\mathcal{Q}_t$ are $\gamma$-non-zero. 
    \end{enumerate}
\end{lemma}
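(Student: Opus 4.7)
The plan exploits the hypothesis that every corridor of $W$ is $\gamma$-zero: every path lying inside $W$ is then $\gamma$-zero, and concatenating such wall paths to a $\gamma$-non-zero $\branch(W)$-path preserves its non-zero $\gamma$-value. The lemma is thus essentially a routing problem --- we must reroute $k$ of the many given $\gamma$-non-zero $\branch(W)$-paths into $W'$-handles for a suitable $c$-column-slice $W'$, disjointly from one another and from chosen row-extensions of the $\mathcal{P}_i$. I would set $w_{\ref*{lem:addlinkage}}(k,c)$ and $f_{\ref*{lem:addlinkage}}(k)$ to be sufficiently large polynomial bounds in $k$ and $c$; note that since $\bigcup_{i \in [t-1]} \mathcal{P}_i$ is a vertex-disjoint family of $4k(t-1)$ handles with endpoints in the first and last columns of $W$, the integer $t$ is implicitly bounded by the order of $W$.

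A key observation for the first step is that for any $c$-column-slice $W'$, the row-extensions of the entire family $\bigcup_i \mathcal{P}_i$ to $W'$ automatically form a vertex-disjoint family of $W'$-handles: different handles have endpoints at distinct nails, so on each of the two sides of $W'$ their row segments occupy distinct rows. Thus the families $\mathcal{Q}_i$ for $i \in [t-1]$ can be obtained by arbitrarily selecting $k$ row-extensions from each $\mathcal{P}_i$ once $W'$ is fixed. I would choose $W'$ by a pigeonhole argument over the roughly $w_{\ref*{lem:addlinkage}}(k,c)$ possible starting columns so that at least $4k$ of the given $\gamma$-non-zero $\branch(W)$-paths have both endpoints in columns of $W$ strictly outside $W'$ and are internally disjoint from $V(W')$.

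To build $\mathcal{Q}_t$, I would iteratively construct $k$ new $\gamma$-non-zero $W'$-handles. At each iteration, pick an unused $\gamma$-non-zero path $R$ with endpoints $u$ and $v$ outside $W'$, and apply Menger's theorem inside the subgraph of $W$ obtained by deleting the vertices already used (those in $\bigcup_{i<t} \mathcal{Q}_i$, the handles built so far, and interiors of other unused $\gamma$-non-zero paths). This yields wall paths from $u$ and $v$ to degree-$2$ nails in the first and last columns of $W'$, using the top or bottom rows of $W$ to circumnavigate $W'$ if $u$ and $v$ happen to lie on the same side. Concatenating with $R$ yields a $W'$-handle, and since the two wall paths are nail-to-nail paths in $W$ and hence $\gamma$-zero, the resulting handle is $\gamma$-non-zero.

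The main obstacle is maintaining connectivity throughout the iteration: at each step, $O(k + t)$ additional vertices become forbidden, and we must ensure that enough rows on each side of $W'$ (and enough of the top and bottom rows for circumnavigation) remain free to support the next Menger step. Bounding $t$ by the order of $W$ and choosing $w_{\ref*{lem:addlinkage}}(k,c)$ polynomial in $k$ and $c$ ensures that throughout the iteration, a sufficient portion of $W$ remains untouched, so every Menger step succeeds and delivers the desired $\mathcal{Q}_t$.
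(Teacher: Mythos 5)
The central gap in your plan is that it implicitly treats the given $\gamma$-non-zero $\branch(W)$-paths as if they met $W$ only in their endpoints. A $\branch(W)$-path is required only to start and end at degree-$3$ nails; its internal vertices may lie inside the corridors of $W$ in an entirely uncontrolled way, and a single such path may traverse essentially all of $W$, touching every row and every column. Your pigeonhole step (``so that at least $4k$ of the given $\gamma$-non-zero paths\ldots\ are internally disjoint from $V(W')$'') can therefore fail outright: each path may block every candidate column-slice, so no $W'$ survives. The same difficulty defeats the iterative Menger argument: you propose to delete ``interiors of other unused $\gamma$-non-zero paths'' before routing inside $W$, but those interiors are not $O(k+t)$ vertices --- each one can occupy an arbitrarily large fraction of $W$, so no polynomial choice of $w_{\ref*{lem:addlinkage}}(k,c)$ guarantees that the remaining graph stays connected enough to supply the next wall path.

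The paper copes with exactly this difficulty by first applying a ``breaking'' lemma (Lemma~\ref{lem:breaking}) that replaces each $\branch(W)$-path by a subpath $U$ that is still ``$\gamma$-breaking'' but whose intersection with $W$ is confined to at most~$12$ corridors; an iterative bookkeeping over $3$-column- and $3$-row-slices (Lemma~\ref{lem:addlinkage1}) then localizes these controlled subpaths and produces a single $c$-column-slice $W'$ together with a large family of $\gamma$-non-zero $W'$-handles, invoking Wollan's $A$-path theorem (Theorem~\ref{thm:tpath}) to extract $k$ that are pairwise disjoint. Only after that does the paper reconcile the new handles with the given families $\mathcal{P}_i$, and it does so by an independent ``separating'' lemma (Lemma~\ref{lem:separating}) rather than by deleting the $\mathcal{Q}_i$ ahead of a Menger step. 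Your observation that row-extensions of a vertex-disjoint family of $W$-handles remain vertex-disjoint is correct and is also used in the paper, but without an analogue of the breaking lemma your routing scheme for $\mathcal{Q}_t$ cannot be made to work.
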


Before we can prove this lemma, we need to establish a variety of other lemmas. 
At the heart of the proof, we have the following natural result, which we will iteratively apply to decouple the sets of vertex-disjoint handles that we will construct. 
Huynh, Joos, and Wollan~\cite[Lemma~27]{HuynhJW2017} proved a somewhat similar result for oriented group-labelled graphs.

\begin{lemma}
    \label{lem:separating}
    Let~${k, t}$ be positive integers, 
    let~${\Gamma}$ be an abelian group, 
    let~${(G,\gamma)}$ be a $\Gamma$-labelled graph, 
    and let~$T$ be a subset of~${V(G)}$. 
    For each ${i \in [t-1]}$, let~$\mathcal{P}_i$ be a set of $T$-paths of size~${4k}$ 
    such that the paths in~${\bigcup_{i \in [t-1]} \mathcal{P}_i}$ are vertex-disjoint. 
    
    If~$G$ contains~$k$ vertex-disjoint ${\gamma}$-non-zero $T$-paths, 
    then there exist a set~$\mathcal{Q}_t$ of $k$ vertex-disjoint ${\gamma}$-non-zero $T$-paths and a subset~${\mathcal{Q}_i \subseteq \mathcal{P}_i}$ of size~$k$ for each~${i \in [t-1]}$ 
    so that the paths in~${\bigcup_{i \in [t]} \mathcal{Q}_i}$ are vertex-disjoint. 
\end{lemma}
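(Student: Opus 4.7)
The plan is to proceed by an exchange argument. Among all families of $k$ vertex-disjoint $\gamma$-non-zero $T$-paths in $G$, choose $\mathcal{R} = \{R_1, \ldots, R_k\}$ minimizing
\[
    \Phi(\mathcal{R}) \;:=\; \sum_{R \in \mathcal{R}} \; \sum_{i \in [t-1]} \; \bigl|\{\, P \in \mathcal{P}_i \colon V(R) \cap V(P) \neq \emptyset \,\}\bigr|.
\]
The central claim is that in such a minimizer, every $R \in \mathcal{R}$ meets at most three paths of each $\mathcal{P}_i$. Given this claim, at most $3k$ paths of each $\mathcal{P}_i$ meet $\bigcup \mathcal{R}$, so at least $k$ paths of $\mathcal{P}_i$ remain vertex-disjoint from $\bigcup \mathcal{R}$; setting $\mathcal{Q}_t := \mathcal{R}$ and choosing $\mathcal{Q}_i$ to be any $k$ such untouched paths yields the desired collections, because the paths in $\bigcup_i \mathcal{P}_i$ are mutually vertex-disjoint.

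Suppose for contradiction some $R \in \mathcal{R}$ meets paths $P_1, \ldots, P_s$ (with $s \geq 4$) in some $\mathcal{P}_{i_0}$, indexed so that the first intersections $u_l$ with $R$ appear in this order as $R$ is traversed from endpoint $a$ to endpoint $b$; let $v_l$ denote the last intersection of $R$ with $P_l$. The candidate replacements for $R$ are the $T$-paths $R_a^{l,x} := R[a, u_l] \cup P_l[u_l, x]$ for $l \in [s-1]$ and $R_b^{l,x} := R[v_l, b] \cup P_l[v_l, x]$ for $l \in \{2, \ldots, s\}$ (with $x \in \{p_l, q_l\}$ an endpoint of $P_l$), together with $P_l$ itself when $\gamma(P_l) \neq 0$. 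Since $\bigcup_i \mathcal{P}_i$ is vertex-disjoint, $R_a^{l, x}$ meets at most $\{P_1, \ldots, P_l\}$ in $\mathcal{P}_{i_0}$, $R_b^{l, x}$ meets at most $\{P_l, \ldots, P_s\}$, each candidate meets no more paths of $\mathcal{P}_{i'}$ (for $i' \neq i_0$) than $R$ does, and $P_l$ meets only itself in $\bigcup_i \mathcal{P}_i$. Hence substituting any valid candidate for $R$ strictly decreases $\Phi$, contradicting minimality---provided the substitute is $\gamma$-non-zero and vertex-disjoint from $\mathcal{R} \setminus \{R\}$.

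Producing a valid substitute is the main work. For $\gamma$-non-zero, if some $\gamma(P_l) \neq 0$ with $l \in \{2, \ldots, s-1\}$ then $P_l$ itself is a candidate; otherwise all such $\gamma(P_l)$ vanish, and a direct calculation (arranged, by a length tiebreaker in the choice of $\mathcal{R}$, so that $R$ meets each $P_l$ at a single vertex) gives the identity $\gamma(R_a^{l, p_l}) + \gamma(R_b^{l, q_l}) = \gamma(R) + \gamma(P_l) = \gamma(R) \neq 0$, forcing at least one of the two candidates to be $\gamma$-non-zero. The main obstacle is vertex-disjointness: the substitute uses internal vertices of $P_l$, which may meet another $R_{j'} \in \mathcal{R}$. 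When this happens I would perform a joint rerouting---if $R_{j'}$ meets $P_l$ at a vertex $u'$, then labelling the endpoints of $P_l$ so that $u_l$ lies between $p_l$ and $u'$ on $P_l$, the $T$-paths $R[a, u_l] \cup P_l[u_l, p_l]$ and $R_{j'}[a', u'] \cup P_l[u', q_l]$ are vertex-disjoint, and the analogous group identities determine labellings of $p_l, q_l$ making both $\gamma$-non-zero, delivering a simultaneous replacement of $R$ and $R_{j'}$ that still strictly decreases $\Phi$.
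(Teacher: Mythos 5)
Your high-level plan — an extremal choice of the $\gamma$-non-zero family followed by a local exchange to bound how much it touches each $\mathcal{P}_i$ — is the same genre as the paper's argument, and your final count ($\leq 3k$ paths of each $\mathcal{P}_i$ touched, hence $\geq k$ survive) agrees. However, the objective you minimize, $\Phi$, is not the one the paper uses, and this choice is precisely what creates the gaps you flag at the end.

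The central unresolved issue is disjointness from the rest of~$\mathcal{R}$. Every candidate you write down, including $R_a^{l,x}$, $R_b^{l,x}$, and $P_l$ itself, runs through internal vertices of $P_l$, and nothing in your minimization forces these to avoid the other $\gamma$-non-zero paths. The ``joint rerouting'' fallback is only a sketch: it implicitly assumes at most one other $R_{j'}$ meets $P_l$, it does not say which endvertex of $R_{j'}$ plays the role of $a'$ nor why the dropped half $R_{j'}[u',b']$ causes no harm, and the claim that ``analogous group identities'' make \emph{both} replacements $\gamma$-non-zero simultaneously is asserted rather than checked (getting one non-zero from a sum being non-zero is immediate, getting two non-zero from one identity is not). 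When three or more paths of $\mathcal{R}$ hit $P_l$, the argument does not say what to do at all. The second gap is the ``length tiebreaker.'' Your algebra $\gamma(R_a^{l,p_l})+\gamma(R_b^{l,q_l})=\gamma(R)$ genuinely requires $u_l=v_l$; when $u_l\neq v_l$ the identity picks up the term $\gamma(R[u_l,v_l])+\gamma(P_l[u_l,v_l])$, which need not vanish. But there is no reason a $\Phi$-minimizer can be refined to hit each $P_l$ exactly once: the obvious reroute $R[a,u_l]\cup P_l[u_l,v_l]\cup R[v_l,b]$ may be longer than $R$, may fail to lower $\Phi$, and may itself collide with $\mathcal{R}\setminus\{R\}$, so neither a length nor a $\Phi$ tiebreaker delivers single intersections.

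The paper sidesteps both problems by optimizing a different quantity and by changing which object it looks at locally. It fixes $\mathcal{Q}_t$ minimizing the number of edges of $\mathcal{Q}_t$ \emph{not} contained in any path of $\bigcup_i\mathcal{P}_i$; since pieces of $P$'s contribute nothing to that count, any reroute that trades an ``external'' segment of some $Q$ for a segment of a $P$ strictly decreases it, no tiebreaker needed. It then throws away the (at most $2k$) paths of $\mathcal{P}_j$ containing an endvertex of $\mathcal{Q}_t$, and among the rest finds two paths $P_1,P_2$ whose \emph{first} point of contact with $\bigcup\mathcal{Q}_t$ lies on the same $Q$. Because $q_i$ is the first hit with all of $\mathcal{Q}_t$, the prefix $P_i'=P_i[p_i,q_i]$ is automatically disjoint from $\mathcal{Q}_t\setminus\{Q\}$, so every one of the three candidate replacements built from $Q_1,Q_2,Q_3,P_1',P_2'$ is a legal $T$-path disjoint from the rest of the packing. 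The three resulting $\gamma$-zero equations then directly contradict $\gamma(Q)\neq 0$. In short, the paper's proof converts the disjointness problem into a bookkeeping-free pigeonhole statement by restricting which $P$'s are considered and where on them the argument lives, whereas your proof still has to pay for disjointness at exchange time, and that bill is not settled.
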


\begin{proof}
    Let~$\mathcal{Q}_t$ be a set of~$k$ vertex-disjoint $\gamma$-non-zero $T$-paths such that 
    the number of edges of paths in~${\mathcal{Q}_t}$ that are not contained in any path in~${\mathcal{P} := \bigcup_{i \in [t-1]} \mathcal{P}_i}$ is as small as possible. 
    For each~${j \in [t-1]}$, let~$\mathcal{P}_j^\ast$ be the set of paths in~$\mathcal{P}_j$ that do not contain an endvertex of a path in~$\mathcal{Q}_t$. 
    We have~${\abs{\mathcal{P}_j^\ast} \geq \abs{\mathcal{P}_j}-2\abs{\mathcal{Q}_t} = 2k}$.
    Let~$\mathcal{P}_j^{**}$ be the set of all paths in~$\mathcal{P}_j^\ast$
    intersecting a path in~$\mathcal{Q}_t$. 
    
    Assume that for some~${j \in [t-1]}$ we have~${\abs{ \mathcal{P}_j^{**} } \geq k+1}$. 
    Then there are two paths~${P_1, P_2 \in \cP_j^{**}}$ such that when traversing from an endvertex~$p_i$ of~$P_i$ for each~${i \in [2]}$, $P_1$ and~$P_2$ first meet the same path~${Q \in \mathcal{Q}_t}$. 
    For each~${i \in [2]}$, let~$q_i$ be the first intersection of~$P_i$ and~$Q$ when traversing~$P_i$ from~$p_i$. 
    
    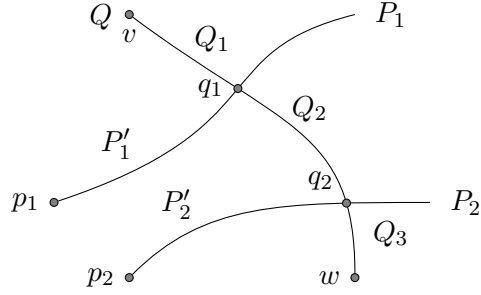
\begin{figure}
        \begin{tikzpicture}
            \tikzstyle{w}=[circle,draw,fill=black!50,inner sep=0pt,minimum width=3pt]
            \node at (0,2) [w,label=left:$p_1$] (vp1) {};
            \node at (1,1) [w,label=left:$p_2$] (vp2) {};
            \node at (1,4.5) [w,label=below:$v$] (v){};
            \node at (4,1) [w,label=left:$w$] (w){};
            \draw [name path=p1] (vp1) .. controls (3,3) and (2,4) ..  (4,4.5)
            node[pos=0.1,label=$P_1'$]{}
            node [pos=1,label=right:$P_1$]{};
            \draw [name path=q] (v) .. controls (3,3) and (4,3) .. (w)
                node [pos=0,label=left:$Q$]{}
                node [pos=0.55,label=$Q_2$]{} 
                node [pos=0.2,label=$Q_1$]{} 
                node [pos=0.9,label=right:$Q_3$]{} ;
            \draw [name path=p2] (vp2) .. controls (2,2) and (3,2) .. (5,2)
                node [pos=0.2,label=$P_2'$]{}
                node [pos=1,label=right:$P_2$]{};
            \node [name intersections={of=p1 and q}] at (intersection-1) [w,label=left:$q_1$](vq1) {};
            \node [name intersections={of=p2 and q}] at (intersection-1) [w,label=above left:$q_2$](vq2) {};
        \end{tikzpicture}
        \caption{Segments of the paths~$P_1$,~$P_2$ and~$Q$ mentioned in Lemma~\ref{lem:separating}.}
        \label{fig:separating}
    \end{figure}
    
    Let~$v$ and~$w$ be the endvertices of~$Q$ such that the distance between~$v$ and~$q_1$ in~$Q$ is smaller than the distance between~$v$ and~$q_2$ in~$Q$. 
    Let~${Q_1, Q_2, Q_3}$ be the subpaths of~$Q$ from~$v$ to~$q_1$, from~$q_1$ to~$q_2$, and 
    from~$q_2$ to~$w$, respectively. 
    Also, for each~${i \in [2]}$, let~$P_i'$ be the subpath of~$P_i$ from~$p_i$ to~$q_i$,
    see Figure~\ref{fig:separating}.
    Since the paths of~${\bigcup_{i \in [t-1]} \mathcal{P}_i}$ are vertex-disjoint and~${v, w \notin V(P_1 \cup P_2)}$, both~$Q_1$ and~$Q_3$ contain edges not in a path of~${\bigcup_{i \in [t-1]} \mathcal{P}_i}$; for instance, edges incident with~$q_1$ or~$q_2$.

    By assumption, ${\gamma(Q) = \gamma(Q_1) + \gamma(Q_2) + \gamma(Q_3)}$ is non-zero. 
    If there is a~${\{v, w, p_1, p_2\}}$-path~$R$ in~${Q \cup P_1' \cup P_2'}$ such that~${R \neq Q}$ and~${\gamma(R) \neq 0}$, 
    then by replacing~$Q$ with~$R$, the number of edges of paths in~$\mathcal{Q}_t$ that are not contained in any path in~${\bigcup_{i \in [t-1]} \mathcal{P}_i}$ decreases. 
    Therefore, by the assumption on~$\mathcal{Q}_t$, we have~${\gamma(R) = 0}$ for every such path~$R$. 
    It implies that
    \begin{enumerate}
        [label=(\arabic*)]
        \item \label{item:separating1} ${\gamma(Q_1)+\gamma(Q_2)+\gamma(P_2') = 0}$, 
        \item \label{item:separating2} ${\gamma(P_1')+\gamma(Q_2)+\gamma(P_2') = 0}$, 
        \item \label{item:separating3} ${\gamma(P_1')+\gamma(Q_2)+\gamma(Q_3) = 0}$.
    \end{enumerate}
    The equations~\ref{item:separating1} and~\ref{item:separating2} imply that~${\gamma(Q_1) = \gamma(P_1')}$ 
    and, similarly, the equations~\ref{item:separating2} and~\ref{item:separating3} imply that~${\gamma(Q_3) = \gamma(P_2')}$.
    But these imply that 
    \[
        0 = \gamma(P_1')+\gamma(Q_2)+\gamma(P_2') = \gamma(Q_1) + \gamma(Q_2) + \gamma(Q_3) \neq 0, 
    \]
    which is a contradiction. 
    We conclude for all~${j \in [t-1]}$ that~${\abs{ \mathcal{P}_j^{**} } \leq k}$, 
    and thus~${\abs{\mathcal{P}_j^\ast\setminus \mathcal{P}_j^{**}} \geq k}$.
    
    For each~${j \in [t-1]}$, let~$\mathcal{Q}_j$ be a set of~$k$ paths in~${\mathcal{P}_j^\ast\setminus \mathcal{P}_j^{**}}$. 
    Then for each~${i \in [t-1]}$, we have that~${\mathcal{Q}_i \subseteq \mathcal{P}_i}$, and
    the paths in~${\bigcup_{i \in [t]} \mathcal{Q}_i}$
    are vertex-disjoint, as required. 
\end{proof}

Lemma~\ref{lem:addlinkage} mentions a set of vertex-disjoint $\branch(W)$-paths in~$G$, but note that these may arbitrarily intersect the internal vertices of corridors of~$W$. 
The following technical lemma allows us to take subpaths of these paths which intersect the corridors of~$W$ in a more controlled manner. 

\begin{lemma}
    \label{lem:breaking}
    Let~${\Gamma}$ be an abelian group, 
    let~${(G, \gamma)}$ be a $\Gamma$-labelled graph 
    and let~${H \subseteq G}$ be a subdivision of a $3$-connected graph 
    such that every corridor of~$H$ is $\gamma$-zero. 
    If~$G$ contains a $\gamma$-non-zero $\branch(H)$-path~$P$, then 
    there exist a subpath~$U$ of~$P$ 
    and a set~$\mathcal{X}$ of at most~$12$ corridors of~$H$ satisfying the following properties:
    \begin{enumerate}[label=(\roman*)]
        \item\label{item:b1} ${H \cap U}$ is a subgraph of~${\bigcup \mathcal{X}}$.
        \item\label{item:b2} For any subgraph~${H' \subseteq H}$ which is a subdivision of a $3$-connected graph with~${\bigcup \mathcal{X} \subseteq H'}$, 
        and any subset~${T \subseteq \branch(H')}$ 
        with~${\abs{T} \geq 3}$, 
        there is a $\gamma$-non-zero $T$-path in~${H'\cup U}$.  
    \end{enumerate} 
\end{lemma}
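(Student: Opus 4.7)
The plan is to take $U$ to be an inclusion-minimal subpath of~$P$ whose two endvertices lie in~$V(H)$ and which satisfies $\gamma(U) \neq 0$; such a subpath exists because $P$ itself qualifies, as its endvertices lie in $\branch(H) \subseteq V(H)$. Writing $a, b$ for the endvertices of~$U$, minimality forces $V(U) \cap V(H) = \{a, b\}$: a third contact vertex~$c$ would split~$U$ into the shorter subpaths $U[a, c]$ and $U[c, b]$, each forced by minimality to be $\gamma$-zero, whence $\gamma(U) = 0$, a contradiction. In particular, the interior of~$U$ lies entirely outside $V(H)$, and $H \cap U$ reduces to $\{a, b\}$ together with at most one edge (which appears only in the degenerate case where $a$ and~$b$ are adjacent along a common corridor of~$H$).

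The set~$\mathcal{X}$ will be a small bounded neighbourhood of corridors of~$H$ around each of $a$ and~$b$. If an endvertex is in~$\branch(H)$, I include three corridors of~$H$ incident with it; if an endvertex is interior to a corridor~$C$ of~$H$, I include~$C$ together with three corridors of~$H$ incident with each of the two branch-vertex endvertices of~$C$. A routine case analysis yields $\abs{\mathcal{X}} \leq 12$, and since any edge of $H \cap U$ is automatically contained in a corridor of~$\mathcal{X}$ by this construction, property~\ref{item:b1} follows.

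For property~\ref{item:b2}, fix~$H'$ and~$T$ as in the hypothesis. The plan is to construct a $\gamma$-non-zero cycle~$O$ in $H' \cup U$ by closing~$U$ with a suitable $(a,b)$-path~$Q$ in~$H'$, and then invoke Lemma~\ref{lem:non-zero-path-in-cycle}. Because every corridor of~$H$ is $\gamma$-zero, when $a, b \in \branch(H)$ any $(a,b)$-path in~$H'$ assembled from full corridors of~$H$ is itself $\gamma$-zero, so setting $O := U \cup Q$ gives $\gamma(O) = \gamma(U) \neq 0$; the corridors of~$\mathcal{X}$ at~$a$ and~$b$ together with the $3$-connectivity of~$H'$ ensure that such a~$Q$ is available inside~$H'$. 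When~$a$ or~$b$ is interior to a corridor $C_a$ or~$C_b$, partial corridors contribute to~$\gamma(Q)$, but the additional corridors placed at the branch-vertex endvertices of $C_a$ and~$C_b$ leave enough freedom, by choosing which endvertex of $C_a$ and of~$C_b$ to route through and which internally disjoint $H'$-path to use between them, to pick a~$Q$ with $\gamma(Q) \neq -\gamma(U)$.

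Once~$O$ has been constructed, $V(O) \cap V(H')$ contains at least three branch vertices of~$H'$ coming from the corridors in~$\mathcal{X}$, while $\abs{T} \geq 3$, so Menger's theorem (Theorem~\ref{thm:menger}) applied in the $3$-connected subdivision~$H'$ produces three vertex-disjoint $(V(O), T)$-paths in~$H'$; feeding these and~$O$ into Lemma~\ref{lem:non-zero-path-in-cycle} yields the required $\gamma$-non-zero $T$-path inside $H' \cup U$. The main obstacle is justifying that the closing path~$Q$ can always be chosen with $\gamma(Q) \neq -\gamma(U)$ regardless of which admissible $H'$ extending $\bigcup \mathcal{X}$ is presented; this is precisely what forces $\mathcal{X}$ to carry a constant-size neighbourhood around each endvertex of~$U$ and accounts for the numeric bound of~$12$.
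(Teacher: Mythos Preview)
Your approach has a genuine gap at the step you flag as ``the main obstacle'': the freedom you claim in choosing the closing path~$Q$ does not always exist.  Concretely, take $\Gamma=\mathbb{Z}_2$, let $a$ be interior to a corridor $C_a$ whose two half-corridors from~$a$ both have $\gamma$-value~$0$, and let $b$ be interior to a distinct corridor $C_b$ whose two half-corridors from~$b$ both have $\gamma$-value~$1$ (so $\gamma(C_b)=1+1=0$).  If your minimal $U$ runs from~$a$ to~$b$ with $\gamma(U)=1$, then \emph{every} $(a,b)$-path $Q$ in any admissible $H'$ decomposes as a half of~$C_a$, a path of full corridors, and a half of~$C_b$; hence $\gamma(Q)=0+0+1=1=-\gamma(U)$ and $U\cup Q$ is $\gamma$-zero regardless of which endpoints of $C_a,C_b$ you route through.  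Such a $U$ really can arise as an inclusion-minimal $\gamma$-non-zero $V(H)$-subpath of a legitimate $\gamma$-non-zero $\branch(H)$-path~$P$ (for instance, $P$ going branch\,$\to a\to b\to$\,branch with all three hops outside~$H$ labelled~$1$).  The degenerate case you mention---$U$ a single edge of~$H$ interior to one corridor---fails for the same reason: the unique closing route through the remainder of that corridor forces $\gamma(Q)=-\gamma(U)$.

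The paper avoids this by \emph{not} insisting that the interior of~$U$ miss~$H$.  It introduces the notion of a $\gamma$-\emph{breaking} path (one for which closing through~$H$ gives a non-zero cycle) and proves that some subpath $U$ of~$P$ is $\gamma$-breaking while its interior meets at most two corridors of~$H$; the extra slack of allowing two ``used'' corridors is exactly what lets the argument absorb the order-$2$ obstruction you run into.  Your minimal-$U$ reduction throws this flexibility away.  To salvage your line you would need to prove that among the minimal $\gamma$-non-zero $V(H)$-subpaths of~$P$ at least one is $\gamma$-breaking, and then pick that one; this is essentially the content of the paper's Claim and does not follow from the minimality argument you give.
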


\begin{proof}
    For each vertex~$z$ of~$H$, we define~${x_{z,1}, x_{z,2} \in \Gamma}$ 
    and a path~$X_z$ as follows.
    \begin{itemize}
        \item If~$z$ has degree~$2$ in~$H$, then 
        let~$X_z$ be the corridor of~$H$ containing~$z$, 
        let~${x_{z,1} := \gamma(X_{z,1})}$,
        and~${x_{z,2} := \gamma(X_{z,2})}$ 
        where~$X_{z,1}$ and~$X_{z,2}$ are the two distinct subpaths of~$X_z$ from~$z$ to the endvertices of~$X_z$.
        \item Otherwise let~$X_z$ be a path of length~$0$ containing~$z$, let~${x_{z,1} := 0}$, and~${x_{z,2} := 0}$.
    \end{itemize}

    A path~$Q$ from~${a \in V(H)}$ to~${b \in V(H)}$ in~$G$  
    with~${x_{a,1} = x_{a,2}}$ and~${x_{b,1} = x_{b,2}}$ 
    is \emph{$\gamma$-preserving} if~${x_{a,1} + \gamma(Q) + x_{b,1} = 0}$, 
    and is \emph{$\gamma$-breaking} otherwise. 
    We first prove the following claim. 
    
    \begin{claim*}
        ${P \cup H}$ contains a $\gamma$-breaking path~$U$ such that 
        \begin{enumerate}
            [label=(\alph*)]
            \item\label{item:breaking1} 
                both endvertices of~$U$ are in~$H$, 
            \item\label{item:breaking3} 
                at most two corridors of~$H$ intersect the set of internal vertices of~$U$, and
            \item\label{item:breaking4} 
                for each endvertex~$z$ of~$U$, either~${z \in \branch(H)}$
                or~$X_z$ contains no internal vertex of~$U$.
        \end{enumerate}
    \end{claim*}

    \begin{proofofclaim}
        Suppose that this claim does not hold. 
        We first show that 
        \begin{enumerate}[label=($\ast$)]
            \item \label{item:breakingast} if~$P$ contains a $V(H)$-path~$Q$ from~$a$ to~$b$ where~${X_a \neq X_b}$, then~$Q$ is a $\gamma$-preserving path.
        \end{enumerate}
        Because there are no two distinct corridors of~$H$ with the same set of endvertices, $X_a$ intersects at most one of~$X_{b,1}$ and~$X_{b,2}$.
        If~${\gamma(X_{a,1}) \neq \gamma(X_{a,2})}$ and $X_{b,i}$ does not intersect~$X_a$ for some~${i \in \{1,2\}}$, then ${X_{a,1} \cup Q \cup X_{b,i}}$ or~${X_{a,2} \cup Q \cup X_{b,i}}$ is $\gamma$-breaking.
        It is not difficult to verify that such a $\gamma$-breaking path satisfies the required properties, contradicting the assumption.
        Thus, ${\gamma(X_{a,1}) = \gamma(X_{a,2})}$, and by symmetry, 
        ${\gamma(X_{b,1})= \gamma(X_{b,2})}$. 
        Now, by the assumption, $Q$ is $\gamma$-preserving.
        This shows~\ref{item:breakingast}. 
        \smallskip
        
        Let~$M_1$ be the set of $V(H)$-paths in~$P$ whose endvertices are internal vertices of distinct corridors of~$H$. 
        Let~$M_2$ be the set of maximal subpaths of~${P - \bigcup_{Q \in M_1} E(Q)}$ of length at least~$1$.
        Note that for each~${R \in M_2}$,
        at most one corridor of~$H$ intersects the set of internal vertices of~$R$.
        
        Let~$v$ and~$w$ be the endvertices of~$P$, and let~${t := \abs{M_1}+\abs{M_2}}$. 
        Note that $M_1\cup M_2$ is a partition of~$P$ into $t$ edge-disjoint subpaths, each having length at least~$1$.
        Let~$P_1$ be the path in~${M_1 \cup M_2}$ containing~$v$, and for each~${i \in [t-1]}$ let~$P_{i+1}$ be the unique path in~${(M_1 \cup M_2) \setminus \{P_j \colon j \in [i]\}}$ sharing an endvertex, say~$v_i$, with~$P_i$. 
        By~\ref{item:breakingast}, we have~${\gamma(X_{v_i,1}) = \gamma(X_{v_i,2})}$ for all~${i \in [t-1]}$. 
        Note that~${\gamma(X_{v,2}) = \gamma(X_{w,1}) = 0}$. 
        
        We claim that there is a $\gamma$-breaking path in~$M_2$.
        Suppose for a contradiction that all paths in~$M_2$ are $\gamma$-preserving. 
        By~\ref{item:breakingast}, all paths in~$M_1$ are $\gamma$-preserving and therefore 
        \[
            \sum_{i=0}^{t-1} \big( \gamma(X_{v_i,2}) + \gamma(P_{i+1}) + \gamma(X_{v_{i+1},1}) \big) = 0. 
        \]
        As every corridor of~$H$ is $\gamma$-zero,
        we know that 
        \[
            \sum_{i=0}^{t-1} \big( \gamma(X_{v_i,2}) + \gamma(X_{v_{i+1},1}) \big) = 2 \sum_{i =1}^{t-1} \gamma(X_{v_i}) = 0. 
        \]
        This implies that~${\sum_{i =0}^{t-1} \gamma(P_{i+1}) = \gamma(P) = 0}$, which contradicts the fact that~$P$ is $\gamma$-non-zero.
        So, we conclude that there exists~${j \in [t]}$ such that~${P_j \in M_2}$ and~$P_j$ is $\gamma$-breaking.
        
        We obtain that the path~$P'$ defined by
        \[
            P' :=
            \begin{cases}
                P_1 \cup P_2 & \textnormal{ if } j = 1,\\
                P_{j-1} \cup P_j\cup P_{j+1} & \textnormal{ if } j \in [t-1] \setminus \{1\},\\
                P_{t-1}\cup P_t & \textnormal{ if } j = t,
            \end{cases}
        \]
        has the desired properties. 
    \end{proofofclaim}
    
    Let~$U$ be a path obtained by the previous claim.
    Let~$\mathcal{X}_1$ be the set of corridors of~$H$ intersecting the set of internal vertices of~$U$. 
    By the previous claim, we have~${\abs{\mathcal{X}_1} \leq 2}$.

    Let~$a$,~$b$ be the endvertices of~$U$.
    For~${x \in \{a,b\}}$, let~$Y_x$ be a corridor of~$H$ containing~$x$. 
    Thus if $x$ has degree~$2$ in~$H$, then~${Y_x = X_x}$
    and otherwise~$Y_x$ is an arbitrary corridor of~$H$ ending at~$x$.
    Let~$\mathcal{X}_2$ be a minimal set of corridors of~$H$ such that~${Y_a, Y_b \in \mathcal{X}_2}$ and each endvertex of~$Y_a$ and~$Y_b$ is contained in at least three corridors in~$\mathcal{X}_2$.
    Then~${\abs{\mathcal{X}_2} \leq 10}$. 

    Let~${\mathcal{X} := \mathcal{X}_1 \cup \mathcal{X}_2}$. 
    Then~${\abs{\mathcal{X}} \leq 12}$ and~\ref{item:b1} holds. 
    It remains to show~\ref{item:b2}.
    Let~$H'$ be a subgraph of~$H$ which is a subdivision of a $3$-connected graph~$\hat{H}'$
    such that~${\bigcup \mathcal{X}}$ is a subgraph of~$H'$
    and let~$T$ be subset of~${\branch(H')}$ of size at least~$3$. 
    Note that~${\branch(H') \subseteq \branch(H)}$ and 
    therefore every corridor of~$H'$ is $\gamma$-zero.
    By the construction of~$\mathcal{X}_2$,
    each endvertex of~$U$ is contained in some corridor of~$H$ which is also a corridor of~$H'$ and every corridor of~$H$ intersecting the set of internal vertices of~$U$ is also a corridor of~$H'$.
    Hence from the claim, we deduce that 
    \begin{enumerate}
        [label=(\alph*$'$)]
        \item\label{item:breaking1'} 
            both endvertices of~$U$ are in~$H'$, 
        \item\label{item:breaking3'} 
            at most two corridors of~$H'$ intersect the set of internal vertices of~$U$, and
        \item\label{item:breaking4'} 
            for each endvertex~$z$ of~$U$, either~${z \in \branch(H')}$
            or the corridor of $H'$ containing $z$ contains no internal vertex of~$U$.
    \end{enumerate}
    
    Since~$\hat{H}'$ is $3$-connected, there are two disjoint paths~$Q_1$, $Q_2$ in~$H'$ between the endvertices of~$U$ and the set~$T$. 
    If~${Q_1 \cup Q_2}$ does not contain an internal vertex of~$U$, then~${Q_1 \cup U \cup Q_2}$ is as desired, since~$U$ is $\gamma$-breaking and all corridors of~$H$ are $\gamma$-zero.

    If~${Q_1 \cup Q_2}$ contains an internal vertex of~$U$, then~${Q_1 \cup Q_2}$ contains a corridor~$R$ of~$H'$ intersecting the set of internal vertices of~$U$.
    Choose~$x$ among two endvertices of~$R$ that is closer to~$T$ in~${Q_1 \cup Q_2}$. 
    Then~$x$ is not an endvertex of~$U$.
    Since~$x$ is in at least~$3$ corridors of~$H'$, by property~\ref{item:breaking3'},~${x \notin V(U)}$.
    
    Since~$\hat{H}'$ is $3$-connected, by properties~\ref{item:breaking3'} and~\ref{item:breaking4'}, ${H'-E(\bigcup \mathcal{X}_1)}$ is connected.
    Thus,~$H'$ has a path~$Q$ connecting the endvertices of~$U$ which contains no internal vertex of~$U$. 
    The cycle~${O := Q \cup U}$ is $\gamma$-non-zero since~$U$ is $\gamma$-breaking. 
    Since~${\hat{H}'}$ is~$3$-connected, there are three vertex-disjoint paths from~$T$ to~${\{x\} \cup \branch(U)}$ in~$H'$.
    By extending one of the paths ending at~$x$ to an internal vertex of~$U$ through~$R$ if~${x \notin V(O)}$, 
    we obtain three vertex-disjoint~${(V(O),T)}$-paths in~${H' \cup U}$. 
    Hence Lemma~\ref{lem:non-zero-path-in-cycle} yields the desired result.
\end{proof}

In the next lemma, we extend subpaths from the previous lemma to handles of some suitable column-slice. 

\begin{lemma}
    \label{lem:addlinkage1}
    There exist functions~${w_{\ref*{lem:addlinkage1}}\colon \mathbb{N}^2 \to \mathbb{N}}$ and~$f_{\ref*{lem:addlinkage1}} \colon \mathbb{N} \to \mathbb{N}$ satisfying the following. 
    Let~$k$ and~$c$ be positive integers with~${c \geq 3}$, let~${\Gamma}$ be an abelian group, and let~${(G, \gamma)}$ be a $\Gamma$-labelled graph.
    Let~$W$ be a wall in~$G$ of order at least~${w_{\ref*{lem:addlinkage1}}(k,c)}$ such that all corridors of~$W$ are $\gamma$-zero. 
    If~$G$ contains ${f_{\ref*{lem:addlinkage1}}(k)}$ vertex-disjoint
    $\gamma$-non-zero
    $\branch(W)$-paths, 
    then there exist a $c$-column-slice~$W'$ of~$W$ and~$k$ vertex-disjoint $\gamma$-non-zero $W'$-handles in~$G$. 
\end{lemma}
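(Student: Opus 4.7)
The plan is to apply Lemma~\ref{lem:breaking} to each given $\gamma$-non-zero $\branch(W)$-path to obtain a well-localized subpath, then to use pigeonhole to select $k$ of these subpaths whose wall-footprints lie in disjoint portions of $W$, and finally to complete each selected subpath into a $\gamma$-non-zero $W'$-handle by routing through corridors of $W$ lying outside $V(W')$.

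First, applying Lemma~\ref{lem:breaking} with $H := W$ to each of the $f_{\ref*{lem:addlinkage1}}(k)$ given vertex-disjoint $\gamma$-non-zero $\branch(W)$-paths $P_i$ (noting that $W$ is a subdivision of a $3$-connected planar graph by Remark~\ref{rmk:wall3connected} and that its corridors are all $\gamma$-zero by hypothesis), I obtain for each $i$ a subpath $U_i$ of $P_i$ and a set $\mathcal{X}_i$ of at most $12$ corridors of $W$ with $W \cap U_i \subseteq \bigcup \mathcal{X}_i$. In particular, each $\bigcup \mathcal{X}_i$ is confined to a bounded number of consecutive rows and columns of $W$, and the $U_i$'s are vertex-disjoint. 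I then fix $W'$ as a $c$-column-slice of $W$ in the interior of $W$ and let $W_L$ denote the subwall of $W$ induced by the columns strictly to the left of $W'$. Taking $w_{\ref*{lem:addlinkage1}}(k,c)$ and $f_{\ref*{lem:addlinkage1}}(k)$ sufficiently large, a pigeonhole argument on the row-locations of the $\bigcup \mathcal{X}_i$ produces $k$ indices $i_1, \dots, i_k$ such that each $\bigcup \mathcal{X}_{i_j}$ lies in $V(W_L)$ and the $\bigcup \mathcal{X}_{i_j}$'s are contained in $k$ pairwise disjoint row-bands $B_1, \dots, B_k$ of $W_L$, separated by buffer rows.

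For each $j \in [k]$, I construct the $j$-th handle as follows. Let $a_j, b_j$ denote the endpoints of $U_{i_j}$, both contained in $\bigcup \mathcal{X}_{i_j} \subseteq V(W_L)$. Within $B_j$, I pick two distinct degree-$2$ nails $v_{1,j}, v_{2,j}$ of $W'$ in the first column of $W'$ which each admit a horizontal edge in $W$ into $V(W_L)$ (such nails exist in abundance by the structure of the elementary wall). Then I find vertex-disjoint wall paths $R_{1,j}$ from $v_{1,j}$ to $a_j$ and $R_{2,j}$ from $b_j$ to $v_{2,j}$, both lying in $W[V(B_j) \cup \{v_{1,j}, v_{2,j}\}]$, each built by concatenating whole corridors of $W$ together with at most one final partial corridor at $a_j$ or $b_j$. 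Since every full corridor is $\gamma$-zero, a direct computation shows that $\gamma(R_{1,j}) = x_{a_j,1}$ and $\gamma(R_{2,j}) = x_{b_j,1}$ in the notation of the proof of Lemma~\ref{lem:breaking}, so the $\gamma$-breaking property of $U_{i_j}$ gives
\[
    \gamma\bigl(R_{1,j} \cup U_{i_j} \cup R_{2,j}\bigr)
    = x_{a_j,1} + \gamma(U_{i_j}) + x_{b_j,1}
    \neq 0.
\]
The endpoints $v_{1,j}, v_{2,j}$ of this path are degree-$2$ nails of $W'$ in its first column, and its internal vertices lie in $V(W_L) \cup \bigl(V(U_{i_j}) \setminus V(W)\bigr) \subseteq V(G) \setminus V(W')$; hence it is a $\gamma$-non-zero $W'$-handle. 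The pairwise disjointness of the row-bands $B_j$ across $j \in [k]$ ensures vertex-disjointness of the resulting $k$ handles.

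The main technical obstacle is the simultaneous linkage of the $k$ pairs $(R_{1,j}, R_{2,j})$ within $W_L$: guaranteeing that each row-band $B_j$ contains both a sufficient supply of suitable degree-$2$ nails of $W'$ with horizontal edges into $W_L$ and enough internal flexibility to route $R_{1,j}, R_{2,j}$ disjointly from one another while absorbing the corridors in $\mathcal{X}_{i_j}$, requires a careful quantitative bookkeeping of wall-structural constants in the choice of $w_{\ref*{lem:addlinkage1}}(k,c)$ and $f_{\ref*{lem:addlinkage1}}(k)$.
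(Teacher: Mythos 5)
Your proposal breaks down at the pigeonhole step, and the gap is not merely a matter of ``careful quantitative bookkeeping'': you are implicitly assuming a localization of $\bigcup \mathcal{X}_i$ that Lemma~\ref{lem:breaking} does not provide. Lemma~\ref{lem:breaking} only bounds the \emph{number} of corridors in $\mathcal{X}_i$ (at most 12); it says nothing about their row or column coordinates being close together. In particular, the two endpoints $a$ and $b$ of the subpath $U$ can be at opposite corners of $W$, in which case $\mathcal{X}$ consists of one cluster of corridors around $a$ and another around $b$ that are arbitrarily far apart. Your claim that ``each $\bigcup \mathcal{X}_i$ is confined to a bounded number of consecutive rows and columns'' is therefore false, and there is no pigeonhole over ``row-locations'' that can place an entire $\bigcup \mathcal{X}_{i_j}$ inside a single row-band $B_j$. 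Once $a_j$ and $b_j$ fail to lie in the same band, the routing of $R_{1,j}$ and $R_{2,j}$ inside $B_j$ collapses, and so does the final disjointness claim.

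The paper's proof confronts exactly this difficulty. It does \emph{not} attempt to confine each subpath's footprint to a single region. Instead, for each $i$ it picks a collection $\mathcal{C}_i$ of 3-column-slices \emph{and} a collection $\mathcal{R}_i$ of 3-row-slices so that $H_i := \bigcup(\mathcal{C}_i \cup \mathcal{R}_i)$ both contains $\bigcup \mathcal{X}_i$ and is itself a subdivision of a $3$-connected graph, which is what Lemma~\ref{lem:breaking}\ref{item:b2} actually requires to produce a $\gamma$-non-zero $W'$-handle from $U_i$. Using both rows and columns is unavoidable because $a$ and $b$ may differ in both coordinates. The price is that the sets $H_i$ (and hence the handles $X_i$) cannot be made pairwise disjoint: the 3-row-slices of different $i$'s must cross the same columns. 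The paper's remedy is the decisive step you are missing: it arranges that every vertex of $W$ lies in at most two of the $H_i$'s and at most one $U_i$, so each vertex is in at most three of the handles $X_i$. It then takes $h(k) = 3 f_{\ref*{thm:tpath}}(k)+1$ such handles and invokes Theorem~\ref{thm:tpath} to extract $k$ that are fully vertex-disjoint. This pass from a $1/3$-integral family to a vertex-disjoint one via Wollan's $A$-paths theorem is essential and is entirely absent from your plan. Without it, the lemma does not follow from a purely geometric routing argument of the kind you sketch.

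A secondary issue: your selection of $W'$ and $W_L$ up front is also at odds with the iterative structure needed here. The paper builds the slices $\mathcal{C}_i, \mathcal{R}_i$ one $i$ at a time precisely so that later $U_j$'s can be chosen with endpoints in the part of the wall not yet ``used up'' (property~\ref{item:disjointcolumns}--\ref{item:pathintersections-col} in the paper's claim), and only \emph{after} all $h(k)$ iterations does it find the $c$-column-slice $W'$ disjoint from all the $U_i$'s. Fixing $W'$ first forces you to hope that enough $\mathcal{X}_i$'s happen to avoid $W'$, which again requires a localization that is not available.
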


\begin{proof}
    Let~${h(k) := 3 f_{\ref*{thm:tpath}}(k) + 1}$
    and~${f_{\ref*{lem:addlinkage1}}(k) := 2 \cdot 240^2 h(k)^2 }$. 
    Let~$\mathcal{P}$ be a set of~${f_{\ref*{lem:addlinkage1}}(k)}$ vertex-disjoint $\gamma$-non-zero $\branch(W)$-paths. 
    Let~${w_{\ref*{lem:addlinkage1}}(k,c) := (48 h(k) + 1)(c - 1) + 144h(k) + 1 }$.

    \begin{claim*}
        For all~${i \in [h(k)]}$ there exist a set~$\mathcal{C}_i$ of $3$-column-slices of~$W$, a set~$\mathcal{R}_i$ of $3$-row-slices of~$W$ and a subpath~$U_i$ of a path in~$\mathcal{P}$ such that, with ${H_i := \bigcup(\mathcal{C}_i \cup \mathcal{R}_i)}$, we have 
        \begin{enumerate}
            [label=(\alph*)]
            \item\label{item:slicenumbers} ${1 \leq \abs{\mathcal{C}_i} \leq 48}$ and~${1 \leq \abs{\mathcal{R}_i} \leq 48}$,
            \item\label{item:disjointcolumns} every~${C \in \mathcal{C}_i}$ is vertex-disjoint from every~${C' \in \mathcal{C}_j}$ for all~${j \in [i-1]}$,
            \item\label{item:disjointrows} every~${R \in \mathcal{R}_i}$ is vertex-disjoint from every~${R' \in \mathcal{R}_j}$ for all~${j \in [i-1]}$, 
            \item\label{item:vertexdisjU} $U_i$ and~$U_j$ are vertex-disjoint for all~${j \in [i-1]}$, 
            \item\label{item:pathintersections-col} every $3$-column-slice of~$W$ that intersects~$U_i$ also intersects some $3$-column-slice in~${\bigcup_{j \in[i]} \mathcal{C}_{j}}$, 
            \item\label{item:gammabreakingTpath} for any column-slice~$W'$ of~$W$ which is disjoint from~$U_i$, there is a $\gamma$-non-zero $W'$-handle in~${H_i \cup U_i}$.
        \end{enumerate}
    \end{claim*}
    
    \begin{proofofclaim}[Proof of Claim]
        We proceed by induction on~$i$. 
        For~${i \in [h(k)]}$, assume that the claim holds for all~${j \in [i-1]}$. 
        We define
        \begin{itemize}
            \item $\widetilde{\mathcal{C}}_i$ to be the set of all $3$-column-slices of~$W$ which intersect no $3$-column-slices in~${\bigcup_{j \in [i-1]} \mathcal{C}_{j}}$,
            \item $\widetilde{\mathcal{R}}_i$ to be the set of all $3$-row-slices of~$W$ which intersect no $3$-row-slices in~${\bigcup_{j \in [i-1]} \mathcal{R}_{j}}$, and
            \item ${\widetilde{H}_i := \bigcup \left( \widetilde{\mathcal{C}}_i \cup \widetilde{\mathcal{R}}_i \right)}$.
        \end{itemize}

        We will first show that the number of vertices in~${\branch(W) \setminus V(\widetilde{H}_i)}$ is small. 
        Let~$I$ be the set of all column indices~$a$ such that 
        the $a$-th column~$C^W_a$ intersects no $3$-column-slice in~${\bigcup_{j \in [i-1]} \mathcal{C}_j}$. 
        Then~$I$ admits a partition into intervals consisting of consecutive integers such that the number of intervals is bounded by~${\sum_{j \in [i-1]} \abs{\mathcal{C}_j} + 1 \leq 48(i-1)+1}$. 
        Observe that  at least one interval of~$I$ has size at least~$3$ because~${w_{\ref*{lem:addlinkage1}}(k,c) > 3 \cdot 48 (i-1) + 2 \cdot (48 (i-1) + 1)}$.
        This implies that~$\widetilde{C}_i$ is nonempty. 
        Suppose that a vertex~$v$ in~${\branch(W)}$ is not in~${\widetilde{H}_i}$. 
        Let us say that~${v \in V(C^W_x) \cap V(R^W_y)}$ for some~$x$ and~$y$. 
        Since~$v$ is not in~$\widetilde{H}_i$, either~$C^W_x$ intersects some $3$-column-slice in~$\mathcal{C}_j$ for some~${j < i}$ 
        or~$x$ belongs to an interval of~$I$ of size at most~$2$. 
        Since at least one interval of~$I$ has size more than~$2$, 
        the number of possible values of~$x$ is at most 
        \[
            3 \cdot \sum_{j \in [i-1]} \abs{\mathcal{C}_j} + 
            2 \cdot \sum_{j \in [i-1]} \abs{\mathcal{C}_j}
            \leq 240(i-1).
        \]
        
        By the same argument, 
        we deduce that~$\widetilde{R}_i$ is nonempty
        and the number of possible values of~$y$ is at most~${240(i-1)}$. 
        Thus, the number of vertices in~${\branch(W)}$ not in~$\widetilde{H}_i$ is at most~${2(240(i-1))^2}$, 
        because there are at most two vertices of~${\branch(W)}$ in~${V(C^W_x) \cap V(R^W_y)}$ for each~$x$ and~$y$.
    
        Since~${\abs{\mathcal{P}} \geq 2(240i)^2 > 2(240(i-1))^2+(i-1)}$, 
        there is a path~$P_i$ in~$\mathcal{P}$ both of whose endvertices are in~$\widetilde{H}_i$ such that~${U_j}$ is not a subpath of~${P_i}$ for all~${j < i}$.
        Let~$\mathcal{X}_i$ be the set of at most~$12$ corridors of~$\widetilde{H}_i$ and let~$U_i$ be a subpath of~${P_i}$ guaranteed by Lemma~\ref{lem:breaking}. 

        Let~$\mathcal{C}_i$ be a minimal non-empty subset of~$\widetilde{C}_i$ containing all $3$-column-slices in~$\widetilde{\mathcal{C}}_i$
        which intersect some corridor in~$\mathcal{X}_i$. 
        Since each corridor of~$\widetilde{H}_i$ intersects at most four $3$-column-slices,~${\abs{\mathcal{C}_i} \leq 4 \cdot 12}$. 
        Similarly, let~$\mathcal{R}_i$ be a minimal non-empty subset of $\widetilde{R}_i$ containing all $3$-row-slices in~$\widetilde{\mathcal{R}}_i$ which intersect some corridor in~$\mathcal{X}_i$. 
        Then~${\abs{\mathcal{R}_i} \leq 48}$. 
        Now~\ref{item:slicenumbers}--\ref{item:pathintersections-col} are true by construction. 
        
        To see~\ref{item:gammabreakingTpath},
        let~$W'$ be a column-slice disjoint from~$U_i$. 
        Note that~${H_i \cup W'}$ is a subdivision of a $3$-connected graph. 
        Applying Lemma~\ref{lem:breaking}\ref{item:b2} 
        with~${H' := H_i \cup W'}$ and~${T := \branch(H') \cap V(W')}$, 
        there is a $\gamma$-non-zero $T$-path~$P$ in~${H' \cup U_i}$. 
        Then~$P$ must use at least one edge of~$U_i$ because ${\gamma(P') = 0}$ for every $T$-path~$P'$ in~$H'$. 
        This implies that ${P \subseteq H_i \cup U_i}$, because~$T$ separates~$U_i$ from~$W'$ in~$H'$. 
        It follows that~$P$ is a $W'$-handle.
    \end{proofofclaim}

    Let~$I$ be the set of all column indices~$a$ such that 
    $C^W_a$ intersects no $3$-column-slice in~${\bigcup_{j \in [h(k)]} \mathcal{C}_j}$. 
    By~\ref{item:slicenumbers},~$I$ admits a partition into at most~${48h(k)+1}$ disjoint intervals, each consisting of consecutive integers. 
    Since~${w_{\ref*{lem:addlinkage1}}(k,c) - 3 \cdot 48 h(k) > (48 h(k) + 1)(c-1)}$, there exist~$c$ consecutive columns of~$W$ that do not intersect any $3$-column-slice in~${\bigcup_{i \in [h(k)]} \mathcal{C}_i}$. 
    Thus they form a $c$-column-slice~$W'$ of~$W$ which is disjoint from~${\bigcup_{i \in [h(k)]} U_i}$ by~\ref{item:pathintersections-col}.
	By~\ref{item:gammabreakingTpath}, for each~${i \in[h(k)]}$, there is a $\gamma$-non-zero $W'$-handle~$X_i$ in~${H_i \cup U_i}$.
	
    By construction, each vertex of~${W - V(W')}$ is contained in at most two graphs in~${\{ H_i \colon i \in [h(k)] \}}$ 
    and in at most one path in~${\{ U_i \colon i \in [h(k)] \}}$. 
    Hence, every vertex of~$W$ is contained in at most three paths in~${\{ X_i \colon i \in [h(k)] \}}$.
    Since~${h(k) = 3 f_{\ref*{thm:tpath}}(k) + 1}$, any vertex set of size at most~${f_{\ref*{thm:tpath}}(k)}$ cannot hit all these paths. 
    By Theorem~\ref{thm:tpath}, there exist~$k$ vertex-disjoint $\gamma$-non-zero $W'$-handles, as desired. 
\end{proof}

Now we obtain Lemma~\ref{lem:addlinkage} as a corollary of Lemmas~\ref{lem:separating} and~\ref{lem:addlinkage1}. 

\begin{proof}[Proof of Lemma~\ref{lem:addlinkage}]
    Let~$w_{\ref*{lem:addlinkage}}$ and~$f_{\ref*{lem:addlinkage}}$ be the functions~$w_{\ref*{lem:addlinkage1}}$ and~$f_{\ref*{lem:addlinkage1}}$ respectively, as given by Lemma~\ref{lem:addlinkage1}. 
    By Lemma~\ref{lem:addlinkage1}, 
    there exists a $c$-column-slice~$W'$ of~$W$ and a set~$\mathcal{P}'$ of~$k$ vertex-disjoint $\gamma$-non-zero $W'$-handles. 
    For each~${i \in [t-1]}$ and each~${P \in \mathcal{P}_i}$, let~${R_{P,1}}$ and~${R_{P,2}}$ be the rows of~$W$ containing the endvertices of~$P$, 
    and let~$Q_P$ be the row-extension of~$P$ to $W'$, that is the unique $V(W')$-path such that~${P \subseteq Q_P \subseteq P \cup R_{P,1} \cup R_{P,2}}$. 
    We remark that it is possible that~${R_{P,1} = R_{P,2}}$.
    
    Now applying Lemma~\ref{lem:separating} to the sets~${\{ Q_P \colon P \in \mathcal{P}_i \}}$ for~${i \in [t-1]}$ together with~$\mathcal{P}'$ yields the desired result.
\end{proof}

\section{Basic lemmas for products of abelian groups}
\label{sec:abelian}

In this section, we prove some basic lemmas on products of abelian groups that will be useful throughout Sections~\ref{sec:handles2cycles} and~\ref{sec:main}.

\medskip

An \emph{arithmetic progression} is a set of integers~$A$ such that there are integers~$a$ and~${b \neq 0}$ for which~${A = \{ a + bn \colon n \in \mathbb{Z} \}}$. 
For a set~${\mathcal{A} = \{A_i \colon i \in [k]\}}$ of arithmetic progressions, we say~$\mathcal{A}$ \emph{covers} a set~$S$ if~${S \subseteq \bigcup_{i \in[k]} A_i}$.
We will use the following fact about arithmetic progressions, conjectured by Erd\H{o}s in 1962 and proven by Crittenden and Vanden Eynden~\cite{CrittendenVE1970} in 1969.
We cite an equivalent version of Balister et~al.~\cite{BalisterBMST2020}, who presented a simple proof. 

\begin{theorem}[\cite{CrittendenVE1970,BalisterBMST2020}]
    \label{thm:arithmetic}
    Let~${\mathcal{A} = \{ A_i \colon i \in [k] \}}$ be a set of~$k$ arithmetic progressions. 
    If~$\mathcal{A}$ covers a set of~$2^k$ consecutive integers, then~$\mathcal{A}$ covers~$\mathbb{Z}$. 
\end{theorem}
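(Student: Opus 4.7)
The plan is to prove the theorem by induction on $k$. For the base case $k = 1$, a single arithmetic progression $\{a + bn : n \in \mathbb{Z}\}$ with $b \neq 0$ that contains two consecutive integers must have $\abs{b} = 1$, and hence equals $\mathbb{Z}$.

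For the inductive step, I would assume the conclusion for families of $k-1$ arithmetic progressions and let $\mathcal{A} = \{A_1, \ldots, A_k\}$ cover $2^k$ consecutive integers. After translation this interval may be taken to be $I := [1, 2^k]$. The strategy is to identify an index $j \in [k]$ together with a sub-interval $I' \subseteq I$ of $2^{k-1}$ consecutive integers such that $I' \cap A_j = \emptyset$. Granted such $(j, I')$, the $k - 1$ progressions in $\mathcal{A} \setminus \{A_j\}$ cover $I'$, and applying the induction hypothesis then yields that $\mathcal{A} \setminus \{A_j\}$ covers $\mathbb{Z}$, and hence so does $\mathcal{A}$.

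To produce $(j, I')$, I would take $A_j \in \mathcal{A}$ with largest common difference $d$. The easy case is $d > 2^{k-1}$: then $A_j$ meets $I$ in at most two points, and the gaps these points create within $I$ contain a sub-interval of $2^{k-1}$ consecutive integers. More precisely, if $A_j$ meets $I$ in two points $p_1 < p_2$ then their middle gap has length $d - 1 \geq 2^{k-1}$, while if $A_j$ meets $I$ in at most one point then the larger of the two outer pieces has length at least $\lceil (2^k - 1)/2 \rceil = 2^{k-1}$.

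The main obstacle will be the complementary case, in which every progression in $\mathcal{A}$ has period at most $2^{k-1}$. Then every sub-interval of $I$ of length $2^{k-1}$ meets every $A_i$, so the naive ``remove one and shrink'' reduction cannot work directly. In this regime my plan would be to attempt a rescaling or shifting argument: select a prime $p$ dividing the $\lcm$ of the periods, partition the progressions in $\mathcal{A}$ according to their residues modulo $p$, and analyse each residue class separately as a covering problem with reduced $\lcm$ after dividing out $p$, so as to invoke a smaller instance of the induction and reassemble the local conclusions into a global covering. Making this reduction carefully enough to guarantee that the rescaled covered interval remains long enough to apply the inductive hypothesis is the technical heart of the argument, and is the step I would expect to be by far the most delicate; the detailed execution is carried out in the cited works of Crittenden--Vanden Eynden and Balister et al.
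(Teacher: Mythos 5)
The paper does not prove this theorem itself; it cites Crittenden--Vanden Eynden and Balister et al.\ and relies on the result as a black box, so there is no proof of record for you to match. Evaluating your attempt on its own terms: the base case $k=1$ is correct, and your reduction when some $A_j$ has common difference $d > 2^{k-1}$ is sound. Such an $A_j$ meets the interval $I$ of length $2^k$ in at most two points; if it meets $I$ in two points the run strictly between them has $d - 1 \geq 2^{k-1}$ integers, and if it meets $I$ in at most one point one of the two remaining pieces has at least $\lceil (2^k - 1)/2 \rceil = 2^{k-1}$ integers. Either way you obtain an interval $I'$ of $2^{k-1}$ consecutive integers avoided by $A_j$, and the induction hypothesis applied to $\mathcal{A} \setminus \{A_j\}$ finishes that case.

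The genuine gap is the remaining case, where every common difference is at most $2^{k-1}$, and you do not resolve it. You correctly note that in this regime every window of length $2^{k-1}$ meets every $A_i$, so the remove-and-shrink reduction stalls. But your proposed remedy --- choose a prime $p$ dividing the lcm of the periods, partition the progressions by residue mod $p$, rescale, and recurse --- is only a plan, and you say yourself that you would defer its execution to the cited works. As written it does not address, for instance, what to do with a progression whose period is coprime to $p$ (which meets every residue class and cannot be assigned to one part of the partition), nor how the bound $2^{k-1}$ survives the rescaling so that the inductive hypothesis remains applicable. That bookkeeping is precisely the content of the theorem; the case you handled cleanly is the easy half. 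So while the outline is a reasonable first step and your self-assessment is honest, the statement has not been proved.
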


\begin{corollary}
    \label{cor:omega-avoiding}
    Let~$m$, $t$, and~$\omega$ be positive integers, let~${\Gamma = \prod_{j \in [m]} \Gamma_j}$ be a product of~$m$ abelian groups, and for all~${j \in [m]}$ let~$\Omega_j$ be a subset of~$\Gamma_j$ of size at most~$\omega$. 
    For all~${i \in [t]}$ and~${j \in [m]}$, let~${g_{i,j}}$ be an element of~$\Gamma_j$.
    If for each~${i \in [t]}$ there exists an integer~$c_i$ 
    such that~${\sum_{i=1}^t c_i g_{i,j} \notin \Omega_j}$ 
    for all~${j \in [m]}$, 
    then for each~${i \in [t]}$ there exists an integer~${d_i \in [2^{m\omega}]}$ 
    such that~${\sum_{i=1}^t d_i g_{i,j} \notin \Omega_j}$ for all~${j \in [m]}$. 
\end{corollary}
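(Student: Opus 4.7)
The plan is to sequentially modify the coordinates of the starting vector $c = (c_1, \ldots, c_t)$ to produce the desired vector $d \in [2^{m\omega}]^t$, maintaining throughout the invariant that the current vector is good (i.e., $\sum_i d_i g_{i,j} \notin \Omega_j$ for all $j$). At step $i$, I fix the coordinates $d_{i'}$ for $i' \neq i$ and reduce to a one-variable problem to find a replacement for $d_i$ lying in $[2^{m\omega}]$ via Theorem~\ref{thm:arithmetic}.

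At step $i$, the set of integers $n$ that would make the vector bad if $d_i$ were set to $n$ is the union, over all pairs $(j, \omega)$ with $j \in [m]$ and $\omega \in \Omega_j$, of the solution sets of the linear equations $n \cdot g_{i,j} = \omega - \sum_{i' \neq i} d_{i'} g_{i',j}$ in $\Gamma_j$. Each such solution set is either empty, a single point (when $g_{i,j}$ has infinite order), an arithmetic progression (when $g_{i,j}$ has finite order at least~$2$), or all of $\mathbb{Z}$. The last case is ruled out by the invariant: if $g_{i,j} = 0$, then the equation becomes $0 = \omega - \sum_{i' \neq i} d_{i'} g_{i',j}$, but $\sum_{i' \neq i} d_{i'} g_{i',j} = \sum_{i'} d_{i'} g_{i',j} \neq \omega$ by goodness of the current vector.

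Let $k_1$ be the number of these solution sets that are arithmetic progressions and $k_2$ the number that are single points, so $k_1 + k_2 \leq m\omega$. The union of the $k_1$ arithmetic progressions is not all of $\mathbb{Z}$ (the current value of $d_i$ belongs to none of them), so by Theorem~\ref{thm:arithmetic} every window of $2^{k_1}$ consecutive integers contains an integer missed by all the arithmetic progressions. Partitioning $[2^{m\omega}]$ into $2^{m\omega - k_1}$ consecutive blocks of length $2^{k_1}$, at least $2^{m\omega - k_1}$ integers in $[2^{m\omega}]$ avoid all the arithmetic progressions; after discarding the at most $k_2 \leq m\omega - k_1$ single points, at least $2^{m\omega - k_1} - (m\omega - k_1) \geq 1$ good integers remain in $[2^{m\omega}]$ (using $2^n \geq n + 1$ for all $n \geq 0$). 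I choose one such integer as the new value of $d_i$ and iterate; after $t$ steps the vector lies in $[2^{m\omega}]^t$ and is good.

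The main obstacle is that the bad set for $d_i$ is not in general a union of arithmetic progressions in the strict sense of Theorem~\ref{thm:arithmetic}: when $g_{i,j}$ has infinite order, the corresponding bad set is a single point, which is not an arithmetic progression with non-zero common difference. The fix is to separate these isolated points from the genuine arithmetic progressions, apply Theorem~\ref{thm:arithmetic} only to the latter, and absorb the former via the elementary counting estimate above.
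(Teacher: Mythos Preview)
Your proof is correct and follows the same coordinate-by-coordinate replacement strategy as the paper. The only divergence is in how you handle the single-point bad sets arising when $g_{i,j}$ has infinite order. The paper's trick is to absorb each such point $a$ into a genuine arithmetic progression $\{a + 2(d_i - a)k : k \in \mathbb{Z}\}$ that still avoids the current good value $d_i$; this pads the collection up to at most $m\omega$ arithmetic progressions and lets Theorem~\ref{thm:arithmetic} do all the work in one stroke. Your route---apply the theorem only to the $k_1$ honest progressions, then subtract the $k_2$ isolated points via the block count $2^{m\omega-k_1} - k_2 \geq 1$---is slightly longer but entirely elementary and avoids the artificial embedding. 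Either way the iteration goes through.
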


\begin{proof}
    Pick an integer~$d_i$ for each~${i \in [t]}$ such that~${\sum_{i=1}^t d_i g_{i,j} \notin \Omega_j}$ for all~${j \in [m]}$, and subject to this ${\abs{\{i \in [t] \colon d_i \in [2^{m\omega}]\}}}$ is maximised. 
    Suppose for a contradiction that for some~${x \in [t]}$, we have that ${d_x \notin [2^{m\omega}]}$. 
    Without loss of generality, we may assume~${x = t}$.
    
    For all~${j \in [m]}$ and~${g \in \Omega_j}$, let~$A_{j,g}$ be the set of integers~$d$ such that~${d g_{t,j} + \sum_{i=1}^{t-1} d_i g_{i,j} = g}$. 
    Note that~$A_{j,g}$ is an arithmetic progression or contains at most one integer. 
    Let $A_{j,g}'$ be an arithmetic progression such that~${A_{j,g} \subseteq A_{j,g}'}$ and~${d_t \notin A_{j,g}'}$. 
    Such an~$A_{j,g}'$ exists; 
    if~$A_{j,g}$ is an arithmetic progression then let~${A'_{j,g} := A_{j,g}}$, and if~$A_{j,g}$ contains a unique integer~$a_j$, 
    then let~${A'_{j,g}}$ be the arithmetic progression~${\{ a_j + 2 (d_t - a_j) k \colon k \in \mathbb{Z}\}}$.

    Now~${\mathcal{A} := \{ A'_{j,g} \colon j \in [m], g \in \Omega_j \}}$ is a set of~${m\omega}$ arithmetic progressions not covering~$d_t$. 
    By Theorem~\ref{thm:arithmetic}, there exists~${d'_t \in [2^{m\omega}]}$ such that~${d'_t g_{t,j} + \sum_{i=1}^{t-1} d_i g_{i,j} \notin \Omega_j}$ for all~${j \in [m]}$, contradicting our choice of~$d_t$.
\end{proof}

\medskip

For a sequence~${\mathfrak{a} = ( a_i \colon i \in [t] )}$ over an abelian group~$\Gamma$, 
we let~${\Sigma(\mathfrak{a})}$ denote the set of all sums of subsequences of~$\mathfrak{a}$. 
We write~${\abs{\mathfrak{a}} := t}$, the length of~$\mathfrak{a}$. 
We say~${a \in \Gamma}$ is \emph{repeated} in~$\mathfrak{a}$ if~${a = a_i = a_j}$ for some~${1 \leq i < j \leq t}$. 
We say that~$\mathfrak{a}$ is \emph{good} if 
$\abs{\Sigma(\mathfrak{a})} \geq \abs{\mathfrak{a}}$. 
Obviously, a sequence of pairwise distinct elements of~$\Gamma$ is good. 
Also, observe that for an element~$g$ of order at least~${t}$, if~${a_i := g}$ for all~${i \in [t]}$, then the sequence~${\mathfrak{a} = (a_i \colon i \in [t])}$ is good, 
because ${\abs{\Sigma(\mathfrak{a})} \geq \abs{ \{kg \colon k \in [t]\}} \geq t}$.

\begin{lemma}\label{lem:smallgoodset}
    Let~$\Gamma$ be an abelian group and let~${\mathfrak{a} = (a_i \colon i \in [t])}$  be a sequence of length~$t$ over~$\Gamma$.
    If all repeated elements in~$\mathfrak{a}$ have order at least~$t$, then~$\mathfrak{a}$ is good. 
\end{lemma}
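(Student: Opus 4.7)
The plan is to proceed by induction on $t$. The base case $t=1$ is immediate since $\Sigma(\mathfrak{a}) \supseteq \{0\}$, so $|\Sigma(\mathfrak{a})| \geq 1$. For the inductive step, I distinguish two easy subcases before doing the real work: if all elements of $\mathfrak{a}$ are pairwise distinct, then $\{0\} \cup \{a_i \colon i \in [t]\} \subseteq \Sigma(\mathfrak{a})$ already gives $|\Sigma(\mathfrak{a})| \geq t+1$, so $\mathfrak{a}$ is good.

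Otherwise, fix a repeated element $b$ of $\mathfrak{a}$; by hypothesis, $b$ has order at least $t$. Let $\mathfrak{a}'$ be the subsequence of length $t-1$ obtained by deleting one occurrence of $b$. Any element repeated in $\mathfrak{a}'$ is repeated in $\mathfrak{a}$, so has order at least $t \geq t-1 = |\mathfrak{a}'|$; hence the inductive hypothesis applies and yields $|\Sigma(\mathfrak{a}')| \geq t-1$.

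The key identity is
\[
    \Sigma(\mathfrak{a}) = \Sigma(\mathfrak{a}') \cup \bigl( b + \Sigma(\mathfrak{a}') \bigr),
\]
since every subsequence of $\mathfrak{a}$ either omits the reinstated copy of $b$ or includes it. I then split on whether $b + \Sigma(\mathfrak{a}') = \Sigma(\mathfrak{a}')$. If this set equality \emph{fails}, there is an element of $b + \Sigma(\mathfrak{a}')$ not in $\Sigma(\mathfrak{a}')$, so the union is strictly larger than $\Sigma(\mathfrak{a}')$, giving $|\Sigma(\mathfrak{a})| \geq (t-1)+1 = t$. If the equality \emph{holds}, then $\Sigma(\mathfrak{a}')$ is invariant under translation by $b$, hence also by every element of $\langle b \rangle$; since $0 \in \Sigma(\mathfrak{a}')$, this forces $\langle b \rangle \subseteq \Sigma(\mathfrak{a}')$, and then
\[
    |\Sigma(\mathfrak{a})| \geq |\Sigma(\mathfrak{a}')| \geq |\langle b \rangle| \geq t,
\]
using that the order of $b$ is at least $t$.

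I do not anticipate a serious obstacle here: the only delicate point is verifying that the inductive hypothesis remains applicable to $\mathfrak{a}'$ (which holds because every repeated element of $\mathfrak{a}'$ already had order $\geq t$ in $\mathfrak{a}$), and the translation-invariance dichotomy is standard. The argument is self-contained and does not require any of the wall or group-labelling machinery developed earlier.
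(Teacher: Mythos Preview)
Your proof is correct and follows essentially the same inductive approach as the paper: delete one copy of a repeated element~$b$, apply induction to get $|\Sigma(\mathfrak{a}')| \geq t-1$, and compare $S := \Sigma(\mathfrak{a}')$ with its translate $b+S$. The only differences are cosmetic: your claim $|\Sigma(\mathfrak{a})| \geq t+1$ in the all-distinct case should read $\geq t$ (some $a_i$ could equal~$0$, though this does not affect goodness), and in the case $S = b+S$ you argue via $\langle b \rangle \subseteq S$, whereas the paper sums over~$S$ to obtain $(t-1)b = 0$ and a contradiction --- both routes give the same conclusion.
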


\begin{proof}
    We proceed by induction on~$t$. 
    We may assume that~${t \geq 2}$.
    If~$\mathfrak{a}$ has no repeated elements, then~${\{ a_i \colon i \in [t]\} \subseteq \Sigma(\mathfrak{a})}$ and therefore~$\mathfrak{a}$ is good. 
    Thus, 
    without loss of generality, we may assume that~$a_t$ is a repeated element. 
    Let~${\mathfrak{a}' := (a_i \colon i \in [t-1])}$
    and~${S := \Sigma(\mathfrak{a}')}$.   
    By induction~${\abs{S} \geq t-1}$.
    We may assume that~${\abs{S} = t-1}$.
    Let~${T := \{ x + a_t \colon x \in S \} \subseteq \Sigma(\mathfrak{a})}$. 
    If~${S = T}$, then~${\sum_{x \in S} x = \sum_{x \in S} (x+a_t)}$ 
    and therefore~${\abs{S}a_t = 0}$, contradicting the assumption on the order of~$a_t$.
    Thus~${S \neq T}$ and therefore~${\abs{\Sigma(\mathfrak{a})} \geq \abs{S \cup T} \geq t}$. 
\end{proof}

\medskip

The following lemma and its corollary are useful to find a 
cycle whose $\gamma_i$-value is not in~$\Omega_i$ for all~${i \in [m]}$. 

\begin{lemma}
    \label{lem:vectorsum}
    Let~$m$,~$t$, and~$\omega$ be positive integers, let~${\Gamma = \prod_{j \in [m]} \Gamma_j}$ be a product of~$m$ abelian groups and for all~${j \in [m]}$ let~$\Omega_j$ be a subset of~$\Gamma_j$ of size at most~$\omega$.
    For all~${i \in [t]}$ let~$S_i$ be a subset of~$\Gamma$ 
    such that for each~${j \in [m]}$ there exists some~${i \in [t]}$ such that~${\pi_j(g) \neq \pi_j(g')}$ 
    for all distinct~${g, g'}$ in~$S_i$.
    If~${\abs{S_i} > m\omega }$ for all~${i \in [t]}$,
    then for every~${h \in \Gamma}$ there is a sequence~${(g_i \colon i \in [t])}$ of elements of~$\Gamma$ such that 
    \begin{enumerate}
        [label=(\roman*)]
        \item ${g_i \in S_i}$ for each ${i \in [t]}$, and 
        \item ${\pi_j\left(h + \sum_{i \in [t]} g_i \right) \notin \Omega_j}$ for all~${j \in [m]}$.
    \end{enumerate}
\end{lemma}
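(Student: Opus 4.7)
The plan is a simple averaging argument, exploiting that each coordinate $j$ has a distinguished index where the corresponding projection is injective.

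Using the hypothesis, I would first fix a function $\phi\colon [m]\to[t]$ so that $\pi_j$ is injective on $S_{\phi(j)}$ for every $j\in[m]$. For each $i\in[t]\setminus\phi([m])$ I would pick $g_i\in S_i$ arbitrarily; these indices are never the distinguishing index for any coordinate, so their contribution amounts to a fixed translate of~$h$ and does not affect the analysis. Let $I:=\phi([m])$ and set $h':=h+\sum_{i\notin I}g_i$; it remains to produce $(g_i)_{i\in I}\in\prod_{i\in I}S_i$ such that $\pi_j\bigl(h'+\sum_{i\in I}g_i\bigr)\notin\Omega_j$ for all $j\in[m]$.

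For this step I would choose the $g_i$ for $i\in I$ independently and uniformly at random. Fix $j\in[m]$, and condition on the choices at every index $i\in I$ with $i\neq\phi(j)$. Then $\pi_j(g_{\phi(j)})$ is uniformly distributed on $\pi_j(S_{\phi(j)})$, which has exactly $|S_{\phi(j)}|$ elements by injectivity, so the conditional probability that $\pi_j(h'+\sum_{i\in I}g_i)\in\Omega_j$ is at most
\[
    \frac{|\Omega_j|}{|S_{\phi(j)}|}\le \frac{\omega}{m\omega+1}<\frac{1}{m}.
\]
A union bound over $j\in[m]$ then gives total failure probability strictly less than~$1$, so some choice works. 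Equivalently, this is a direct counting estimate: at most $\sum_{j\in[m]}\omega\prod_{i\in I\setminus\{\phi(j)\}}|S_i|$ of the $\prod_{i\in I}|S_i|$ candidate tuples violate some constraint, and dividing by $\prod_{i\in I}|S_i|$ reduces to the same bound $\sum_j\omega/|S_{\phi(j)}|<1$.

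I do not anticipate any genuine obstacle. The only point requiring a moment of care is that several coordinates may share the same designated index, so $\phi$ need not be injective; however, the union bound is indexed by~$j$, not by~$i$, and the inequality $|S_{\phi(j)}|>m\omega$ holds regardless of how many $j$ collide at a given $\phi(j)$, so the estimate goes through unchanged.
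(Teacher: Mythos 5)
Your argument is correct and is essentially the paper's own proof: the paper also chooses each $g_i$ uniformly and independently at random, observes that for each $j$ there is some index $i$ on whose $S_i$ the projection $\pi_j$ is injective, deduces the conditional probability bound $\omega/(m\omega+1)$, and concludes via a union bound over $j \in [m]$. Your presentation is a bit more explicit (naming the map $\phi$ and restricting the randomization to $I = \phi([m])$), but the underlying idea and estimates are identical.
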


\begin{proof}
    Uniformly at random, select~${g_i \in S_i}$ independently for each~${i \in [t]}$, and consider the sum ${g := h + \sum_{i \in [t]} g_i}$. 
    For each~${j \in [m]}$, there exists~${i \in [t]}$ such that~$g$ and every group element obtained by replacing~$g_i$ in the sum with a different element of~$S_i$ have distinct $j$-th coordinates. 
    Hence, the probability that~${\pi_j(g) \in \Omega_j}$ is at most~${\omega/(m\omega+1)}$. 
    It follows that there is a positive probability that~${\pi_j(g) \notin \Omega_j}$ for all~${j \in [m]}$.
\end{proof}

For two sequences $\mathfrak{a} = (a_i \colon i \in[t])$, $\mathfrak{b} = (b_i \colon i \in [t])$ of length~$t$ over a product~${\Gamma = \prod_{j \in [m]} \Gamma_j}$ of~$m$ abelian groups,
we write~${\mathfrak{a} - \mathfrak{b}}$ to denote the sequence~${(a_i - b_i \colon i \in [t])}$
and for~${j \in [m]}$, we write~${\pi_j(\mathfrak{a})}$ to denote the sequence~${(\pi_j(a_i) \colon i \in [t])}$ over~$\Gamma_j$.

\begin{corollary}
    \label{cor:vectorsum}
    Let~$m$ and~$\omega$ be positive integers, 
    let~${\Gamma = \prod_{i \in [m]} \Gamma_i}$ be a product of~$m$ abelian groups, 
    and for each~${i \in [m]}$ let~$\Omega_i$ be a subset of~$\Gamma_i$ of size at most~$\omega$ and let~${\mathfrak{a}_i := (a_{i,j} \colon j \in [m\omega+1])}$ and ${\mathfrak{b}_i := (b_{i,j} \colon j \in [m\omega+1])}$ be two sequences over~${\Gamma}$
    such that~${ \pi_i(\mathfrak{a}_{i}-\mathfrak{b}_{i})}$ is good. 
    Then for all~${h \in \Gamma}$, ${i \in [m]}$, and~${j \in [m\omega+1]}$, 
    there exists~${c_{i,j} \in \{a_{i,j},b_{i,j}\}}$ 
    such that for all~${x \in [m]}$, we have~${\pi_x\left(h + \sum_{i \in [m]} \sum_{j \in [m\omega+1]} c_{i,j}\right)\notin \Omega_x}$. 
\end{corollary}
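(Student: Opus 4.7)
The plan is to reduce Corollary~\ref{cor:vectorsum} directly to Lemma~\ref{lem:vectorsum} by encoding the choices $c_{i,j} \in \{a_{i,j}, b_{i,j}\}$ as selections from $m$ suitable subsets of $\Gamma$. For each $i \in [m]$, starting from the base sum $b_i^* := \sum_{j \in [m\omega+1]} b_{i,j}$, every choice $(c_{i,j})_j$ yields a sum of the form $b_i^* + \sum_{j \in T} (a_{i,j} - b_{i,j})$ for some $T \subseteq [m\omega+1]$. I will use the goodness of $\pi_i(\mathfrak{a}_i - \mathfrak{b}_i)$ to produce a set $S_i \subseteq \Gamma$ of such sums whose $i$-th coordinates are all distinct and with $|S_i| = m\omega + 1$.

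Concretely, since $\pi_i(\mathfrak{a}_i - \mathfrak{b}_i)$ is good, the set $\Sigma(\pi_i(\mathfrak{a}_i - \mathfrak{b}_i))$ contains at least $m\omega + 1$ distinct elements $s_1, \dots, s_{m\omega+1}$. For each $k$, pick a subset $T_k \subseteq [m\omega+1]$ witnessing $s_k = \sum_{j \in T_k} \pi_i(a_{i,j} - b_{i,j})$, and set
\[
    g_{i,k} \;:=\; b_i^* + \sum_{j \in T_k} (a_{i,j} - b_{i,j}) \;\in\; \Gamma.
\]
Let $S_i := \{g_{i,k} : k \in [m\omega+1]\}$. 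By construction $\pi_i(g_{i,k}) = \pi_i(b_i^*) + s_k$, so the $i$-th coordinates of the elements of $S_i$ are pairwise distinct, and in particular $|S_i| = m\omega + 1 > m\omega$. Moreover, each element of $S_i$ is realised as $\sum_{j \in [m\omega+1]} c_{i,j}$ for some choice of $c_{i,j} \in \{a_{i,j}, b_{i,j}\}$.

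Now the sets $S_1, \dots, S_m$ satisfy the hypotheses of Lemma~\ref{lem:vectorsum} with $t := m$: for each coordinate $j \in [m]$, the set $S_j$ itself witnesses the distinctness condition on the $j$-th coordinate, and $|S_i| > m\omega$ for all $i$. Applying the lemma to the element $h \in \Gamma$ produces a choice $g_i \in S_i$ for each $i \in [m]$ with $\pi_x\bigl(h + \sum_{i \in [m]} g_i\bigr) \notin \Omega_x$ for all $x \in [m]$. Expanding each $g_i$ as $\sum_{j} c_{i,j}$ yields the claimed elements $c_{i,j}$.

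There is no real obstacle here; the only point requiring care is bookkeeping the identification between subsets $T \subseteq [m\omega+1]$ and choice functions $j \mapsto c_{i,j}$, and verifying that the goodness hypothesis gives exactly the $m\omega+1$ distinct $i$-th coordinates needed to meet the strict inequality $|S_i| > m\omega$ in Lemma~\ref{lem:vectorsum}.
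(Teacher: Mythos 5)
Your proof is correct and takes essentially the same approach as the paper's: both construct, for each $i$, a set $S_i$ of size $m\omega+1$ with distinct $i$-th coordinates coming from the goodness of $\pi_i(\mathfrak{a}_i - \mathfrak{b}_i)$, then invoke Lemma~\ref{lem:vectorsum}. The only (purely cosmetic) difference is that you absorb the base sums $b_i^* = \sum_j b_{i,j}$ into the sets $S_i$, whereas the paper instead keeps $S_i \subseteq \Sigma(\mathfrak{a}_i - \mathfrak{b}_i)$ and shifts $h$ to $h' := h + \sum_i \sum_j b_{i,j}$ before applying the lemma.
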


\begin{proof}
    By definition of a good sequence, for each~${i \in [m]}$, we have
    ${\abs{\Sigma(\pi_i(\mathfrak{a}_{i} - \mathfrak{b}_{i}))} \geq m\omega+1}$. 
    Let~$S_i$ be a subset of~${\Sigma(\mathfrak{a}_{i} - \mathfrak{b}_{i})}$ 
    such that~$\pi_i$ restricted to~$S_i$ is a bijection from~$S_i$ to~${\Sigma(\pi_i(\mathfrak{a}_{i} - \mathfrak{b}_{i}))}$. 
    Let~${h' := h+\sum_{i \in [m]} \sum_{j \in [m\omega+1]} b_{i,j}}$.
    We apply Lemma~\ref{lem:vectorsum} with~$h'$ to find a sequence~${(g_i \colon i \in [m])}$ such that~${g_i \in S_i}$ for each~${i \in [m]}$ 
    and~${\pi_j\left(h' + \sum_{i \in [m]} g_i\right)\notin \Omega_j}$ for all $j\in [m]$. 
    Now, for each~${i \in [m]}$, we have that ${g_i \in \Sigma(\mathfrak{a}_i - \mathfrak{b}_i)}$, and hence for each~${j \in [m\omega+1]}$ there exists~${c_{i,j} \in \{a_{i,j},b_{i,j}\}}$ such that~${g_i+\sum_{j \in [m\omega+1]} b_{i,j} = \sum_{j \in [m\omega+1]} c_{i,j}}$.
    This completes the proof.
\end{proof}

\medskip

Given positive integers~$n$ and~$k$, we write~${R(n;k)}$ for the minimum integer~$N$ such that in every $k$-colouring of the edges of~$K_N$ there is a monochromatic copy of~$K_n$. 
A classical result of Ramsey~\cite{Ramsey1929} shows that~${R(n;k)}$ exists. 

\begin{lemma}
    \label{lem:ramsey}
    There exists a function~$f_{\ref*{lem:ramsey}} \colon \mathbb{N}^2 \to \mathbb{N}$ satisfying the following. 
    Let~$m$,~$t$ and~$N$ be positive integers 
    with~${N \geq f_{\ref*{lem:ramsey}}(t,m)}$ 
    and let~${\Gamma = \prod_{i \in [m]} \Gamma_i}$ be a product of~$m$ abelian groups. 
    Then for every sequence~${(g_i \colon i \in [N])}$ over~$\Gamma$, 
    there exists a subset~$I$ of~$[N]$ with~${\abs{I} = t}$ such that for each~${i \in [m]}$, either
	\begin{itemize}
		\item ${\pi_i(g_{j}) = \pi_i(g_{k})}$ for all ${j,k \in I}$, or 
		\item ${\pi_i(g_{j}) \neq \pi_i(g_{k})}$ for all distinct~${j,k \in I}$.
	\end{itemize}
	Furthermore, if~$Z$ is a subset of~${[m]}$ such that for all distinct~$i$ and~$j$ in~${[N]}$ there exists ${x \in Z}$ such that~${\pi_x(g_i) \neq \pi_x(g_j)}$, then the second condition holds for some~${i \in Z}$. 
\end{lemma}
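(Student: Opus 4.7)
The plan is to reduce this to the classical Ramsey theorem by colouring pairs of indices according to which coordinates agree. Specifically, define $f_{\ref*{lem:ramsey}}(t,m) := R(t; 2^m)$, and for every pair $\{j,k\} \in \binom{[N]}{2}$ assign the colour $c(\{j,k\}) \in \{0,1\}^{[m]}$ with $c(\{j,k\})_i = 0$ if $\pi_i(g_j) = \pi_i(g_k)$ and $c(\{j,k\})_i = 1$ otherwise. Since this is a colouring of the edges of $K_N$ with at most $2^m$ colours and $N \geq R(t; 2^m)$, there is a monochromatic clique on some index set $I \subseteq [N]$ of size $t$, with common colour $c^* \in \{0,1\}^{[m]}$.

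I would then verify directly from the definition of $c^*$ that $I$ works. For each $i \in [m]$ with $c^*_i = 0$, all pairs in $I$ agree in coordinate $i$, so the $\pi_i(g_j)$ for $j \in I$ are all equal; for each $i \in [m]$ with $c^*_i = 1$, all pairs in $I$ disagree in coordinate $i$, so the $\pi_i(g_j)$ for $j \in I$ are pairwise distinct. This is exactly the dichotomy claimed.

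For the ``furthermore'' clause, I would pick any two distinct indices $j, k \in I$; by hypothesis there exists $x \in Z$ with $\pi_x(g_j) \neq \pi_x(g_k)$, and hence $c^*_x = 1$. By the argument of the previous paragraph, the second bullet then holds for this $x \in Z$, as required.

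There is no serious obstacle here: the entire argument is a one-step application of Ramsey's theorem, and the ``furthermore'' clause is an immediate consequence of monochromaticity. The only minor thing to be careful about is keeping the two quantifiers straight (``for each $i$, for all pairs'' versus ``for each pair, some $i$'') when deducing the $Z$-statement, but monochromaticity turns the latter into the former automatically.
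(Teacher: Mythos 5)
Your proof is correct and uses exactly the same argument as the paper: colour pairs of indices by the set of coordinates in which they disagree (equivalently, your $\{0,1\}^{[m]}$ indicator vector), apply Ramsey with $f_{\ref*{lem:ramsey}}(t,m) := R(t;2^m)$, and observe that monochromaticity gives the dichotomy and, under the $Z$-hypothesis, forces the common colour to intersect $Z$.
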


\begin{proof}
    Let~${f_{\ref*{lem:ramsey}}(t,m) := R(t; 2^m)}$. 
    We define a $2^m$-colouring of the edges of~$K_N$ by colouring each edge~${xy}$ of~$K_N$ by the set~${\{ i \in [m] \colon \pi_i(g_x) \neq \pi_i(g_y)\}}$. 
    The result follows from the definition of~${R(t;2^m)}$. 
    Note that if~$Z$ is subset of~${[m]}$ such that for all distinct~$i$ and~$j$ in~${[N]}$ there exists an~${x \in Z}$ such that~${\pi_x(g_i) \neq \pi_x(g_j)}$, then every set used in the colouring intersects~$Z$. 
\end{proof}

\section{From handles to cycles}
\label{sec:handles2cycles}

The focus of this section is proving the following key lemma, which will be the final ingredient needed in the proof of Theorem~\ref{thm:main} for constructing 
the cycles from the clean subwall from Section~\ref{sec:cleanwalls} and the sets of handles from Section~\ref{sec:handles}. 

\begin{lemma}
    \label{lem:omega-avoiding-cycle}
    There exist functions ${c_{\ref*{lem:omega-avoiding-cycle}}, r_{\ref*{lem:omega-avoiding-cycle}} \colon \mathbb{N}^4 \to \mathbb{N}}$ satisfying the following. 
    Let~$t$,~$\ell$,~$m$ and~$\omega$ be positive integers with~${\ell \geq 3}$, 
    let~${\Gamma = \prod_{i \in [m]} \Gamma_i}$ be a product of~$m$ abelian groups, for each~${i \in [m]}$ let~$\Omega_i$ be a subset of~$\Gamma_i$ of size at most~$\omega$, 
    and let~${(G,\gamma)}$ be a $\Gamma$-labelled graph. 
    Let~$Z$ be a subset of~${[m]}$, 
    let~$W$ be a ${(\gamma,Z,\ell)}$-clean $(c,r)$-wall 
    with~${c \geq c_{\ref*{lem:omega-avoiding-cycle}}(t, \ell, m, \omega)}$ 
    and~${r \geq r_{\ref*{lem:omega-avoiding-cycle}}(t, \ell, m, \omega)}$. 
    For every set~$\mathcal{P}$ of at most~$t$ vertex-disjoint $W$-handles such that~${\gamma_i\left( \bigcup \mathcal{P} \right) \notin \Omega_i}$ for all~${i \in Z}$,
    there is a cycle~$O$ in~${W \cup \bigcup \mathcal{P}}$ such that~${\gamma_i(O) \notin \Omega_i}$ for all~${i \in [m]}$. 
\end{lemma}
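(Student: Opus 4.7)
The plan is to build $O$ in two stages: first a base cycle $O_0\subseteq W\cup\bigcup\mathcal{P}$ that uses every handle in $\mathcal{P}$ and is automatically correct on the $Z$-coordinates of $\gamma$, and then a controlled modification of $O_0$ via disjoint detours through auxiliary cycles in $W$ to also correct the $\overline{Z}$-coordinates, where $\overline{Z}:=[m]\setminus Z$.

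For the first stage, every handle has both endvertices among the degree-$2$ nails of $W$ lying in its first or last column, so the $t$ handles can be threaded into a single cycle $O_0$ by concatenating them with short segments of the first column, the last column, and a bounded number of top rows of $W$, leaving a large sub-region $W^*\subseteq W$ entirely unused. Every cycle of $W$ and every wall-path in $W$ between two nails decomposes into corridors of $W$, so by condition~(1) of cleanness all such objects are $\gamma_i$-zero for $i\in Z$. In particular $\gamma_i(O_0)=\gamma_i(\bigcup\mathcal{P})\notin\Omega_i$ for every $i\in Z$, and any further modification of $O_0$ that only swaps wall-paths between nails (or splices in cycles of $W$) preserves these $Z$-coordinates automatically.

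For the second stage, set $m':=|\overline{Z}|$ and $N:=m'\omega+1$. Inside $W^*$, which inherits $(\gamma,Z,\ell)$-cleanness, choose $m'N$ pairwise vertex-disjoint subwalls $W_{i,j}$ for $i\in\overline{Z}$ and $j\in[N]$, each of order at least $w_{\ref*{cor:smallorder}}(1,m'\omega,\ell)$; the size hypotheses $c_{\ref*{lem:omega-avoiding-cycle}}$ and $r_{\ref*{lem:omega-avoiding-cycle}}$ will be set large enough to permit this. By the contrapositive of Corollary~\ref{cor:smallorder} applied in $W_{i,j}$ with $q=1$ and $t=m'\omega$, each $W_{i,j}$ contains a cycle $C_{i,j}$ whose $\gamma_i$-value has order strictly greater than $m'\omega$ in $\Gamma_i$. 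Attach each $C_{i,j}$ to $O_0$ via two vertex-disjoint wall-paths $Q_1^{(i,j)},Q_2^{(i,j)}\subseteq W^*$ ending at nails of $W$, in such a way that the $2m'N$ attachment paths, the gadget subwalls, and the swapped-out (handle-free) segments of $O_0$ are all pairwise vertex-disjoint; this is exactly where the polynomial blow-up in $c_{\ref*{lem:omega-avoiding-cycle}}$ and $r_{\ref*{lem:omega-avoiding-cycle}}$ is spent, and is a standard Menger-style linkage argument. Each gadget then offers a binary choice between the two wall-path reroutings of $O_0$ corresponding to the two arcs of $C_{i,j}$ between its attachment nails, and by choosing the attachment nails suitably along $C_{i,j}$ we force the $\pi_i$-coordinate of the per-gadget $\gamma$-difference to have order $>m'\omega$ in $\Gamma_i$, while its $\pi_x$-coordinate vanishes for every $x\in Z$ (both arcs and both attachment paths are wall-paths between nails).

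Applying Corollary~\ref{cor:vectorsum} to $\prod_{i\in\overline{Z}}\Gamma_i$ with $h:=\pi_{\overline{Z}}(\gamma(O_0))$ now finishes the proof: for each $i\in\overline{Z}$ the $\pi_i$-difference sequence $(\pi_i(a_{i,j}-b_{i,j}))_{j\in[N]}$ has length $N=m'\omega+1$ with every entry of order $>m'\omega$, so Lemma~\ref{lem:smallgoodset} makes it good, and the corollary produces a choice of one option per gadget yielding a cycle $O\subseteq W\cup\bigcup\mathcal{P}$ with $\gamma_x(O)\notin\Omega_x$ for every $x\in\overline{Z}$; combined with the $Z$-preservation from Stage~1 this gives $\gamma_i(O)\notin\Omega_i$ for every $i\in[m]$. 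The main obstacle is the geometric bookkeeping of Stage~2 — routing $m'N$ attachments simultaneously disjointly, selecting attachment nails on each $C_{i,j}$ so that the arc-swap difference really does have large order in the designated coordinate, and verifying that toggling distinct gadgets yields strictly additive contributions to $\gamma(O_0)$; once this is set up, the group-theoretic ingredients (Corollaries~\ref{cor:smallorder} and~\ref{cor:vectorsum}, Lemma~\ref{lem:smallgoodset}) reduce the final selection to routine.
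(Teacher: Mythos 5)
Your two-stage architecture --- threading the handles into a base cycle and then rerouting through gadget cycles inside disjoint subwalls, finishing with Lemma~\ref{lem:smallgoodset} and Corollary~\ref{cor:vectorsum} --- mirrors the paper's proof (which uses Lemma~\ref{lem:pairlink}, Corollary~\ref{cor:cycle+subwalls}, Lemma~\ref{lem:reroutingpath}, Corollary~\ref{cor:smallorder}, and Corollary~\ref{cor:vectorsum}). However, Stage~2 has a genuine gap. You claim that because $W_{i,j}$ has no $\gamma_i$-bipartite $(\ell,\ell)$-subwall, the contrapositive of Corollary~\ref{cor:smallorder} with $q=1$ and $t=m'\omega$ produces a cycle $C_{i,j}$ with $\gamma_i(C_{i,j})$ of order strictly greater than $m'\omega$. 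This does not follow. The contrapositive only says that for \emph{each individual} cyclic subgroup $\Lambda=\gen{g}$ with $\ord(g)\le m'\omega$, some cycle of $W_{i,j}$ has $\gamma_i$-value outside $\Lambda$; the cycle depends on $\Lambda$, and the set of all elements of order at most $m'\omega$ is not in general a cyclic subgroup. Worse, the conclusion you want can be outright false: if $\Gamma_i=\mathbb{Z}_2$ (or any elementary abelian $2$-group) and $m'\omega\ge 2$, no element of $\Gamma_i$ has order greater than $m'\omega$, yet $W_{i,j}$ can perfectly well contain $\gamma_i$-non-zero cycles. The same difficulty persists after choosing attachment nails: the per-gadget difference always lives in $\Gamma_i$, so you cannot ``force'' it to have order exceeding the exponent of $\Gamma_i$.

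The paper circumvents this by replacing ``every difference has large order'' with the weaker condition that Lemma~\ref{lem:smallgoodset} actually requires, namely that \emph{no small-order difference is repeated}. This is achieved by a recursive quotient construction: maintain a set $S_{i,j}$ of the small-order (at most $m\omega$) differences seen so far, and at step $j$ apply Corollary~\ref{cor:smallorder} in the quotient $\Gamma_i/\gen{S_{i,j}}$ with $q=t=m\omega$ (not $q=1$) to find, via Lemma~\ref{lem:reroutingpath}, a rerouting whose $\gamma_i$-difference is not in $\gen{S_{i,j}}$ and hence is not a repeat of any previously recorded small-order difference. Any such difference that itself has small order is then added to $S_{i,j+1}$. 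This bookkeeping is exactly what makes the projected difference sequence good; without it the appeal to Lemma~\ref{lem:smallgoodset} does not go through. Your Stage~1 and your use of Corollary~\ref{cor:vectorsum} at the end are sound, but Stage~2 needs to be reworked along these lines.
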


We begin by linking up a set of handles of a wall into a cycle. 

\begin{lemma}
    \label{lem:pairlink}
    Let~$t$ be a positive integer and 
    let~$W$ be a ${(c,r)}$-wall in a graph~$G$ with~${r \geq 3}$ and~${c \geq \max\{3,t+1\}}$. 
    For every set~$\mathcal{P}$ of at most~$t$ vertex-disjoint $W$-handles in~$G$, 
    there is a cycle~$O$ in~${W \cup \bigcup \mathcal{P}}$ that contains~${\bigcup \mathcal{P}}$ as a subgraph. 
\end{lemma}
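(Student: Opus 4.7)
I propose to construct the cycle $O$ directly by routing pairwise vertex-disjoint paths in $W$ that link the handles of $\mathcal{P}$ into a single cycle. Since every $W$-handle has its endvertices on $C_1^W \cup C_c^W$ by definition, the $2k$ endvertices of the handles in $\mathcal{P}$ inherit a linear order on each of $C_1^W$ and $C_c^W$, and hence a cyclic order on the boundary cycle $B := C_1^W \cup R_1^W \cup C_c^W \cup R_r^W$ of $W$, which is a cycle because $c, r \geq 3$. I would view the handles as chords of $B$ with endvertices on the two vertical sides, and aim to link them up using connecting paths.

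As a first step, I would attempt the construction using only subarcs of $B$ between consecutive handle endvertices as connecting paths. There are exactly two ways to pair the $2k$ endvertices into $k$ such subarcs (the two alternating pairings of consecutive endvertices along $B$). Each pairing, combined with the handles, yields a $2$-regular subgraph of $W \cup \bigcup \mathcal{P}$ containing $\bigcup \mathcal{P}$; if one of them happens to be connected, it is the required cycle $O$.

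When neither alternating pairing produces a connected graph, the handles-plus-arcs decompose into several vertex-disjoint cycles, each containing at least one handle. The plan is then to iteratively \emph{merge} two such cycles into one by rerouting a single connecting arc through the interior of $W$. Concretely, pick two cycles $O^{(1)}, O^{(2)}$ from the decomposition, choose a subarc $\alpha \subseteq B$ that appears in $O^{(1)}$, and replace $\alpha$ by a detour through some unused interior column $C_i^W$ (with $i \in \{2, \ldots, c-1\}$) together with subpaths of $R_1^W$ and $R_r^W$ that meets a vertex of $O^{(2)}$; this merges $O^{(1)}$ and $O^{(2)}$ into a single cycle. Each merge reduces the number of cycles by one and consumes one fresh interior column, and since $c \geq t+1 \geq k+1$ gives $c - 2 \geq k - 1$ interior columns while at most $k - 1$ merges are ever needed, the column budget suffices.

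The main obstacle will be ensuring that the detours introduced during successive merges remain pairwise vertex-disjoint and do not conflict with the subarcs of $B$ already in use. I would handle this by dedicating distinct interior columns to distinct merges and carefully allocating disjoint subpaths of $R_1^W$ and $R_r^W$ to each detour: the two endpoints of the chosen interior column $C_i^W$ lie on $R_1^W$ and $R_r^W$ respectively, and the portions of these rows used by different detours can be separated because the interior columns occupy different horizontal positions. The hypothesis $c \geq \max\{3, t+1\}$ is precisely what makes this accounting go through, matching the requirement $c \geq 3$ in the base case $k = 0$ (where the boundary cycle $B$ itself is the desired cycle) and providing an additional column per handle for the merging procedure.
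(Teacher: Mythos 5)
Your approach is genuinely different from the paper's. The paper inducts on $t$: for $t>2$, by pigeonhole at least two handles have endvertices in the first column of $W$; it joins those two handles through a subpath of the first column to form a single composite path, takes its row-extension to the $(c-1)$-column-slice $W'$ obtained by deleting the first column, and applies the induction hypothesis to $W'$ with $t-1$ handles (the base case $t\le 2$ follows since $W\cup\bigcup\mathcal P$ is $2$-connected, so two given edges lie on a common cycle, and the interior of each handle is forced along once one of its edges is). That local ``merge two handles into one handle'' step is what makes the bookkeeping trivial. You instead attempt a global construction: boundary cycle $B$, alternating arc pairings, and iterative merges of the resulting $2$-regular components.

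The gap is in the merge step, which is the heart of your argument and is only sketched. First, replacing an arc $\alpha\subseteq B$ of $O^{(1)}$ by a detour that ``meets a vertex of $O^{(2)}$'' does not merge them into a single cycle: if the detour touches $O^{(2)}$ at one vertex $w$, the union is two cycles sharing $w$ (a figure-eight), with $w$ of degree $4$; and if the detour enters $O^{(2)}$ at $w_1$ and leaves at $w_2$, one must traverse the subarc of $O^{(2)}$ from $w_1$ to $w_2$ that contains \emph{all} of $O^{(2)}$'s handles — which you have no control over without pinning down where $w_1,w_2$ land. Second, the detours consume subpaths of $R_1^W$ and $R_r^W$, but the alternating arcs of the pairing already occupy parts of these rows (and of $C_1^W,C_c^W$), so disjointness of the detour from the arcs of other surviving cycles, from previously used detours, and from the handles themselves is a substantial constraint that is asserted but not established. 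Third, the decomposition may contain a cycle confined entirely to $C_1^W\cup\bigcup\mathcal P$ (all its handles with both endvertices on $C_1^W$), and a detour through an interior column and the extreme rows does not obviously reach it without conflicting with the arcs along $C_1^W$. These are the points the paper's induction sidesteps entirely by merging handles rather than cycles.
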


\begin{proof}
    Let~$T$ be the set of endvertices of all paths in~$\mathcal{P}$. 
    We proceed by induction on~$t$. 
    If~${\abs{\mathcal{P}} \leq 2}$, then 
    as~${W \cup \bigcup \mathcal{P}}$ is $2$-connected, ${W \cup \bigcup \mathcal{P}}$ has a cycle~$O$ containing at least one edge from every path in~$\mathcal{P}$. 
    Therefore, we may assume that~${\abs{\mathcal{P}} = t > 2}$. 
    
    By symmetry, we may assume that the first column of~$W$ meets at least two paths in~$\cP$. 
    In the first column of~$W$, choose two degree-$2$ nails~$v_1$,~$v_2$ that are endvertices of distinct paths~$P_1$,~$P_2$ of~$\cP$ respectively
    such that the distance between~$v_1$ and~$v_2$ in the first column of~$W$ is minimised.
    Let~$Q$ be the path from~$v_1$ to~$v_2$ in the first column of~$W$.
    Let~$P^\ast$ be the path~${P_1 \cup Q \cup P_2}$. 
    Let~$W'$ be the ${(c-1)}$-column-slice of~$W$ obtained by removing the first column. 
    Let~$\mathcal{P}'$ be the row-extension of~${(\cP \setminus \{P_1, P_2\}) \cup \{ P^\ast \}}$ to~$W'$. 
    Since~$\mathcal{P}'$ is a set of~${t-1}$ vertex-disjoint $W'$-handles, 
    by the induction hypothesis, there is a cycle~$O$ in~${W' \cup \bigcup \mathcal{P}'}$ such that~${ \bigcup \mathcal{P}' \subseteq O}$. 
    This completes the proof because~${\bigcup \mathcal{P} \subseteq \bigcup \mathcal{P}'}$ and~${W' \cup \bigcup \mathcal{P}' \subseteq W \cup \bigcup \mathcal{P}}$.
\end{proof}

In order to obtain a 
cycle whose $\gamma_i$-value is not in~$\Omega_i$ for every~${i \in [m]}$, 
it will be useful to have access to a sequence of subwalls to reroute segments of the cycle constructed by the previous lemma. 
The following straightforward corollary provides this. 

\begin{corollary}
    \label{cor:cycle+subwalls}
    There exist functions~${c_{\ref*{cor:cycle+subwalls}}, r_{\ref*{cor:cycle+subwalls}} \colon \mathbb{N}^3 \rightarrow \mathbb{N}}$ satisfying the following.
    Let~$t$,~$k$ and~$w$ be positive integers with~${w \geq 4}$.
    Let~$G$ be a graph containing a $(c,r)$-wall~$W$ with~${c \geq c_{\ref*{cor:cycle+subwalls}}(t,k,w)}$ and~${r \geq r_{\ref*{cor:cycle+subwalls}}(t,k,w)}$. 
    For every set~$\mathcal{P}$ of at most~$t$ vertex-disjoint $W$-handles in~$G$,
    there exist a cycle~$O$ in~${W \cup \bigcup \mathcal{P}}$ and a set~${\{ W_i \colon i \in [k] \}}$ of~$k$ vertex-disjoint $N^W$-anchored ${(w,w)}$-subwalls of~$W$ such that~${\bigcup \mathcal{P} \subseteq O}$ 
    and~${W_i \cap O = R_1^{W_i}}$ for all~${i \in [k]}$. 
\end{corollary}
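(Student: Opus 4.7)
The plan is to establish the corollary by a virtual handle substitution into Lemma~\ref{lem:pairlink}. I choose the dimension functions so that $W$ admits wide left and right column margins of width at least $\max(3, t+2)$, a tall top region, and a bottom strip large enough to hold $k$ disjoint $N^W$-anchored $(w,w)$-subwalls; for concreteness, $c_{\ref*{cor:cycle+subwalls}}(t,k,w) := 2\max(3, t+2) + k(2w+2)$ and $r_{\ref*{cor:cycle+subwalls}}(t,k,w) := w + 3$. I then place $W_1, \ldots, W_k$ as pairwise vertex-disjoint $N^W$-anchored $(w,w)$-subwalls occupying rows $[y, r]$ for $y := r - w + 1$, each in a distinct block of $2w$ consecutive columns separated by at least two columns of $W$, with the first and last $\max(3, t+2)$ columns of $W$ free of every $W_i$; the existence of such a placement follows from Remark~\ref{rmk:nicesubwall} and the dimensions.

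The first rows $R_1^{W_i}$ then lie as disjoint subpaths of row $R_y$ of $W$. Let $B$ be the subpath of $R_y$ from the leftmost vertex of $R_1^{W_1}$ to the rightmost vertex of $R_1^{W_k}$; then $B$ contains every $R_1^{W_i}$ and meets $V(W_j)$ exactly in $R_1^{W_j}$ for each $j$. I pick two degree-$2$ nails $p$ and $q$ on the first and last columns of $W$ and introduce a new path $H'$ with endpoints $p, q$ whose interior is disjoint from $V(W) \cup V(\bigcup \mathcal{P})$; then $H'$ is a $W$-handle.

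I apply Lemma~\ref{lem:pairlink} to $W$ with the handle set $\widetilde{\mathcal{P}} := \mathcal{P} \cup \{H'\}$ to obtain a cycle $O'$ in $W \cup \bigcup \widetilde{\mathcal{P}}$ containing $\bigcup \widetilde{\mathcal{P}}$. Since $H'$ is used by $O'$ as a single subpath, deleting $E(H')$ from $O'$ leaves a path $P^\ast$ from $p$ to $q$ in $W \cup \bigcup \mathcal{P}$. An inspection of the proof of Lemma~\ref{lem:pairlink} shows that $P^\ast \cap W$ is contained in the first and last $|\widetilde{\mathcal{P}}|$ columns of $W$ together with short row-extension segments inside those narrow strips. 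Using this, I construct a path $H$ from $p$ to $q$ in $W$ of the form $L \cup B \cup R$, where $L$ is routed from $p$ through the top region and the column just left of $W_1$ down to the left endpoint of $B$, and $R$ symmetrically for $q$ and the right endpoint of $B$; by the column margins and the placement of $W_i$'s, $H$ is internally vertex-disjoint from $P^\ast$ and from every $W_j$ except along $R_1^{W_j}$. Setting $O := P^\ast \cup H$, this is a cycle in $W \cup \bigcup \mathcal{P}$ containing $\bigcup \mathcal{P}$ (inside $P^\ast$) and $R_1^{W_i}$ for every $i$ (inside $B \subseteq H$), and satisfying $W_i \cap O = R_1^{W_i}$ for every $i$ by the routing choices.

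The main obstacle is the virtual-handle substitution: introducing the external stand-in $H'$ to invoke Lemma~\ref{lem:pairlink} and then choosing the internal path $H$ to be vertex-disjoint both from the path $P^\ast$ produced by the lemma and from the interiors of the $W_j$'s. The freedom to make this choice comes from the generous column margins, which keep $P^\ast \cap W$ confined near the first and last columns, and from the tall top region, which allows $L$ and $R$ to be routed into the middle of $W$ without ever touching $P^\ast$.
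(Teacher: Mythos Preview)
Your argument has a genuine gap at the step where you invoke ``an inspection of the proof of Lemma~\ref{lem:pairlink}'' to conclude that $P^\ast \cap W$ lies in the first and last $|\widetilde{\mathcal{P}}|$ columns of~$W$. That inspection does not yield this. The inductive step of Lemma~\ref{lem:pairlink} does peel off one boundary column at a time and row-extend, but the base case ($|\mathcal{P}| \leq 2$) is handled by a bare $2$-connectedness argument: ``$W \cup \bigcup \mathcal{P}$ has a cycle containing at least one edge from every path in $\mathcal{P}$''. That cycle is not constructed and may use arbitrary columns and rows of the remaining wall, including exactly the middle columns where you placed your subwalls~$W_i$. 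So you have no guarantee that $P^\ast$ avoids the interiors of the~$W_i$, and hence no guarantee that $W_i \cap O = R_1^{W_i}$. Even if you reproved Lemma~\ref{lem:pairlink} with a controlled base case, your row bound $r = w+3$ leaves only three rows of ``top region''; since the endpoints of the handles in~$\mathcal{P}$ can sit in any row, the routing of $L$ and $R$ through that strip while avoiding~$P^\ast$ is not justified.

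The paper's proof avoids both problems by never trying to control where the cycle from Lemma~\ref{lem:pairlink} goes. Instead it splits $W$ column-wise into a $(c-kw)$-column-slice $W'$ and a $kw$-column-slice $W''$, row-extends~$\mathcal{P}$ to $W'$-handles~$\mathcal{P}'$, and applies Lemma~\ref{lem:pairlink} only to~$W'$; the resulting cycle then lies in $W' \cup \bigcup \mathcal{P}'$ automatically, whatever the lemma does internally. The subwalls are packed inside a $w$-row-slice of~$W''$ found by pigeonhole (this is why the row bound is $(2t+1)(w-2)+1$, not $w+3$): the at most $2t$ rows carrying the row-extensions leave a gap of $w-1$ consecutive clean rows, so the cycle meets each~$W_i$ only along its first row. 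If you want to salvage your approach, you would need to reprove Lemma~\ref{lem:pairlink} with an explicit bound on which columns the cycle uses, and to enlarge the row dimension accordingly; at that point the paper's column-splitting argument is both shorter and does not require opening up the lemma.
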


\begin{proof}
    Define
    \[
        {c_{\ref*{cor:cycle+subwalls}}(t,k,w) :=  kw+t+1, } \ 
        \textnormal{ and } \ 
        {r_{\ref*{cor:cycle+subwalls}}(t,k,w) := (2t+1)(w-2) + 1}.
    \]
    The case~${\mathcal{P} = \emptyset}$ is easy to verify, so we may assume~${\abs{\mathcal{P}} = t > 0}$. 
    Without loss of generality, we may assume that the last column of~$W$ intersects~$\bigcup \mathcal{P}$. 
    Let~$W''$ be a ${kw}$-column-slice of~$W$ containing the last column of~$W$
    and let~$W'$ be a ${(c-kw)}$-column-slice of~$W$ disjoint with~$W''$.
    Let~$\mathcal{P}'$ denote the row-extension of~$\mathcal{P}$ to~$W'$ in~$W$. 
    
    By the pigeonhole principle, there is a ${(w-1)}$-row-slice of~$W''$ which is disjoint from~${\bigcup \mathcal{P}'}$. 
    Hence there is a $w$-row-slice~$S$ of~$W''$ such that~${S \cap \bigcup \mathcal{P}' = R_1^{S}}$. 
    We can pack~$k$ vertex-disjoint $N^W$-anchored ${(w,w)}$-subwalls~${\{ W_i \colon i \in [k]\}}$ in~$S$ so that~${W_i \cap \bigcup \mathcal{P}' = R_1^{W_i}}$. 
    Applying Lemma~\ref{lem:pairlink} to~$W'$ and~$\mathcal{P}'$ yields the desired cycle. 
\end{proof}

Moreover, we need the following variation of Lemma~\ref{lem:non-zero-path-in-cycle}. 

\begin{lemma}
    \label{lem:reroutingpath}
    Let~$\Gamma$ be an abelian group, and let~${(G,\gamma)}$ be a $\Gamma$-labelled graph,
    let~$O$ be a $\gamma$-non-zero cycle in~$G$, 
    and let~$P$ be a path disjoint from~$O$.
    If~$G$ contains three vertex-disjoint ${(V(P),V(O))}$-paths~$P_1$, $P_2$, $P_3$, 
    then there is a path~$P'$ in~${P \cup O \cup P_1 \cup P_2 \cup P_3}$ with the same endvertices as~$P$ such that~${\gamma(P') \neq \gamma(P)}$.
\end{lemma}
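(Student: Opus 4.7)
The plan is to run the same averaging argument that underlies Lemma~\ref{lem:non-zero-path-in-cycle}. I would construct six natural reroutings of $P$ through $O$, each a $(u,v)$-path with the same endpoints as $P$, and then show that a signed sum of their $\gamma$-values differs from $6\gamma(P)$ by exactly $\gamma(O)$; since $\gamma(O)\neq 0$, at least one of the six reroutings must have $\gamma$-value distinct from $\gamma(P)$ and hence serves as the required $P'$.

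First I would reduce to the case where each $P_i$ meets $V(P)$ only at an endpoint $p_i$ and meets $V(O)$ only at an endpoint $q_i$, by passing to subpaths. The vertex-disjointness of $P_1,P_2,P_3$ then gives three distinct $p_i$ on $P$ and three distinct $q_i$ on $O$. Denoting the endpoints of $P$ by $u$ and $v$, I would write $A_{jk}$ for the arc of $O$ from $q_j$ to $q_k$ that avoids $q_i$, where $\{i,j,k\}=\{1,2,3\}$, and $P^{jk}$ for the $(p_j,p_k)$-subpath of $P$. For each unordered pair $\{j,k\}\subseteq[3]$, I obtain two candidate reroutings by excising $P^{jk}$ from $P$ and replacing it either with $P_j\cup A_{jk}\cup P_k$ (the short detour) or with $P_j\cup(A_{ji}\cup A_{ik})\cup P_k$ (the long detour through $q_i$).

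The decisive calculation is then
\[
\gamma(\mathrm{long}_{jk}) - \gamma(\mathrm{short}_{jk}) = \gamma(A_{ji})+\gamma(A_{ik})-\gamma(A_{jk}) = \gamma(O) - 2\gamma(A_{jk}),
\]
and summing over all three pairs yields $3\gamma(O)-2(\gamma(A_{12})+\gamma(A_{13})+\gamma(A_{23})) = \gamma(O) \neq 0$. If every one of the six rerouted paths had $\gamma$-value equal to $\gamma(P)$, all three of these differences would vanish and the total would be $0$, contradicting $\gamma(O)\neq 0$. Hence at least one of the six candidates has $\gamma$-value different from $\gamma(P)$, and it furnishes $P'$.

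The one place where care is required is verifying that each candidate is a \emph{simple} path rather than a walk revisiting a vertex — in particular that the long detour, which traverses $q_i$, does not accidentally meet the retained portions of $P$, and that those retained portions meet $P_j$ or $P_k$ only at $p_j$ or $p_k$. All such checks reduce immediately to the standing hypotheses that $V(P)\cap V(O)=\emptyset$, that $P_1,P_2,P_3$ are pairwise vertex-disjoint, and that the $P_i$ are internally disjoint from $V(P)\cup V(O)$; together with handling the boundary cases where some $p_j$ coincides with $u$ or $v$, this is bookkeeping rather than a genuine obstacle.
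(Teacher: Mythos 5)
Your proof is correct and is essentially the same argument the paper gives: the paper constructs the short and long detour for one pair $\{j,k\}$ (chosen so that $\gamma(Q_i)\neq\gamma(Q_j)+\gamma(Q_k)$, which the paper deduces from $\gamma(O)\neq 2\gamma(O)$), while you consider all six detours at once and sum the long-minus-short differences over the three pairs to obtain $\gamma(O)\neq 0$ --- the same key inequality packaged slightly differently. Your preliminary reduction is actually unnecessary, since a $(V(P),V(O))$-path by definition already has no internal vertices in $V(P)\cup V(O)$, but otherwise the bookkeeping you describe goes through as claimed.
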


\begin{proof}
    Let~${T := V(O) \cap (V(P_1)\cup V(P_2)\cup V(P_3))}$.
    Since~${\abs{T} = 3}$, the cycle~$O$ contains three distinct $T$-paths~$Q_1$, $Q_2$, $Q_3$ such that~${V(Q_i) \cap V(P_i) = \emptyset}$ for each~${i \in [3]}$. 
    Since~${\gamma(O) \neq 0}$, we have that~${\gamma(O) \neq 2\gamma(O)}$, which implies 
    \[
        \gamma(Q_1) + \gamma(Q_2) + \gamma(Q_3) \neq (\gamma(Q_2) + \gamma(Q_3)) + (\gamma(Q_3) + \gamma(Q_1)) + (\gamma(Q_1) + \gamma(Q_2)).
    \]
    Without loss of generality, we may assume~${\gamma(Q_3) \neq \gamma(Q_1) + \gamma(Q_2)}$. 
    Observe that there are paths~$P'$ and~$P''$ in~${P \cup O \cup P_1\cup P_2}$ with the same endvertices as~$P$ such that~${E(P') \setminus E(P'') = E(Q_3)}$ and~${E(P'') \setminus E(P') = E(Q_1) \cup E(Q_2)}$. 
    Hence, $P'$ or~$P''$ is the desired path.
\end{proof}

Finally, we can prove Lemma~\ref{lem:omega-avoiding-cycle}. 

\begin{proof}[Proof of Lemma~\ref{lem:omega-avoiding-cycle}]
    For convenience, let~${w_0 := w_{\ref*{cor:smallorder}}(m\omega,m\omega,\ell)}$. 
    We define 
    \begin{align*}
        c_{\ref*{lem:omega-avoiding-cycle}}(t,\ell,m,\omega) &:= 
        \max\{ 3, t+1, c_{\ref*{cor:cycle+subwalls}}(t, \, m(m\omega+1), \, w_0 + 1)\},\\
        r_{\ref*{lem:omega-avoiding-cycle}}(t,\ell,m,\omega) &:= 
        \max\{3, r_{\ref*{cor:cycle+subwalls}}(t, \, m(m\omega+1), \, w_0 + 1)\}.
    \end{align*}
    If~${Z = [m]}$, then the result follows from Lemma~\ref{lem:pairlink}. 
    Hence, without loss of generality, we may assume that~${Z = [m] \setminus [y]}$ for some~${y \in [m]}$. 
    By Corollary~\ref{cor:cycle+subwalls}, 
    there exist a cycle~$O$ in~${W \cup \bigcup \mathcal{P}}$ 
    and a set~${\{ W_{i,j} \colon i\in [y],\, j \in  [y\omega+1] \}}$ of~$y(y\omega+1)$ vertex-disjoint $N^W$-anchored $(w_0+1,w_0+1)$-subwalls of~$W$  
    such that~${\bigcup \mathcal{P} \subseteq O}$ 
    and~${W_{i,j} \cap O = R_1^{W_{i,j}} =: P_{i,j}}$ for 
    all~${i \in [y]}$ and~${j \in [y\omega+1]}$. 
    For~${i \in [y]}$ and~${j \in [y\omega+1]}$, 
    let~$W'_{i,j}$ be a ${w_0}$-row-slice of~$W_{i,j}$ disjoint from~$O$,
    and let~$H$ be the graph obtained from~$O$ by deleting the internal vertices of the paths~$P_{i,j}$ for 
    all~${i \in [y]}$ and~${j \in [y\omega+1]}$. 
    
    For each~${i \in [y]}$, we now recursively define a family~${\big(Q_{i,j} \colon j \in [y\omega+1]\big)}$ of paths 
    and a family  
    ${(S_{i,j} \colon j \in [y\omega+1])}$ of subsets of~$\Gamma_i$, 
    such that for all~${j \in [y\omega+1]}$ and~${g \in S_{i,j}}$,
    \begin{itemize}
        \item ${\abs{S_{i,j}} \leq j-1}$, and
        \item the order of~$g$ is at most~${m \omega}$.
    \end{itemize}  
    We first set~${S_{i,1} := \emptyset}$. 
    Now, for~${j \in [y\omega+1]}$, let~$\lambda_j$ be the induced~${\Gamma_i/\gen{S_{i,j}}}$-labelling of~$G$. 
    Note that since~${i \notin Z}$ and~$W$ is ${(\gamma,Z,\ell)}$-clean, $W$ has no  $\gamma_i$-bipartite ${(\ell,\ell)}$-subwall, and in particular, each~${W_{i,j}'}$ does not have such a subwall.
    As ${\abs{S_{i,j}} \leq m\omega}$ and each element of~$S_{i,j}$ has order at most~${m \omega}$, 
    Corollary~\ref{cor:smallorder} implies that there is a $\lambda_j$-non-zero cycle~$O_{i,j}$ in~$W'_{i,j}$. 
    By Lemma~\ref{lem:reroutingpath}, 
    there is a path~$Q_{i,j}$ in~${W_{i,j}}$ with the same endvertices as~$P_{i,j}$ such that~${\lambda_j(P_{i,j}) \neq \lambda_j(Q_{i,j})}$, since there are three vertex-disjoint ${(V(P_{i,j}),V(O_{i,j}))}$-paths in~$W_{i,j}$. 
    We set 
    \[
        S_{i,j+1} :=
        \begin{cases}
            S_{i,j} & \textnormal{ if } {\gamma_i(Q_{i,j}) - \gamma_i(P_{i,j})} \textnormal{ has order at least } {m\omega+1}, \textnormal{ and } \\
            S_{i,j} \cup \big\{\gamma_i(Q_{i,j}) - \gamma_i(P_{i,j})\big\} & \textnormal{ otherwise.}
        \end{cases}
    \]
    
    For~${i \in [y]}$, let~${D_i := \big(\gamma(Q_{i,j}) - \gamma(P_{i,j}) \colon j \in [y\omega+1]\big)}$. 
    By construction of the sets~${S_{i,j}}$, the projection of~${D_i}$ to~${\Gamma_i}$ contains no repeated elements of order at most~${m \omega}$ in~${\Gamma_i}$, and thus
    this sequence is good by Lemma~\ref{lem:smallgoodset}. 
    Hence, by Corollary~\ref{cor:vectorsum} with~${h := \gamma(H)}$, there exists~${X_{i,j} \in \{ P_{i,j}, Q_{i,j} \}}$ for all~${i \in [y]}$ and~${j \in [y\omega+1]}$
    such that for the cycle~${O' := H \cup \bigcup\big\{ X_{i,j} \colon i \in [y],\, j \in [y\omega+1] \big\}}$, we have that~${\gamma_i(O') \notin \Omega_i}$ for all~${i \in [y]}$. 
    By construction, ${\gamma_j(O') = \gamma_j(O) \notin \Omega_j}$ for all~${j \in Z}$ because~$W$ is ${(\gamma,Z,\ell)}$-clean.
\end{proof}

\section{Proof of the main theorem}
\label{sec:main}

We now complete the proof of Theorem~\ref{thm:main}, which we are restating for the convenience of the reader. 

\mainthm*

\begin{proof}
    Let~${\Gamma := \prod_{i \in [m]} \Gamma_i}$ denote the product of the~$m$ given abelian groups, 
    and let~$\gamma \colon E(G) \to \Gamma$ denote the $\Gamma$-labelling for which~${\gamma_i(e) = \pi_i (\gamma(e))}$ for all~${e \in E(G)}$. 
    We proceed by induction on~$k$. 
    For~${k \leq 2}$, we may trivially set~${f_{m,\omega}(k) = 1}$. 
    Now suppose that~${k > 2}$ and that there is some integer~${f_{m,\omega}(k-1)}$ as per the theorem. 
    For every subgraph~$H$ of~$G$, let~${\nu(H)}$ denote the maximum size of a set of cycles~$O$ in~$H$ with~${\gamma_i(O) \notin \Omega_i}$ for all~${i \in [m]}$ such that no three cycles in the set share a common vertex. 
    Observe that~$\nu$ is a packing function for~$G$. 
    We will show that if~$\tau_\nu(G)$ is sufficiently large relative to~$k$, then~${\nu(G) \geq k}$. 
    This will complete the proof of the theorem.
    
    For non-negative integers~$p$ and~$z_0$ with~${z_0 \leq m}$, let~${\alpha(p,z_0)}$ and~${\rho(z_0)}$ be recursively defined as follows. 
    For every non-negative integer~$p$, we define
    \begin{align*}
        \alpha(p,0) &:= k2^{m\omega}+m\omega+1, &
        \rho(0) &:= m + f_{\ref*{lem:ramsey}}(\alpha(1,0),m), 
    \intertext{and for~${z_0 > 0}$ we recursively define}
        \alpha(p,z_0) &:= 4^{\max\{\rho(z_0-1)-p,0\}} \alpha(1,z_0-1),&
        \rho(z_0) &:= m + f_{\ref*{lem:ramsey}}(\alpha(1,z_0),m). 
    \end{align*}
    Let~${\hat{p} := \rho(m-1)}$. 
    Note that we may assume that~$f_{\ref*{lem:ramsey}}$ is increasing in its first argument, and hence~${\rho(z_0) \leq \hat{p}}$ for~${z_0 \leq m-1}$. 
    Let 
    \[
        u := \max\{ \lceil  f_{m,\omega}(k-1)/3\rceil, 
        f_{\ref*{thm:tpath}} ( f_{\ref*{lem:addlinkage}} ( f_{\ref*{lem:ramsey}}(\alpha(1,m),m) ) ) + 3 \}.
    \]
    
    We recursively define~${\beta(p,z_0,z)}$ for non-negative integers~$p$, $z_0$, and~$z$ with~${z_0 \leq z \leq m}$ and~${p \leq \hat{p}}$, 
    as well as~${\psi(z)}$ for a non-negative integer~$z$ with~${z \leq m+1}$ as follows. 
    We define 
    \begin{align*}
        \psi(m+1) &:= 3, 
        \intertext{and for~${z \leq m}$ we define} 
        \beta(p,z_0,z) &:=
        \begin{cases}
            \max \big\{ 
                u,\, 
                k c_{\ref*{lem:omega-avoiding-cycle}}(2^{m\omega}\hat{p}, \psi(z+1)+2, m, \omega) \big\}
            & \text{if } z_0 = 0,\\
            \beta(1,z_0-1,z) 
            & \text{if } z_0 > 0 \text{ and } p = \hat{p},\\
            \max\big\{
                \beta(p+1,z_0,z),\, w_{\ref*{lem:addlinkage}}(\alpha(p+1,z_0),
                \beta(p+1,z_0,z)) \big\} 
            & \text{if } z_0 > 0 \text{ and } p < \hat{p}; 
        \end{cases}\\
        \psi(z) &:= \max \big\{ 
            \psi(z+1),\, 
            \beta(0,z,z),\, 
            r_{\ref*{lem:omega-avoiding-cycle}}(2^{m\omega}\hat{p},\, 
            \psi(z+1)+2, m, \omega) \big\}.
    \end{align*}
    
    Observe that~${\beta(p,z_0,z) \geq u}$. 

    Lastly, we define~${f_{m,\omega}(k) := \max\big\{
        6f_{\ref*{thm:wall}}(\psi(0)+2),\,  
        6u, \,
        12f_{m,\omega}(k-1) 
        \big\}}$.
    
    \medskip
    
    Let~$T$ be a minimum $\nu$-hitting set of size~${t := \tau_\nu(G)}$, 
    and assume that~${t > f_{m,\omega}(k)}$. 
    By the induction hypothesis,~$G$ has a half-integral packing of~${k-1}$ cycles in~$\mathcal{O}$ 
    and therefore we may assume 
    for a contradiction 
    that~${\nu(G) = k-1}$. 
    For each subgraph~$H$ of~$G$, if~${\nu(H) < \nu(G)}$, then 
    by the induction hypothesis,~${\tau_\nu(H) \leq f_{m,\omega}(k-1) \leq f_{m,\omega}(k)/12 < t/12}$. 
    Lemma~\ref{lem:welllinked} yields that the set~$\mathcal{T}_T$ of all separations~${(A,B)}$ of~$G$ of order less than~${t/6}$ with~${\abs{B \cap T} > 5t/6}$ is a tangle of order~${\lceil t/6 \rceil > f_{\ref*{thm:wall}}(\psi(0)+2)}$. 
    By Theorem~\ref{thm:wall}, there is a ${(\psi(0)+2,\psi(0)+2)}$-wall in~$G$ dominated by~$\mathcal{T}_T$. 
    By Lemma~\ref{lem:cleansubwall}, 
    this wall has a  ${(\psi(\abs{Z}),\psi(\abs{Z}))}$-subwall~$W$ which is ${(\gamma',Z,\psi(\abs{Z}+1)+2)}$-clean
    for some subset~${Z \subseteq [m]}$ and some $\Gamma$-labelling~$\gamma'$ of~$G$  shifting-equivalent to~$\gamma$.
    By Lemma~\ref{lem:dominatedsubwall}, the wall~$W$ is 
    dominated by~$\mathcal{T}_T$. 
    Since~${\gamma(O) = \gamma'(O)}$ for every cycle~$O$ in~$G$, we may assume without loss of generality that~${\gamma = \gamma'}$. 
    
    \begin{claim}
        \label{clm:manyhandles}
        There exist integers~$c$ and~$p$ with~${c \geq \beta(1,0,z)}$ and~${0 \leq p \leq \hat{p}}$, 
        a $c$-column-slice~$W'$ of~$W$, 
        a family~${\mathfrak{P} = ( \mathcal{P}_i \colon i \in [p])}$ of non-empty sets of $W'$-handles, 
        and a family~${( Z_i \colon i \in \{0\} \cup [p] )}$ of disjoint subsets of~$Z$ such that 
        \begin{enumerate}
            [label=\textnormal{(\alph*)}, series=manyhandles]
            \item \label{item:manyhandles1} 
                if~${P \in \mathcal{P}_i}$ and~${Q \in \mathcal{P}_j}$ are not vertex-disjoint for some~${i,j \in [p]}$, then~${i = j}$ and~${P = Q}$, 
            \item \label{item:manyhandles2} 
                ${\bigcup_{i\in \{0\}\cup [p]}Z_i = Z}$, 
            \item \label{item:manyhandles2b}
                ${\abs{\mathcal{P}_i} \geq \alpha(p,\abs{Z_0})}$ for all~${i \in [p]}$, 
            \item \label{item:manyhandles3} 
                ${\abs{\gamma_j(\mathcal{P}_{i})} = \abs{\mathcal{P}_i}}$ for all~${i \in [p]}$ and~${j \in Z_i}$, 
            \item \label{item:manyhandles4} 
                ${\abs{\gamma_j(\mathcal{P}_{i})} = 1}$ for all~${i \in [p]}$ and ${j \in Z_{0}}$, 
            \item \label{item:manyhandles5} 
                there is some~${g \in \gen{\bigcup_{i \in [p]} \gamma(\mathcal{P}_{i})}}$ such that~${\pi_j(g) \notin \Omega_j}$ for all~${j \in Z_0}$. 
        \end{enumerate} 
    \end{claim}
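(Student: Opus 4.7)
My plan is to construct the families $\mathcal{P}_1, \mathcal{P}_2, \ldots$ iteratively, inserting one at a time. Initialise with $p = 0$, $Z_0 = Z$, no handles, and a column-slice $W' := W$. At each iteration, first test whether condition~\ref{item:manyhandles5} already holds; if so, stop. Otherwise, produce a new family $\mathcal{P}_{p+1}$ of $\alpha(p+1,\abs{Z_0})$ vertex-disjoint handles, extract homogeneity from them via Ramsey, and update the partition. The total number of iterations will be bounded by $\hat{p}$.

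To produce $\mathcal{P}_{p+1}$, consider the subgroup $\Lambda_p \leq \Gamma$ generated by $\bigcup_{i \in [p]} \gamma(\mathcal{P}_i)$ together with $\prod_{j \in [m] \setminus Z_0} \Gamma_j$. The enlargement is harmless since condition~\ref{item:manyhandles5} only concerns the $Z_0$-coordinates. Because~\ref{item:manyhandles5} fails, every element of $\Lambda_p$ maps into $\Omega_j$ for some $j \in Z_0$, and hence any cycle whose $\gamma$-value avoids $\Omega_j$ in every $Z_0$-coordinate must lie outside $\Lambda_p$. Use this to define a packing function $\nu'$ on $G$ counting cycles that are $(\gamma/\Lambda_p)$-non-zero. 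The hypotheses of Lemma~\ref{lem:cover} are then met using the tangle $\mathcal{T}_T$, which dominates $W'$ by Lemma~\ref{lem:dominatedsubwall}, together with the choice of $u$. Lemma~\ref{lem:cover} yields $f_{\ref*{lem:addlinkage}}(\alpha(p+1,\abs{Z_0}))$ vertex-disjoint $(\gamma/\Lambda_p)$-non-zero $\branch(W')$-paths; Lemma~\ref{lem:addlinkage} then row-extends them to $\alpha(p+1,\abs{Z_0})$ vertex-disjoint handles in a narrower $c$-column-slice $W''$ with $c \geq \beta(p+1,\abs{Z_0},\abs{Z})$, simultaneously row-extending all previously built families to $W''$ so that~\ref{item:manyhandles1} is maintained.

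Next apply Lemma~\ref{lem:ramsey} to the sequence of $\gamma$-values of the new handles, with argument set $[m]$, to extract a subsequence of size $\alpha(p+1,\abs{Z_0})$ on which, for each coordinate $j \in [m]$, the $\gamma_j$-values of the handles are either all equal or pairwise distinct. Let $Z_{p+1}$ be the subset of $Z_0$ on which they are pairwise distinct. The ``furthermore'' clause of Lemma~\ref{lem:ramsey}, applied with the subset $Z_0$, forces $Z_{p+1} \neq \emptyset$: after first discarding any duplicate $\gamma$-values modulo $\Lambda_p$ (a negligible loss), the remaining handles have $\gamma$-values pairwise inequivalent modulo $\Lambda_p$, and by construction of $\Lambda_p$ any such inequivalence witnesses a difference in some $Z_0$-coordinate. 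Setting $Z_0 \leftarrow Z_0 \setminus Z_{p+1}$ and leaving the other sets unchanged preserves~\ref{item:manyhandles1}--\ref{item:manyhandles4} with $p+1$ in place of $p$.

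The main obstacle is the parameter bookkeeping. The values $\alpha(p,z_0)$ and $\beta(p,z_0,z)$ are engineered so that the losses from Lemmas~\ref{lem:cover},~\ref{lem:addlinkage}, and~\ref{lem:ramsey} are absorbed at every iteration, while the row-extensions of the preceding families fit inside the progressively narrower column-slices. Termination within $\hat{p}$ iterations follows because $\abs{Z_0}$ can decrease strictly at most $m$ times, and the recursion $\alpha(p,z_0) = 4^{\max\{\rho(z_0-1)-p,0\}} \alpha(1,z_0-1)$ forces, at fixed $\abs{Z_0} = z_0$, a drop by a factor of $4$ in the required handle count per step until $\alpha$ bottoms out at $\alpha(1,z_0-1)$; beyond $\rho(z_0)$ steps at the same level of $\abs{Z_0}$ the process can no longer sustain itself without either condition~\ref{item:manyhandles5} being satisfied or $\abs{Z_0}$ strictly decreasing.
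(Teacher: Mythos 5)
The iterative framework you describe — quotienting by a subgroup encoding the current partial solution, invoking Lemma~\ref{lem:cover} and Lemma~\ref{lem:addlinkage} to produce a fresh set of non-zero handles in a narrower column-slice, then extracting Ramsey-homogeneity — is exactly the machinery the paper uses, and your construction of $\Lambda_p$ and the observation leading to the hypotheses of Lemma~\ref{lem:cover} are sound. The genuine gap is in your termination argument, specifically the sentence ``after first discarding any duplicate $\gamma$-values modulo $\Lambda_p$ (a negligible loss), the remaining handles have $\gamma$-values pairwise inequivalent modulo $\Lambda_p$.'' Lemma~\ref{lem:addlinkage} (via Theorem~\ref{thm:tpath}) only guarantees that each new handle is $\lambda$-non-zero; it gives no control over how the $\lambda$-values are distributed across the handles. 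In the extreme case every one of the new handles could carry the same non-zero coset of $\Lambda_p$, in which case ``discarding duplicates'' annihilates the entire set, not a negligible fraction. Consequently the ``furthermore'' clause of Lemma~\ref{lem:ramsey} need not apply, the set you call $Z_{p+1}$ can be empty, and your claim that $\abs{Z_0}$ strictly decreases every iteration (or even eventually) is unjustified. Your concluding paragraph on bookkeeping gestures at why $\alpha$ and $\beta$ would absorb losses, but does not repair this: the issue is not the size of $\mathcal{P}_{p+1}$ but whether the coordinate split makes progress.

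The paper circumvents exactly this difficulty with an extra layer of structure that your proposal omits. It defines a $(c,q,p)$-``McGuffin'' by adding two invariants to conditions~\ref{item:manyhandles1}--\ref{item:manyhandles4}: a count~$q$ of how many $\mathcal{P}_i$ have $Z_i \neq \emptyset$ (the others have $Z_i = \emptyset$), and a ``cross-separation'' property saying that for any two $\mathcal{P}_i, \mathcal{P}_{i'}$ with empty $Z$-part, some coordinate in $Z_0$ already separates $\gamma_j(\mathcal{P}_i)$ from $\gamma_j(\mathcal{P}_{i'})$ disjointly. It then takes $(q,p)$ lexicographically maximal. An iteration where $Z' = \emptyset$ still makes progress in the lexicographic order ($p$ increases while $q$ stays fixed), with the cross-separation invariant recording that the new family is coordinate-separated in $Z_0$ from all previous $Z$-empty families. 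When $p$ hits $\hat{p}$, a second, structurally different Ramsey application — to a transversal of representatives, one from each $Z$-empty $\mathcal{P}_i$, which by the cross-separation invariant are pairwise distinguished in $Z_0$ — forces a nonempty split, increasing $q$. That two-case analysis and the invariant tracking it are the missing ingredients; without them the claim that $\abs{Z_0}$ decreases within $\hat{p}$ steps does not go through.
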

    
    \begin{proofofclaim}
        For non-negative integers~$c$,~$q$, and~$p$, 
        we say that a triple ${(W',\mathfrak{P}, \mathcal{Z})}$ consisting of a wall~$W'$, a family $\mathfrak{P} := (\mathcal{P}_i \colon i \in [p])$ of non-empty sets of $W'$-handles, and a family $\mathcal{Z} := ( Z_i \colon i \in \{0\} \cup [p])$ of disjoint subsets of~$Z$ 
        is a \emph{${(c,q,p)}$-McGuffin} if~$W'$ is a $c$-column-slice of~$W$, 
        and~$W'$,~$\mathfrak{P}$, and~$\mathcal{Z}$ satisfy~\ref{item:manyhandles1}--\ref{item:manyhandles4}, as well as
        the following two conditions:
        \begin{enumerate}
            [resume*=manyhandles]
            \item \label{item:manyhandles7}
                ${Z_i\neq\emptyset}$ for~${i \in [q]}$ and~${Z_i=\emptyset}$ for~${i \in [p] \setminus[q]}$,  
            \item \label{item:manyhandles8}
                for all distinct~${i, i' \in [p] \setminus [q]}$ there is ${j \in Z_{0}}$ such that~${\gamma_j(\mathcal{P}_{i}) \cap \gamma_j(\mathcal{P}_{i'}) = \emptyset}$.
        \end{enumerate}
        Note that~${(W,\emptyset,(Z))}$ is a ${(\psi(\abs{Z}),0,0)}$-McGuffin. 
        Let~${(q,p)}$ be a lexicographically maximal pair of non-negative integers 
        with~${q \leq \abs{Z}}$ and~${q \leq p \leq \hat{p}}$ 
        for which there is a ${(c,q,p)}$-McGuffin~${(W',\mathfrak{P},\mathcal{Z})}$ for some~${c \geq \beta(p,\abs{Z_0},\abs{Z})}$. 
        Now suppose for a contradiction that~${(W',\mathfrak{P},\mathcal{Z})}$ does not satisfy~\ref{item:manyhandles5}. 
        Then~$Z_0$ is nonempty. 
        We distinguish two cases. 
        
        If~${p = \hat{p}}$, then ${p - q \geq \hat{p} - m \geq \rho(z_0) - m \geq  f_{\ref*{lem:ramsey}}(\alpha(q+1,\abs{Z_0}-1),m)}$ since~${q \leq m}$ by~\ref{item:manyhandles7} and~$\alpha$ is decreasing in its first argument. 
        Let~$\mathcal{P}''$ be a set of~${p-q}$ vertex-disjoint $W'$-handles containing exactly one element of~$\mathcal{P}_i$ for each~${i \in [p] \setminus [q]}$.
        Such a set~$\mathcal{P}''$ exists by~\ref{item:manyhandles1}. 
        For~${i \in [q]}$, let~${\mathcal{P}'_i := \mathcal{P}_i}$ and~${Z'_i := Z_i}$. 
        Observe that by~\ref{item:manyhandles8}, 
        for all distinct paths~$P$ and~$Q$ in~$\mathcal{P}''$, there exists ${j \in Z_0}$ such that~${\gamma_j(P) \neq \gamma_j(Q)}$.
        Thus, by Lemma~\ref{lem:ramsey}, there is a subset $\mathcal{P}'_{q+1}$ of $\mathcal{P}''$ 
        with~${\abs{\mathcal{P}'_{q+1}} = \alpha(q+1,\abs{Z_0}-1)}$
        such that for each~${i \in [m]}$, either 
        \begin{itemize}
            \item ${\gamma_i(P) = \gamma_i(Q)}$ for all~${P,Q \in \mathcal{P}'_{q+1}}$, or
            \item ${\gamma_i(P) \neq \gamma_i(Q)}$ for all distinct~${P,Q \in \mathcal{P}'_{q+1}}$,
        \end{itemize}
        and the second condition holds for some~${i \in Z_0}$. 
        Let 
        \[
            Z'_{q+1} := 
            \{ i \in Z_0 \colon \gamma_i(P) \neq \gamma_i(Q) \text{ for all distinct } P, Q \in \mathcal{P}'_{q+1} \}
            \text{ and } 
            Z'_0 := Z_0 \setminus Z'_{q+1}. 
        \]
        Let~${\mathfrak{P}' := (\mathcal{P}_i' \colon i \in [q+1])}$ and~${\mathcal{Z}' := (Z_i' \colon i \in \{0\} \cup [q+1])}$. 
        Then~${(W',\mathfrak{P}', \mathcal{Z}')}$ is a ${(c,q+1,q+1)}$-McGuffin, since~\ref{item:manyhandles2b} follows from the fact that~${\abs{\mathcal{P}'_{q+1}} \geq \alpha(q+1,\abs{Z_0}-1) \geq \alpha(q+1,\abs{Z_0'})}$, and the remaining conditions (\ref{item:manyhandles1}, \ref{item:manyhandles2}, \ref{item:manyhandles3}, \ref{item:manyhandles4}, \ref{item:manyhandles7}, and~\ref{item:manyhandles8}) are easy to check. 
        This contradicts the maximality of~${(q,p)}$.
        
        So we may assume that~${p < \hat{p}}$. 
        Let~$\Lambda$ be the subgroup of~$\Gamma$ consisting of all~${g \in \Gamma}$ 
        for which there is ${g' \in \gen{\bigcup_{i \in [p]} \gamma(\mathcal{P}_i)}}$ such that for all~${j \in Z_{0}}$ we have~${\pi_j(g) = \pi_j(g')}$. 
        Let~$\lambda$ be the induced ${\Gamma/\Lambda}$-labelling of~$G$. 
        Note that by the negation of~\ref{item:manyhandles5}, 
        neither ${\gen{\bigcup_{i \in [p]} \gamma(\mathcal{P}_i)}}$ nor~$\Lambda$ contains an element~$g$ such that~${\pi_j(g) \notin \Omega_j}$ for all~${j \in Z_0}$. 
        Therefore, 
        \begin{enumerate}
            [label=(\arabic*)]
            \item\label{eq:key} every cycle~$O$ of~$G$ for which~${\gamma_i(O) \notin \Omega_i}$ for all~${i \in [m]}$ is $\lambda$-non-zero. 
        \end{enumerate}
        
        Note that~$W'$ is a subwall of~$W$ of order~${c \geq u}$. 
        For any~${S \subseteq V(G)}$ of size at most~${u-1}$, there is a component~$X$ of~${G-S}$ containing a row of~$W'$. 
        By Lemma~\ref{lem:dominatedsubwall},~$\mathcal{T}_T$ dominates~$W'$, so the separation~${(V(G) \setminus V(X), S \cup V(X))}$ is in~$\mathcal{T}_T$, so~$X$ contains a vertex of~$\branch(W')$ and at least~${5t/6 - (u-1) > 5f_{m,\omega}(k)/6-(u-1) > 4u}$ vertices of~$T$. 
        By~\ref{eq:key}, every minimal subgraph~$H$ with~${\nu(H) \geq 1}$ is a $\lambda$-non-zero cycle. 
        Moreover, if~$H$ is a subgraph of~$G$ with~${\nu(H) < \nu(G) = k-1}$, then 
        by the induction hypothesis,~${\tau_\nu(H) \leq f_{m,w}(k-1) \leq 3u}$. 
        Hence, by Lemma~\ref{lem:cover}, we have that~$G$ contains a set of~$f_{\ref*{lem:addlinkage}} ( f_{\ref*{lem:ramsey}}(\alpha(1,m),m) )$ disjoint $\lambda$-non-zero~$\branch(W')$-paths. 
        We may assume that function~$w_{\ref*{lem:addlinkage}}$ is increasing in both of its arguments. 
        As~${\abs{Z_0} > 0}$ and~${p < \hat{p}}$, we have
        \begin{align*}
            c \geq \beta(p, \abs{Z_0}, \abs{Z}) 
            &\geq w_{\ref*{lem:addlinkage}}(\alpha(p+1,\abs{Z_0}),\beta(p+1,\abs{Z_0},\abs{Z})).
        \end{align*}
        
        Thus, by Lemma~\ref{lem:addlinkage} applied to~$W'$, there exist a $c'$-column-slice~$W''$ of~$W'$ 
        for some 
        \[
            c' \geq \beta(p+1,\abs{Z_0},\abs{Z}) \geq \beta(q+1,\abs{Z_0}-1,\abs{Z})
        \] 
        and a set~$\mathcal{P}'_i$ of~${f_{\ref*{lem:ramsey}}(\alpha(p+1,\abs{Z_0}),m)}$ vertex-disjoint $W''$-handles for each~${i \in [p+1]}$ such that 
        \begin{itemize}
            \item for each~${i \in [p]}$, the set~$\mathcal{P}'_i$ is a subset of the row-extension of~$\mathcal{P}_i$ to~$W''$ in~$W'$, 
            \item the paths in~${\bigcup_{i \in [p+1]} \mathcal{P}'_i}$ are vertex-disjoint, 
            \item the paths in~$\mathcal{P}'_{p+1}$ are $\lambda$-non-zero. 
        \end{itemize}

        By Lemma~\ref{lem:ramsey}, there exist a subset~${\mathcal{R}}$ of~${\mathcal{P}'_{p+1}}$ and a subset~$Z'$ of~$Z_0$ such that
        \begin{itemize}
            \item ${\abs{\gamma_j(\mathcal{R})} = \abs{\mathcal{R}}}$ for all~${j \in Z'}$,
            \item ${\abs{\gamma_j(\mathcal{R})} = 1}$ for all~${j \in Z_{0} \setminus Z'}$, and
            \item ${\abs{\mathcal{R}} \geq \alpha(p+1,\abs{Z_0}) \geq \alpha(q+1,\abs{Z_0}-1)}$.
        \end{itemize}
        Let~${p'' := p+1}$ if~$Z'$ is empty and let~${p'' := q+1}$ if~$Z'$ is non-empty, and 
        For~${i \in \{0\} \cup [p'']}$, let 
        \[
            Z''_i := 
            \begin{cases}
                Z_0 \setminus Z' & \text{if } i = 0,\\
                Z_i & \text{if } i \in [p''-1],\\
                Z' & \text{if } i = p''.\\
            \end{cases}
        \]
        For~${i \in [p''-1]}$, let~${\mathcal{P}_i'' := \mathcal{P}_i'}$ 
        and let~${\mathcal{P}_{p''} := \mathcal{R}}$. 

        We now show that~${\big( W'', (\mathcal{P}''_i \colon i \in [p'']), (Z''_i \colon i \in \{0\} \cup [p'']) \big)}$ is either a ${(c',q,p+1)}$-McGuffin or a ${(c', q+1, q+1)}$-McGuffin, contradicting the maximality of~${(q,p)}$. 
        First, observe that 
        since~$W$ is ${(\gamma',Z,\psi(\abs{Z}+1)+2)}$-clean, 
        every $N^W$-path is $\gamma_i$-zero for all~${i \in Z}$ and therefore
        if~$P'$ is the row-extension of~$P$ to~$W''$ in~$W'$, 
        then~${\gamma_i(P') = \gamma_i(P)}$ for all~${i \in Z}$, 
        implying~\ref{item:manyhandles3} and~\ref{item:manyhandles4} for~${i < p''}$. 
        By the definition of~$Z'$, properties~\ref{item:manyhandles3} and~\ref{item:manyhandles4} hold for~${i = p''}$.
        It remains to check~\ref{item:manyhandles8} when~$Z'$ is empty, ${q < i \leq p}$, and~${i' = p'' = p+1}$. 
        This is implied by the property that 
        the paths in~$\mathcal{P}'_{p+1}$ are $\lambda$-non-zero. 
    \end{proofofclaim}
    
    \begin{claim}
        There is a family~${(\mathcal{Q}_i \colon i \in [k])}$ of~$k$ disjoint subsets of~$\bigcup_{i \in [p]}\mathcal{P}_i$, 
        each of size at most $2^{\abs{Z_0}\omega}p$, 
        such that for each~${i \in [k]}$ and each~${j \in Z}$, 
        we have~${\gamma_j \big( \bigcup \mathcal{Q}_i \big) \notin \Omega_j}$.
    \end{claim}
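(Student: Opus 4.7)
The plan is to fix in advance multiplicities $d_1, \ldots, d_p \in [2^{\abs{Z_0}\omega}]$ that handle the coordinates $j \in Z_0$, and then to construct each $\mathcal{Q}_i$ by selecting $d_\ell$ paths from the unused portion of $\mathcal{P}_\ell$ uniformly at random; a probabilistic argument will then handle the remaining coordinates $j \in Z \setminus Z_0$.

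By \ref{item:manyhandles4}, for each $\ell \in [p]$ and $j \in Z_0$ all paths in $\mathcal{P}_\ell$ share the same $\gamma_j$-value, which I denote by $a_{j,\ell}$. Writing the element $g$ from \ref{item:manyhandles5} as a $\mathbb{Z}$-linear combination $g = \sum_P n_P \gamma(P)$ over $P \in \bigcup_\ell \mathcal{P}_\ell$ and projecting to any coordinate $j \in Z_0$ gives $\pi_j(g) = \sum_{\ell \in [p]} c_\ell a_{j,\ell}$ with $c_\ell := \sum_{P \in \mathcal{P}_\ell} n_P$, so these integers satisfy $\sum_\ell c_\ell a_{j,\ell} \notin \Omega_j$ for every $j \in Z_0$. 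Applying Corollary~\ref{cor:omega-avoiding} (to the product $\prod_{j \in Z_0} \Gamma_j$) then provides integers $d_\ell \in [2^{\abs{Z_0}\omega}]$ such that $\sum_\ell d_\ell a_{j,\ell} \notin \Omega_j$ for all $j \in Z_0$.

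Next I would construct the $\mathcal{Q}_i$'s inductively. Supposing $\mathcal{Q}_1, \ldots, \mathcal{Q}_{i-1}$ have already been constructed, set $\mathcal{P}_\ell^{(i)} := \mathcal{P}_\ell \setminus \bigcup_{i' < i} \mathcal{Q}_{i'}$, fix an arbitrary $T_\ell \subseteq \mathcal{P}_\ell^{(i)}$ of size $d_\ell - 1$, let $R_\ell := \mathcal{P}_\ell^{(i)} \setminus T_\ell$, sample $e_\ell$ uniformly and independently from $R_\ell$ for each $\ell \in [p]$, and set $\mathcal{Q}_i := \bigcup_\ell (T_\ell \cup \{e_\ell\})$. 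By construction the sets $\mathcal{Q}_i$ are pairwise disjoint subsets of $\bigcup_\ell \mathcal{P}_\ell$ with $\abs{\mathcal{Q}_i} = \sum_\ell d_\ell \leq p \cdot 2^{\abs{Z_0}\omega}$, and by \ref{item:manyhandles4} we get $\gamma_j(\bigcup \mathcal{Q}_i) = \sum_\ell d_\ell a_{j,\ell} \notin \Omega_j$ for every $j \in Z_0$, regardless of the random choices.

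For each $j \in Z \setminus Z_0$ let $\ell(j) \in [p]$ be the unique index with $j \in Z_{\ell(j)}$. Conditioning on $(e_{\ell'})_{\ell' \neq \ell(j)}$, we have $\gamma_j(\bigcup \mathcal{Q}_i) = C + \gamma_j(e_{\ell(j)})$ for some constant $C$, and by \ref{item:manyhandles3} the values $\{\gamma_j(e) : e \in R_{\ell(j)}\}$ are pairwise distinct, so at most $\omega$ outcomes of $e_{\ell(j)}$ place $\gamma_j(\bigcup \mathcal{Q}_i)$ into $\Omega_j$. A union bound over the at most $m$ coordinates in $Z \setminus Z_0$ bounds the total failure probability by $m\omega / \min_\ell \abs{R_\ell}$. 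Since at most $k \cdot 2^{\abs{Z_0}\omega}$ paths are ever removed from any given $\mathcal{P}_\ell$, we have $\abs{R_\ell} \geq \alpha(p,\abs{Z_0}) - k \cdot 2^{\abs{Z_0}\omega} + 1 \geq m\omega + 1$, where the last inequality follows from $\alpha(p,\abs{Z_0}) \geq \alpha(1,0) = k \cdot 2^{m\omega} + m\omega + 1$ and $\abs{Z_0} \leq m$. Hence the failure probability is strictly less than $1$, so a valid realisation of $\mathcal{Q}_i$ exists, completing the induction. The main conceptual step is the clean separation of the $Z_0$ constraints (controlled through multiplicities via Corollary~\ref{cor:omega-avoiding}) from the $Z \setminus Z_0$ constraints (controlled through specific path choices via the distinctness property of \ref{item:manyhandles3}).
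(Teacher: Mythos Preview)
Your proof is correct and follows essentially the same approach as the paper's. Both arguments fix multiplicities $d_\ell \in [2^{\abs{Z_0}\omega}]$ via Corollary~\ref{cor:omega-avoiding} to handle the $Z_0$ coordinates, then inductively build each $\mathcal{Q}_i$ by selecting $d_\ell$ unused paths from each $\mathcal{P}_\ell$, with one path per index chosen carefully to control the coordinates $j \in Z \setminus Z_0$ using the injectivity from~\ref{item:manyhandles3}; the paper packages this last step as an application of Lemma~\ref{lem:vectorsum} (whose proof is exactly the probabilistic argument you wrote out), and restricts the careful choice to indices $\ell$ with $Z_\ell \neq \emptyset$, but these are cosmetic differences.
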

    
    \begin{proofofclaim}
        Recursively, for each~${i \in [k]}$ we define~$\mathcal{Q}_i$ 
        containing at most~$2^{\abs{Z_0}\omega}$ elements of~$\mathcal{P}_j$ for all~${j \in [p]}$. 
        For each~${i \in [k]}$ and~${j \in [p]}$, let~${\mathcal{X}_{i,j} := \mathcal{P}_j \cap \bigcup_{i' \in [i-1]} \mathcal{Q}_{i'}}$, and note that we have~${\mathcal{X}_{1,j} = \emptyset}$ and~${\abs{\mathcal{X}_{i,j}} \leq (i-1) 2^{\abs{Z_0}w} \leq (k-1) 2^{mw}}$. 

        For each~${j \in [p]}$, select~${g_j \in \gamma(\mathcal{P}_j)}$ arbitrarily. 
        By Claim~\ref{clm:manyhandles}\ref{item:manyhandles4} and~\ref{item:manyhandles5}, 
        for each~${j \in [p]}$ there exists an integer~$c_j$ such that~${\pi_x \big( \sum_{j \in [p]} c_jg_j \big) \notin \Omega_x}$ for all~${x \in Z_0}$. 
        Hence, by Corollary~\ref{cor:omega-avoiding}, for each~${j \in [p]}$ there exists an integer~${d_j \in \big[ 2^{\abs{Z_0}\omega} \big]}$ 
        such that for all~${x \in Z_0}$, 
        we have~${\pi_x \big( \sum_{j \in [p]} d_jg_j \big) \notin \Omega_x}$. 
        Let~$I$ be the set of indices~${j \in [p]}$ such that~${Z_j \neq \emptyset}$. 
        Now~${\abs{\mathcal{P}_j} \geq \alpha(p,\abs{Z_0}) \geq \alpha(p,0) \geq k2^{mw} \geq \abs{\mathcal{X}_{i,j}} + d_j}$ for all~${j \in [p]}$ by Claim~\ref{clm:manyhandles}\ref{item:manyhandles2b}. 
        Hence, for each~${j \in [p]}$, we can select a set~$\mathcal{Y}_j$ of distinct $W'$-handles in~${\mathcal{P}_j \setminus \mathcal{X}_{i,j}}$ of size~$d_j -1$ if~${j \in I}$ and of size~$d_j$ otherwise. 
        By the definition of~$\mathcal{X}_{i,j}$ and Claim~\ref{clm:manyhandles}\ref{item:manyhandles2b}, 
        there are at least~${\alpha(p,\abs{Z_0}) - k2^{\abs{Z_0}\omega} \geq m\omega+1}$ distinct $W'$-handles in~${\mathcal{P}_j \setminus (\mathcal{X}_i \cup \mathcal{Y}_j)}$ for every~${j \in [p]}$. 
        Define 
        \[
            h := \sum_{j \in [p]} \sum_{P \in \mathcal{Y}_j} \gamma(P),
        \]
        and for~${j \in I}$ define~${S_j := \{ \gamma(P) \colon P \in \mathcal{P}_j \setminus (\mathcal{X}_{i,j} \cup \mathcal{Y}_j) \}}$. 
        By Claim~\ref{clm:manyhandles}\ref{item:manyhandles3}, for all~${j \in I}$, for all distinct~$g$,~$g'$ in~$S_j$, and for all~${x \in Z_j}$, 
        we have~${\pi_x(g) \neq \pi_x(g')}$
        and so~${\abs{S_j} > mw}$.
        Now by Lemma~\ref{lem:vectorsum}, 
        there is a family~${(Q_j \colon j \in I)}$ of paths 
        such that~${Q_j \in \mathcal{P}_j \setminus (\mathcal{X}_{i,j} \cup \mathcal{Y}_j)}$ 
        for all~${j \in I}$, 
        and~${\pi_x \big( h + \sum_{j \in I} \gamma(Q_j) \big) \notin \Omega_x}$ for all~${x \in Z}$. 
        Hence, let~${\mathcal{Q}_i := \{Q_j \colon j \in I \} \cup \bigcup_{j \in [p]} \mathcal{Y}_j}$, 
        and note that~${\abs{\mathcal{Q}_i} \leq \sum_{j \in [p]} d_j \leq 2^{\abs{Z_0}w}p}$.
    \end{proofofclaim}

    We now complete the proof of the theorem. 
    Since~$W'$ has at least~${\beta(1,0,\abs{Z})}$ columns, 
    there is a set~${\{W_i \colon i \in [k]\}}$ of~$k$ vertex-disjoint 
    ${c_{\ref*{lem:omega-avoiding-cycle}}(2^{m\omega}\hat{p},\psi(\abs{Z}+1)+2, m, \omega)}$-column-slices of~$W'$. 
    Note that  the number of rows of~$W'$ is at least~${\psi(\abs{Z}) \geq r_{\ref*{lem:omega-avoiding-cycle}}(2^{m\omega}\hat{p},\psi(\abs{Z}+1)+2, m, \omega)}$. 
    For each~${i \in [k]}$, let~$\mathcal{Q}^\ast_i$ be the row-extension of~$\mathcal{Q}_i$ to~$W_i$. 
    By Lemma~\ref{lem:omega-avoiding-cycle},
    for each~${i \in [k]}$ there is a cycle~$O_i$ in~${W_i \cup \bigcup \mathcal{Q}^\ast_i}$ with~${\gamma_j(O_i) \notin \Omega_j}$ for all~$j\in [m]$. 
    Observe that for every vertex~${v \in V(G)}$, there are at most two indices~${i \in [k]}$ such that~${v \in V(W_i \cup \bigcup \mathcal{Q}^\ast_i)}$. 
    Hence,~${\nu(G) \geq k}$, a contradiction. 
\end{proof}

\section{Conclusion}
\label{sec:conclusion}

In this work, we proved that a half-integral analogue of the Erd\H{o}s-P\'{o}sa theorem holds for 
cycles in graphs labelled with a bounded number of abelian groups, whose values avoid a bounded number of elements of each group. 
We conclude with some open problems. 

In the proof of our theorem, the theorem of Wollan~\cite{Wollan2010} about $\gamma$-non-zero $A$-paths was important. 
This theorem implies that an analogue of the Erd\H{o}s-P\'{o}sa theorem holds for the odd $A$-paths, and for $A$-paths intersecting a prescribed set of vertices.
Bruhn, Heinlein, and Joos~\cite{BruhnMJ2018} further showed that an analogue of the Erd\H{o}s-P\'{o}sa theorem holds for $A$-paths of length at least~$\ell$, and for $A$-paths of even length,
but interestingly, they also showed that for every composite integer~${m > 4}$ and every~${d \in \{0\} \cup [m-1]}$,
no such analogue holds for $A$-paths of length~$d$ modulo~$m$. 
Later, Thomas and Yoo~\cite{YooT20202} characterised the abelian groups~$\Gamma$ and elements~${\ell \in \Gamma}$ where an analogue of the Erd\H{o}s-P\'{o}sa theorem holds for $A$-paths of $\gamma$-value~$\ell$. 
We would like to ask whether a statement similar to Theorem~\ref{thm:main} holds for $A$-paths.

\begin{question}
    \label{ques1}
    For every pair of positive integers~$m$ and~$\omega$, does there exist a function~${g_{m,\omega} \colon \mathbb{N} \to \mathbb{N}}$ satisfying the following property?
    \begin{itemize}
        \item For each~${i \in [m]}$, let~$\Gamma_i$ be an abelian group and let~$\Omega_i$ be a subset of~$\Gamma_i$. 
        Let~$G$ be a graph, let~$A$ be a set of vertices in~$G$, and for each~${i \in [m]}$, 
        let~${\gamma_i}$ be a~${\Gamma_i}$-labelling of~$G$, 
        and let~$\mathcal{P}$ be the set of all $A$-paths of~$G$ whose $\gamma_i$-value is in~${\Gamma_i \setminus \Omega_i}$ for all~${i \in [m]}$. 
        If ${\abs{\Omega_i} \leq \omega}$ for all~${i \in [m]}$, then there exists 
        either a half-integral packing of~$k$ paths in~$\mathcal{P}$, 
        or a hitting set for~$\mathcal{P}$ of size at most~${g_{m,\omega}(k)}$. 
   \end{itemize}
\end{question}

Our next question relates to directed labellings of graphs.
Let~$\Gamma$ be a group (not necessarily abelian). 
A \emph{directed $\Gamma$-labelling} of a graph~$G$ is a function~${\gamma}$ from the set of oriented edges ${\vec{E}(G) := \{ (e, w) \colon e = uv \in E(G), w \in \{u,v\} \}}$ to~$\Gamma$ such that~${\gamma(e,u) = -\gamma(e,v)}$ for each edge~${e = uv}$. 
Given a walk~${W := v_t e_1 v_1 e_2 \cdots e_t v_t}$, we define~${\gamma(W) := \sum_{j = 1}^t \gamma(e_j, v_{j})}$, and say that~$W$ \emph{corresponds} to a cycle~$O$ if~${E(O) = E(W)}$ and~$v_t$ is the only repeated vertex of~$W$. 
It is straightforward to check that if any walk corresponding to a cycle~$O$ has value~$0$, then all walks corresponding to~$O$ do, so it makes sense to consider non-zero cycles with respect to a directed labelling. 
Note that if~$\Gamma$ is abelian and~$W_1$ and~$W_2$ are walks corresponding to the same cycle~$O$, then~${\gamma(O_1) = \pm \gamma(O_2)}$. 
If~$\Gamma$ is not abelian, then the choice of start vertex for the corresponding walk does matter as well. 
Hence, in this case, we are really considering cycles together with a specified start vertex and direction. 

Huynh, Joos, and Wollan~\cite{HuynhJW2017} conjectured that a half-integral analogue of the Erd\H{o}s-P\'{o}sa theorem holds for cycles which are non-zero with respect to a fixed number of directed labellings. 
We ask whether a statement similar to Theorem~\ref{thm:main} holds for directed labellings. 
If it does, then it would imply the conjecture of Huynh, Joos, and Wollan. 

\begin{question}
    \label{ques2}
    For every pair of positive integers~$m$ and~$\omega$, does there exist a function~${g_{m,\omega} \colon \mathbb{N} \to \mathbb{N}}$ satisfying the following property?
    \begin{itemize}
        \item For each~${i \in [m]}$, let $\Gamma_i$ be a group and let~$\Omega_i$ be a subset of~$\Gamma_i$. 
        Let~$G$ be a graph and for each~${i \in [m]}$, let~${\gamma_i}$ be a directed~${\Gamma_i}$-labelling of~$G$, and let~${\mathcal{O}}$ be the set of all cycles of~$G$ which have a corresponding walk $W$ such that $\gamma_i(W)$ is in~${\Gamma_i \setminus \Omega_i}$ for all~${i \in [m]}$. 
        If~${\abs{\Omega_i} \leq \omega}$ for all~${i \in [m]}$, then there exists 
        either a half-integral packing of~$k$ cycles in~$\mathcal{O}$, 
        or a hitting set for~$\mathcal{O}$ of size at most~${g_{m,\omega}(k)}$. 
    \end{itemize}
\end{question}

As discussed in Section~\ref{subsec:obstructionsEP}, an analogue of the Erd\H{o}s-P\'{o}sa theorem does not hold for the cycles described in Theorem~\ref{thm:main}.
It was shown that in graphs of sufficiently high connectivity~\cite{Thomassen2001, RautenbackR2001, KawarabayashiR2009, Joos2017}, 
an analogue of the Erd\H{o}s-P\'{o}sa theorem holds for odd cycles. 
We ask whether a similar phenomenon happens for the cycles described in Theorem~\ref{thm:main}. 

\begin{question}
    \label{ques3}
    For every pair of positive integers~$m$ and~$\omega$, do there exist functions~${g_{m,\omega} \colon \mathbb{N} \to \mathbb{N}}$ and~${g'_{m,\omega} \colon \mathbb{N} \to \mathbb{N}}$ satisfying the following property? 
    \begin{itemize}
        \item For each~${i \in [m]}$, let~$\Gamma_i$ be an abelian group and let~$\Omega_i$ be a subset of~$\Gamma_i$. 
        Let~$G$ be a graph, and for each~${i \in [m]}$, let~${\gamma_i}$ be a~${\Gamma_i}$-labelling of~$G$, 
        and let~${\mathcal{O}}$ be the set of all cycles of~$G$ whose $\gamma_i$-value is in~${\Gamma_i \setminus \Omega_i}$ for all~${i \in [m]}$.
        If~$G$ is ${g'_{m, \omega}(k)}$-connected and~${\abs{\Omega_i} \leq \omega}$ for all~${i \in [m]}$, then there exists 
        either a set of~$k$ vertex-disjoint cycles in~$\mathcal{O}$, 
        or a hitting set for~$\mathcal{O}$ of size at most~${g_{m,\omega}(k)}$. 
    \end{itemize}
\end{question}

Similar to Question~\ref{ques3}, we ask whether an analogue of the Erd\H{o}s-P\'{o}sa theorem holds for the $A$-paths described in Question~\ref{ques1}, 
and for the cycles described in Question~\ref{ques2}, 
in the case of highly connected graphs. 

\bibliographystyle{abbrv}
\bibliography{main}

\end{document}